\newtheorem{alg}{Algorithm}
\newtheorem{thm}{Theorem}
\newtheorem{defn}{Definition}
\newtheorem{lem}{Lemma}
\newtheorem{prop}{Proposition}
\newtheorem{remark}{Remark}
\newtheorem{problem}{Problem}
\newcommand{\unit}{\texttt{1}\!\!\texttt{l}}
\newcommand{\M}{\mathcal{M}}
\newcommand{\N}{\mathcal{N}}
\newcommand{\grad}{\nabla_\Gamma}
\newcommand{\gradh}{\nabla_{\Gamma_h}}
\def \D #1{\underline{D}_{#1}}
\let\gplgaddtomacro\g@addto@macro
\gdef\gplbacktext{}%
\gdef\gplfronttext{}%
\numberwithin{equation}{section}
\author{Hans Fritz\footnotemark[1]}
\title{On the computation of harmonic maps by unconstrained algorithms based on totally geodesic embeddings}
\date{}
\begin{document}
\maketitle
\renewcommand{\thefootnote}{\fnsymbol{footnote}}
\footnotetext[1]{Fakult\"at f\"ur Mathematik, Universit\"at Regensburg, 93040 Regensburg, Germany.\\ 
Hans.Fritz@ur.de}
\abstract{
In this paper, we present an algorithm for the computation of harmonic maps, and respectively, of the harmonic map heat flow between two closed Riemannian manifolds. 
Our approach is based on the totally geodesic embedding of the target manifold into $\mathbb{R}^N$. 
Since embeddings of Riemannian manifolds into Euclidean spaces can easily be made totally geodesic by extending the Riemannian metric in a certain way into some tubular neighbourhood,
the here presented approach is quite general. Totally geodesic embeddings allow to reformulate the harmonic map heat flow in a neighbourhood of the embedded target manifold.
This reformulation has the advantage that the problem becomes unconstrained: Instead of assuming a priori that the solution to the flow maps into the target manifold
this fact becomes a property of the solution to the extended flow for special initial data.
The solution space to the reformulated problem therefore exists of maps which are also allowed to map into the ambient space of the target manifold. 
This simplifies the discretization of the problem. 
Based on this observation, we here propose algorithms for the computation of the harmonic map heat flow and of harmonic maps.
In contrast to previous schemes, our algorithm does not make use of projections onto the target manifold, discrete tangential deformations, geodesic finite elements or of Lagrange multipliers.
We prove error estimates in the stationary case and present some numerical tests at the end of the paper.
}\\

\textbf{Key words.} Harmonic maps, harmonic map heat flow, totally geodesic embeddings, surface finite element method.
\\

\textbf{AMS subject classifications.} 53C43, 53C44, 58E20, 65D99, 65M60
\\
\section*{Introduction}
\label{section_introduction}
The harmonic map equation is one of the most fundamental PDEs in mathematics, since it generalizes Laplace's equation to mappings between Riemannian manifolds.
Since the fundamental work of Eells and Sampson in \cite{ES64}, the study of harmonic maps has therefore become an important field of research.
From an analytic point of view the harmonic map equation and respectively, the associated flow, which is called the harmonic map heat flow, are demanding because of the 
non-linearity of the PDE and the constraint that the solution has to map into the target manifold. This condition implies that the set of admissible maps is in general not a linear space. 
It is clear that the polynomial interpolation of points lying on an embedded target manifold in $\mathbb{R}^N$ is, in general, not contained in the target manifold.
Therefore, a discretization of the problem based on standard finite elements would violate the above constraint. 
This is the reason why the development of algorithms for the computation of harmonic maps becomes quite tricky.
We here list different approaches which have been established in recent years to tackle this problem.
\begin{enumerate}
\item[1.] The condition of mapping into the target manifold can be implemented by using non-standard finite elements, the so-called geodesic finite elements, see \cite{Sa12,Sa15}.
\item[2.] In the following two approaches the constraint is weakened to finitely many points.
\begin{itemize}
\item[a)] In \cite{CD03,St14}, the discrete harmonic map is only assumed to have values 
in the target manifold for all mesh vertices. The problem of computing discrete harmonic maps is formulated using Lagrange multipliers, which leads to saddle point problems.
\item[b)] In \cite{Ba08, Ba10, Ba15}, harmonic maps are computed by solving the harmonic map heat flow. Although different schemes are proposed, the rough idea is always to deform
		a discrete map with values in the target manifold for all mesh vertices into the \textit{tangent} direction. The condition of the deformation being 
		tangent to the target manifold is again only imposed in the mesh vertices. This idea can be coupled with a back projection of the nodal values 
		onto the target manifold after each time step, see \cite{Ba08, Ba15} for details.  
\end{itemize}
For spherical target manifolds, numerical methods based on renormalizations, that is replacing a solution $u_{n+1}^\ast$ by $u_{n+1} = u_{n+1}^\ast/|u_{n+1}^\ast|$, 
were already developed in the 80s and 90s, see e.g. \cite{Al97, Ba05, CHKL87}. A property of these approaches is that 
great care has to be taken in order to ensure that the Dirichlet energy still
decreases after the renormalization step. Another method for one- and two-dimensional spheres was introduced in \cite{COV04}. It relies on polar coordinates, that is 
computations are done in the parameter domain and the constraint is automatically satisfied by the parametrization. 
\end{enumerate}

\subsubsection*{Our approach}
None of these numerical methods mimic the proof of existence of solutions to the harmonic map heat flow presented by Hamilton in \cite{Ham75}.
This is a bit surprising, since this proof is based on a reformulation of the harmonic map heat flow, which seems to be advantageous for numerical purposes.
The idea of the proof is first to embed the target manifold into some Euclidean space. Due to the Nash embedding theorem this is always possible if the co-dimension is sufficiently high 
and the embedding can even be assumed to be isometric, which however is not crucial here. In the non-isometric case, a Whitney embedding would also be sufficient. 
In the second step, the harmonic map heat flow, which describes the evolution of a mapping between two Riemannian manifolds,
is reformulated as an evolution equation for a map with values in the ambient space of the target manifold.
By this means, the problem becomes unconstrained. A different theoretical method, where the original problem is replaced by an unconstrained problem,
can be found in \cite{CL95,St98}. There, the authors make use of a penalty method and consider the corresponding limit.
This approach, however, is totally different from the work in \cite{Ham75} and will not play any role in this paper.  
We will follow the ideas of Hamilton to develop numerical schemes for the computation of the harmonic map heat flow and of harmonic maps. 

For the sake of simplicity we first consider the harmonic map heat flow $f: \Gamma \times [0,T) \rightarrow  \M$
between two hypersurfaces $\Gamma \subset \mathbb{R}^{n+1}$ and $\M \subset \mathbb{R}^{n+1}$ of the same dimension.
It can be written in the following form
\begin{equation}
	\frac{\partial}{\partial t} f = \Delta_{\Gamma} f 
	+ \sum_{\alpha, \beta =1}^{n+1} (\mathcal{H}_{\alpha \beta} \nu)\circ f \nabla_{\Gamma} f^\alpha \cdot \nabla_{\Gamma} f^\beta.
	\label{HMF_embedded}
\end{equation}
Here, $\Delta_\Gamma$ and $\nabla_\Gamma$ denote the Laplace-Beltrami operator and respectively, the tangential gradient on $\Gamma$.
The vector field $\nu$ is supposed to be a unit normal field to $\M \subset \mathbb{R}^{n+1}$ and $\mathcal{H}$ is the associated extended Weingarten map
$\mathcal{H}= \nabla_{\M} \nu$. Using the orthogonal projection $a: \N_T \rightarrow \M$ onto the manifold $\M$, which is well-defined in 
a tubular neighbourhood $\N_T \subset \mathbb{R}^{n+1}$ of $\M$, a possible extension of the harmonic map heat flow would be the following problem:
Find $f_{ex}: \Gamma \times [0,T) \rightarrow \N_T$ such that
$$
	\frac{\partial}{\partial t} f_{ex} = \Delta_{\Gamma} f_{ex} + \sum_{\alpha, \beta =1}^{n+1} (\mathcal{H}_{\alpha \beta} \nu)\circ a \circ f_{ex} 
											\nabla_{\Gamma} f^\alpha_{ex} \cdot \nabla_{\Gamma} f^\beta_{ex},
$$ 
with $f_{ex}(\cdot, 0) = f_0(\cdot)$ for some suitable initial data $f_0: \Gamma \rightarrow \M$. A solution to the harmonic map heat flow (\ref{HMF_embedded}) would clearly also satisfy
this extended equation. However, the following argument shows that this extension is not suitable for a numerical method. Let $\Gamma$ and $\M$ be the round unit sphere
in $\mathbb{R}^{n+1}$. Then the identity map is a harmonic map between $\Gamma$ and $\M$. We now consider perturbations obtained by a uniform
scaling of the identity map by a factor of $r(t): [0,T) \rightarrow (0,\infty)$. A short calculation shows that the extended harmonic map heat flow leads to the following ODE
$$
	r'(t) = - n r(t) + n r(t)^2.
$$   
A solution $r(t)$ to this equation is monotonically increasing if $r(0) > 1$ and monotonically decreasing if $r(0) < 1$. We therefore expect that a numerical solution to the
above extension would be unstable under small perturbations. This shows that an extension of the harmonic map heat flow to a neighbourhood of the target manifold must be chosen very carefully.
However, as we will see below, there is a general method to find such a reformulation, which in the end will lead to nice algorithms. 
   
\subsubsection*{Related work}
Unconstrained numerical schemes for the harmonic map heat flow or in this case, more generally, for the $p$-harmonic map heat flow were also introduced in the paper of Osher
and Vese, see \cite{OV02}. For spherical target manifolds their idea is to consider the Dirichlet energy for the map $U = V / |V|$, where $V$ maps into the Euclidean space.
Obviously, this energy is invariant under scaling of $V$. They then derive the gradient flow, that is the evolution equations for the components of $V$. 
After discretization they obtain a numerical scheme, which preserves the property $|V_0| = 1$ in time. In particular, no renormalization has to be applied. 
The origin of this property is that the radial component of the variation of their new energy vanishes.   
Although the motivation behind the work in \cite{OV02} is similar to ours, their idea of getting rid of the constraint is totally different from ours. 
In particular, we will obtain a non-degenerate parabolic PDE-system.
We are not aware of any previous publications, where the theoretical idea to reformulate the harmonic map heat flow as an unconstrained problem by using totally geodesic 
embeddings was used to develop numerical schemes.

%Describe geodesic finite elements (Sander), the algorithm of Dziuk and Steinhilber (Lagrange multiplier, saddle point problem) and the flow method of Bartels (tangential deformations,
%orthogonal back projection onto the target manifold) in more detail. \textcolor{red}{Really?}

\subsubsection*{Outline of the paper}
In the next section, we introduce our notation and some basic results from differential geometry. In Section \ref{section_extended_HMF}, we proceed as follows.
We first introduce the harmonic map heat flow and respectively, the harmonic map equation
between two (not necessarily embedded) Riemannian manifolds. We then describe a method to construct a totally geodesic embedding
of the target manifold. Based on this construction, a suitable extension of the harmonic map heat flow can be formulated, see Section \ref{subsection_extension_HMF}.
The case of the target manifold being a round sphere is considered explicitly. 
In Section \ref{section_stability_HMF}, the stability of the extended flow is discussed, and 
in Section \ref{section_weak_formulation_HMF}, a weak formulation of the extended flow is derived. 
We then recall the surface finite element method in Section \ref{section_SFEM}.
Discretization of the weak formulation and the numerical analysis of our novel schemes are discussed in Sections \ref{section_discrete_problems} and \ref{section_numerical_analysis},
respectively. Details about the implementation are given in Section \ref{section_implementation}. In Section \ref{section_numerical_examples}, we present some numerical examples.

\section{Notation and Preliminaries}
\label{section_notation}
Throughout the paper $\Gamma$ and $\M$ will denote two smooth, not necessarily embedded manifolds. The dimensions of $\Gamma$ and $\M$ may differ if not otherwise stated.
$\Gamma$ and $\M$ are assumed to be closed, that is compact and without boundary.
Henceforward, we will make use of the convention to sum over repeated indices. To keep notation simple yet concise Roman indices will refer to the local coordinates 
of the smooth manifold $\Gamma$, whereas Greek indices will refer to the local coordinates of the smooth manifold $\M$ -- if not otherwise stated.
The Christoffel symbols, the gradient and the corresponding covariant derivative with respect to a Riemannian metric $m$ will be denoted by $\Gamma(m)^k_{ij}$,
$grad_m$ and $\nabla^m$. 
Partial derivatives in the parameter domain of a parametrization are denoted by $\partial_i$, whereas the partial derivatives in the ambient Euclidean space of
an embedding are denoted by $D_\alpha$, where we have used Greek indices to refer to the Euclidean coordinates of the ambient space.
The Laplacian with respect to $m$ of a twice-differentiable function $f$ is defined by 
$$
	(\Delta_m f) \circ \mathcal{C}^{-1} = m^{ij} \left( \partial_i \partial_j F - \Gamma(m)_{ij}^k \partial_k F \right).
$$
Here, $\mathcal{C}$ is a coordinate chart of the corresponding smooth manifold and $F := f \circ \mathcal{C}^{-1}$ is the local coordinate function for $f$
defined on some parameter domain $\Omega$. In the following, we will use (Roman and Greek) capital letters for the local coordinate functions 
of functions defined on a manifold if it is helpful. 
For a map $f: (\Gamma, m) \rightarrow (\M, g)$ between two Riemannian manifolds $(\Gamma, m)$ and $(\M,g)$ the map Laplacian is defined by
$$
	(\mathcal{C}_2 \circ (\Delta_{m,g} f) \circ \mathcal{C}^{-1}_1)^\alpha = m^{ij} \left(
		\partial_i \partial_j F^\alpha - \Gamma(m)_{ij}^l \partial_l F^\alpha + \Gamma(g)^\alpha_{\beta \gamma} \circ F \ \partial_i F^\beta \partial_j F^\gamma
	\right),
$$ 
where $\mathcal{C}_2$ is a coordinate chart of $\M$ and $\mathcal{C}_1$ is a coordinate chart of $\Gamma$. Henceforward, we will neglect the coordinate charts for the sake of simplicity.
The Riemannian volume form of a Riemannian manifold will be denoted by $do$. 

The Euclidean scalar product is denoted by $v \cdot w = \sum_i v_i w_i$ and the corresponding matrix scalar product by $A:B = \sum_{i,j} A_{ij} B_{ij}$.
The Euclidean norm is denoted by $|\cdot|$.
For an orientable smooth closed embedded hypersurface $\mathcal{S} \subset \mathbb{R}^{n+1}$ we define the signed (Euclidean) distance function $d$ to $\mathcal{S}$
by 
\begin{equation}
 d(x) := \left\{
 \begin{aligned}
 	&\inf_{p \in \M} |x - p| \quad \textnormal{if} \quad x \in \mathbb{R}^{n+1} \setminus U, \\
 	&- \inf_{p \in \M} |x - p| \quad \textnormal{else},
 \end{aligned}
 \right.
 \label{signed_distance_function}
\end{equation} 
where $U \subset \mathbb{R}^{n+1}$ is a bounded domain with $\partial U = \mathcal{S}$. For a smooth hypersurface $\mathcal{S} \subset \mathbb{R}^{n+1}$ (that means smooth as an embedding) 
the signed distance function is also smooth in some neighbourhood of $\mathcal{S}$ and we have
\begin{equation}
	|Dd(x)| = 1, \quad \textnormal{and hence} \quad D^2 d(x) Dd(x) = 0, 
	\label{properties_signed_distance_function}
\end{equation}
see for example \cite{DDE05}. 
The outward unit normal $\nu$ to $\mathcal{S}$ is defined by $\nu(x) := Dd(x)$ for $x \in \mathcal{S}$ and the tangential projection is $P := \unit - \nu \otimes \nu$.
The tangential gradient of a differentiable function $f:\mathcal{S} \rightarrow \mathbb{R}$ is
$\nabla_{\mathcal{S}} f := P D \tilde{f}$, where $\tilde{f}$ is a differentiable yet arbitrary extension of $f$ into a neighbourhood of $\mathcal{S}$.
It is easy to show that this definition only depends on the values of $f$ on $\mathcal{S}$. In fact, the tangential gradient is given by the
gradient $grad_m f$ if $m$ is the metric on $\mathcal{S}$ induced the Euclidean metric. The Laplace-Beltrami operator of a twice-differentiable function $f$
is $\Delta_{\mathcal{S}} f := \nabla_{\mathcal{S}} \cdot \nabla_{\mathcal{S}} f$. If $m$ is the induced metric, then $\Delta_{\mathcal{S}} f = \Delta_m f$.

\begin{prop}
\label{Prop_invariance_map_Laplacian}
Let $(\Gamma,m)$ and $(\M,g)$ be two smooth Riemannian manifolds. Furthermore, let $\phi: \M \rightarrow \M$ be a smooth diffeomorphism.
Then, the map Laplacian is invariant under $\phi$, that is
$$
	(d\phi \circ f) (\Delta_{m,g} f) = \Delta_{m, \phi_\ast g} (\phi \circ f)
$$
\end{prop}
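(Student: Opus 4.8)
The plan is to reduce the statement to the coordinate definition of the map Laplacian given in Section~\ref{section_notation} and to exploit the single geometric fact that, by the very definition of the pushforward metric, $\phi$ is an isometry from $(\M,g)$ onto $(\M,\phi_\ast g)$. Conceptually, the map Laplacian is the trace with respect to $m$ of the second fundamental form of $f$, and an isometry of the target is totally geodesic, so composing with $\phi$ cannot generate any extra curvature term: the tension field is merely transported by $d\phi$. I would make this rigorous by a direct computation in local coordinates, matching the conventions already fixed, and isolate the one nontrivial ingredient, namely the behaviour of the target Christoffel symbols under $\phi$.

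First I would write $\Phi$ for the coordinate representative of $\phi$ and $F$ for that of $f$, so that $\phi\circ f$ is represented by $\Phi\circ F$ and $d\phi\circ f$ by the Jacobian $(\partial_\alpha\Phi^\mu)\circ F$. Expanding the right-hand side $\Delta_{m,\phi_\ast g}(\phi\circ f)$ with the chain rule, the first derivatives give $\partial_i(\Phi\circ F)^\mu=(\partial_\alpha\Phi^\mu)\circ F\,\partial_i F^\alpha$ and the second derivatives produce
$$
	\partial_i\partial_j(\Phi\circ F)^\mu = (\partial_\alpha\partial_\beta\Phi^\mu)\circ F\,\partial_i F^\alpha\partial_j F^\beta + (\partial_\alpha\Phi^\mu)\circ F\,\partial_i\partial_j F^\alpha.
$$
The terms containing $(\partial_\alpha\Phi^\mu)\circ F$ linearly — that is, the $\partial_i\partial_j F^\alpha$ piece together with the Christoffel term coming from $m$ — already assemble into $(\partial_\alpha\Phi^\mu)\circ F$ times the first two pieces of $(\Delta_{m,g}f)^\alpha$. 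It then remains to match the genuinely second-order term $(\partial_\alpha\partial_\beta\Phi^\mu)\circ F\,\partial_i F^\alpha\partial_j F^\beta$ against the contribution of the target Christoffel symbols of $\phi_\ast g$.

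This matching is the step I expect to be the main obstacle, and it rests on the transformation law
$$
	\Gamma(\phi_\ast g)^\mu_{\nu\rho}\circ\phi\;\partial_\beta\Phi^\nu\,\partial_\gamma\Phi^\rho = (\partial_\alpha\Phi^\mu)\,\Gamma(g)^\alpha_{\beta\gamma} - \partial_\beta\partial_\gamma\Phi^\mu,
$$
which is exactly the classical change-of-variables behaviour of the connection coefficients with $\phi$ in the role of the coordinate change. I would derive it either by differentiating the defining relation of $\phi_\ast g$ in coordinates, or, more cleanly, from the fact that an isometry preserves the Levi-Civita connection and hence sends $g$-geodesics to $\phi_\ast g$-geodesics; comparing the two geodesic equations under $\Phi$ and using that the resulting identity holds for all tangent directions yields the displayed formula. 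Substituting it into the target Christoffel term of $\Delta_{m,\phi_\ast g}(\phi\circ f)$ and contracting with $m^{ij}\partial_i F^\beta\partial_j F^\gamma$ produces precisely $-(\partial_\alpha\partial_\beta\Phi^\mu)\circ F\,\partial_i F^\alpha\partial_j F^\beta$, cancelling the leftover second-order term, while the surviving factor reconstitutes $(\partial_\alpha\Phi^\mu)\circ F\,(\Delta_{m,g}f)^\alpha=(d\phi\circ f)(\Delta_{m,g}f)^\mu$. Everything apart from the Christoffel identity is routine bookkeeping with the chain rule and relabelling of summation indices.
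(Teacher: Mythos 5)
Your proposal is correct and follows essentially the same route as the paper: a local-coordinate chain-rule expansion reducing everything to the transformation law $\Gamma(\phi_\ast g)^\kappa_{\iota\rho}\circ\Phi\,\partial_\beta\Phi^\iota\partial_\gamma\Phi^\rho=\partial_\alpha\Phi^\kappa\,\Gamma(g)^\alpha_{\beta\gamma}-\partial_\beta\partial_\gamma\Phi^\kappa$, which is exactly the identity the paper isolates and derives from the coordinate definition of the push-forward metric. The only cosmetic difference is that you expand the right-hand side while the paper manipulates the left-hand side, and you additionally offer a geodesic-preservation derivation of the Christoffel identity, but the substance is the same.
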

\begin{proof} In local coordinates, we compute
\begin{align*}
	&(\partial_\alpha \Phi^\kappa) \circ F \ m^{ij} 
	(\partial_i \partial_j F^\alpha - \Gamma(m)^l_{ij} \partial_l F^\alpha + \Gamma(g)_{\beta \gamma}^\alpha \circ F \ \partial_i F^\beta \partial_j F^\gamma)
	\\
	&= m^{ij} 
		(\partial_i \partial_j (\Phi \circ F)^\kappa - (\partial_\beta \partial_\gamma \Phi^\kappa) \circ F \ \partial_i F^\beta \partial_j F^\gamma 
	\\	
	& \qquad \quad - \Gamma(m)^l_{ij} \partial_l (\Phi \circ F)^\kappa + (\partial_\alpha \Phi^\kappa \Gamma(g)_{\beta \gamma}^\alpha) \circ F \ \partial_i F^\beta \partial_j F^\gamma).
\end{align*}
The claim then follows from the fact that
$$
	\partial_\alpha \Phi^\kappa \Gamma(g)_{\beta \gamma}^\alpha - \partial_\beta \partial_\gamma \Phi^\kappa 
	= (\Gamma(\phi_\ast g)^\kappa_{\iota \rho}) \circ \Phi \ \partial_\beta \Phi^\iota \partial_\gamma \Phi^\rho, 
$$
which can be seen by a straightforward calculation using the definition of the Christoffel symbols and the definition of the push-forward metric $\hat{g}:= \phi_\ast g$, 
which means that 
$$
	g_{\alpha \beta} = \hat{g}_{\kappa \iota} \circ \Phi \ \partial_\alpha \Phi^\kappa \partial_\beta \Phi^\iota.
$$
\end{proof}
A map $i: \M \rightarrow \M$ is called an involution if $i(i(p)) = p$ for all $p \in \M$.
\begin{defn}
Let $(\M,g)$ be a Riemannian submanifold of $(\N, G)$. The tensor $II(\cdot, \cdot): T\M \times T\M \rightarrow N\M$ 
on the tangent bundle $T\M$ with image in the normal bundle $N\M$ defined by $II(u,w) := \nabla^G_u w - \nabla^g_u w$ for $u, w \in T\M$ is called the shape tensor of $\M$ in $\N$.
\end{defn}
\begin{defn}
A Riemannian submanifold $(\M,g)$ of a Riemannian manifold $(\N, G)$ is called totally geodesic if the shape tensor $II = 0$ of $\M$ in $\N$ vanishes.
\end{defn}
That such a submanifold is called totally geodesic is motivated by the following result.
\begin{prop}
A Riemannian submanifold $(\M,g)$ of a Riemannian manifold $(\N,G)$ is a totally geodesic submanifold if and only if
any geodesic on the submanifold $(\M,g)$ is also geodesic on the Riemanian manifold $(\N,G)$. 
\end{prop}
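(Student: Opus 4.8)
The plan is to prove both implications directly from the orthogonal splitting of the ambient covariant derivative that is already encoded in the definition of the shape tensor. Since $II(u,w) = \nabla^G_u w - \nabla^g_u w$ is stipulated to take values in the normal bundle $N\M$, every pair of vector fields $u,w$ tangent to $\M$ satisfies the Gauss decomposition
$$
\nabla^G_u w = \nabla^g_u w + II(u,w),
$$
in which $\nabla^g_u w \in T\M$ is the tangential part and $II(u,w) \in N\M$ is the ($G$-orthogonal) normal part. The first thing I would record is that this identity descends to covariant derivatives along a curve: if $\gamma$ is a smooth curve in $\M$, then extending $\dot\gamma$ locally to a tangent vector field and using the tensoriality of $II$ yields
$$
\frac{D^G}{dt}\dot\gamma = \frac{D^g}{dt}\dot\gamma + II(\dot\gamma,\dot\gamma),
$$
again an orthogonal decomposition into a tangential and a normal component.

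For the forward implication I would assume $\M$ totally geodesic, so $II \equiv 0$, and take a geodesic $\gamma$ of $(\M,g)$, i.e. $\frac{D^g}{dt}\dot\gamma = 0$. The curve decomposition above then gives $\frac{D^G}{dt}\dot\gamma = 0$ at once, so $\gamma$ is a geodesic of $(\N,G)$; this direction is a one-line consequence of the splitting.

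For the converse I would invoke the existence of geodesics with prescribed initial data. Fixing $p \in \M$ and $v \in T_p\M$, let $\gamma$ be the $(\M,g)$-geodesic with $\gamma(0)=p$ and $\dot\gamma(0)=v$. By hypothesis $\gamma$ is also a geodesic of $(\N,G)$, so both $\frac{D^g}{dt}\dot\gamma = 0$ and $\frac{D^G}{dt}\dot\gamma = 0$ hold along $\gamma$; comparing these with the curve decomposition forces $II(\dot\gamma,\dot\gamma)=0$, and evaluating at $t=0$ gives $II(v,v)=0$. As $p$ and $v$ range freely, this shows that $II$ vanishes on the diagonal.

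The step I expect to be the crux is upgrading ``$II(v,v)=0$ for all $v$'' to ``$II \equiv 0$'', since geodesics only probe the diagonal of the tensor. This needs bilinearity, which is the standard tensoriality of $II$ in each slot, together with symmetry $II(u,w)=II(w,u)$. I would obtain symmetry from the torsion-freeness of the two Levi--Civita connections: subtracting the Gauss formulas for the pairs $(u,w)$ and $(w,u)$ gives $II(u,w)-II(w,u) = (\nabla^G_u w - \nabla^G_w u) - (\nabla^g_u w - \nabla^g_w u) = [u,w]-[u,w] = 0$. With symmetry established, polarization finishes the argument: expanding $II(u+w,u+w) = II(u,u) + 2\,II(u,w) + II(w,w)$ and using that all diagonal terms vanish yields $II(u,w)=0$ for arbitrary tangent $u,w$, i.e. $\M$ is totally geodesic.
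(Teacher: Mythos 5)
Your proof is correct and is essentially the standard argument that the paper delegates to its reference (O'Neill, Ch.~4, Prop.~13): the Gauss decomposition $\nabla^G_u w = \nabla^g_u w + II(u,w)$ specialized along curves gives the forward direction immediately, and the converse follows by running $(\M,g)$-geodesics through every $(p,v)$ to kill $II$ on the diagonal and then polarizing, using the symmetry of $II$ obtained from torsion-freeness of both Levi--Civita connections. You correctly identified and closed the one genuine gap in the naive argument, namely that geodesics only see $II(v,v)$, so nothing further is needed.
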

\begin{proof}
See \cite{ON83}, Chapter 4, Proposition 13.
\end{proof}

\section{The extended harmonic map heat flow}
\label{section_extended_HMF}

\subsection{Harmonic maps and the harmonic map heat flow}
\label{section_HMF}
\begin{defn}
Let $(\Gamma,m)$ and $(\M,g)$ be two smooth closed Riemannian manifolds. Furthermore, let $f_0: \Gamma \rightarrow \M$ be a smooth map.
A smooth solution $f: \Gamma \times [0,T) \rightarrow \M$ to the evolution equation
\begin{equation}
f_t = \Delta_{m,g} f \quad \textnormal{with $f(\cdot, 0) = f_0(\cdot)$}.
\label{harmonic_map_heat_flow}
\end{equation}
is called a harmonic map heat flow.
\end{defn}
\begin{remark}
The harmonic map heat flow is the $L^2$-gradient flow for the Dirichlet energy 
$$
	E(f):= \frac{1}{2} \int_\Gamma \mbox{trace}_m (f^\ast g) \, do.
$$
\end{remark}
\begin{remark}
Let $(\Gamma,m)$ and $(\M,g)$ be two $n$-dimensional closed Riemannian manifolds which are isometrically and smoothly embedded into $\mathbb{R}^{n+1}$.
Then $f: \Gamma \times [0,T) \rightarrow \M$ is a harmonic map heat flow if and only if it solves (\ref{HMF_embedded}).
\end{remark}
Short-time existence and uniqueness of harmonic map heat flows were proved in \cite{ES64}.
It is an interesting question whether the solution for the above flow exists for all times. In \cite{ES64} an affirmative answer was given for target
manifolds $(\M,g)$ of negative Riemannian curvature.
Using this result, it is possible to establish existence of stationary solutions (in each homotopy class of $f_0$) 
by considering the limit $t \rightarrow \infty$ of the harmonic map heat flow.
\begin{defn}
Let $(\Gamma,m)$ and $(\M, g)$ be two smooth closed Riemannian manifolds. A smooth solution $f: \Gamma \rightarrow \M$ to
\begin{equation}
\Delta_{m,g} f = 0
\end{equation}
is called a harmonic map.
\end{defn} 
In this paper, we aim to tackle the following problems. 
\begin{problem}
\label{Problem_HMF}
Let $(\Gamma,m)$ and $(\M, g)$ be two smooth closed Riemannian manifolds. Compute an approximation to the harmonic map heat flow (\ref{harmonic_map_heat_flow}).
\end{problem}
\begin{problem}
\label{Problem_harmonic_maps}
Let $(\Gamma,m)$ and $(\M,g)$ be two smooth closed Riemannian manifolds. Compute an approximation to a harmonic map $f: (\Gamma,m) \rightarrow (\mathcal{M},g)$.
\end{problem}
Our strategy to solve Problem \ref{Problem_harmonic_maps} numerically is to consider the long-time behaviour of solutions to Problem \ref{Problem_HMF}.
This approach is closely related to the existence proof of harmonic maps in \cite{ES64} and was already used in the works of Bartels, see e.g. \cite{Ba08},
who considers the corresponding $H^1$-gradient flow for stability reasons. The main difference between our work and the results of Bartels are therefore
the way how we solve Problem \ref{Problem_HMF}.
Instead of computing geometric flows, the works of Clarenz and Dziuk in \cite{CD03} as well as of Steinhilber in \cite{St14} rely on a Newton type method for a reformulated problem.
Although Newton's method usually converges much faster, its convergence in general depends on a good initial guess.

\subsection{Construction of a totally geodesic embedding.}
\label{section_totally_geodesic_embeddings}

The construction of a totally geodesic embedding relies on the following result. 
\begin{thm}
\label{Theorem_totally_geodesic}
Let $(\M,g)$ be a Riemannian submanifold of $(\N,G)$ and let $i: \N \rightarrow \N$ be an isometry, such that $\M$ is a (path-)connected component of the fixed point set of
$i$, that is of $\{ p \in \N: i(p) = p \}$. Then $\M$ is totally geodesic. 
\end{thm}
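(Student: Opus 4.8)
The plan is to use the characterization proved just above: $\M$ is totally geodesic in $\N$ if and only if every geodesic of $(\M,g)$ is also a geodesic of $(\N,G)$. It therefore suffices to show that an arbitrary geodesic of $\M$ remains a geodesic of $\N$. The whole argument rests on one standard fact about isometries, namely that an isometry of $\N$ carries geodesics of $\N$ to geodesics of $\N$, since it preserves the Levi-Civita connection $\nabla^G$.

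First I would record two elementary facts at a point $p \in \M$. Since $i(p)=p$, the differential $di_p : T_p\N \to T_p\N$ is a well-defined linear isometry. Moreover, every $v \in T_p\M$ is fixed by $di_p$: choosing a curve $c$ in $\M$ with $c(0)=p$ and $\dot c(0)=v$ and differentiating the identity $i\circ c = c$, which holds because $\M$ lies in the fixed point set of $i$, gives $di_p(v)=v$. Thus $T_p\M$ is contained in the fixed subspace of $di_p$.

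Next I would exploit uniqueness of geodesics. Let $\gamma$ be the geodesic of $\N$ with $\gamma(0)=p$ and $\dot\gamma(0)=v\in T_p\M$. Because $i$ is an isometry, $i\circ\gamma$ is again a geodesic of $\N$, and by the two facts above it has the same initial data, $(i\circ\gamma)(0)=p$ and $\tfrac{d}{dt}(i\circ\gamma)(0)=di_p(v)=v$. Uniqueness of geodesics with prescribed initial position and velocity forces $i\circ\gamma=\gamma$, so $\gamma(t)$ is a fixed point of $i$ for every $t$ in its interval of definition. Since the image of $\gamma$ is a connected, indeed path-connected, subset of the fixed point set containing $p$, it must lie in the single component $\M$; this is precisely where the hypothesis that $\M$ be a component is used. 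Hence $\gamma$ is a geodesic of $\N$ lying entirely in $\M$.

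Finally I would transfer this back to geodesics of $\M$. Rearranging the definition of the shape tensor yields the orthogonal decomposition $\nabla^G_{\dot\gamma}\dot\gamma = \nabla^g_{\dot\gamma}\dot\gamma + II(\dot\gamma,\dot\gamma)$ into its tangential and normal parts, so the vanishing $\nabla^G_{\dot\gamma}\dot\gamma=0$ forces both $\nabla^g_{\dot\gamma}\dot\gamma=0$ and $II(\dot\gamma,\dot\gamma)=0$; in particular $\gamma$ is also a geodesic of $\M$. Given an arbitrary geodesic $\sigma$ of $\M$, comparing it with the $\N$-geodesic sharing its initial data shows, by the previous step and uniqueness of geodesics in $\M$, that the two coincide, so $\sigma$ is a geodesic of $\N$; the characterization above then gives $II=0$. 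Alternatively, evaluating $II(v,v)=0$ at each $p$ for every $v\in T_p\M$ and polarizing, using that $II$ is symmetric and bilinear, yields $II\equiv 0$ directly. I expect the only genuinely delicate point to be this localization: one must argue that $\gamma$ stays not merely in the fixed point set but in the component $\M$, which is guaranteed by connectedness of the image of $\gamma$ together with the component hypothesis.
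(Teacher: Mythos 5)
Your proof is correct; the paper itself gives no argument here but simply cites Klingenberg (Theorem 1.10.15), and what you have written is precisely the standard proof found there: $di_p$ fixes $T_p\M$, uniqueness of $\N$-geodesics forces $i\circ\gamma=\gamma$, path-connectedness confines $\gamma$ to the component $\M$, and splitting $\nabla^G_{\dot\gamma}\dot\gamma=0$ into tangential and normal parts gives $II(v,v)=0$, hence $II\equiv 0$ by polarization. You also correctly identified the one delicate point (passing from ``fixed point set'' to ``the component $\M$''), so nothing is missing.
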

\begin{proof}
See \cite{Kl82}, Theorem $1.10.15$.
\end{proof}
An obvious non-trivial example for a totally geodesic submanifold is the $k$-sphere $S^k = \{ x \in \mathbb{R}^{n+1}: |x| = 1 \; \textnormal{and} \; 
x_{k+2} = \ldots x_{n+1} = 0 \}$ for $1 \leq k \leq n-1$ considered as a submanifold 
of the standard $n$-sphere $\mathbb{S}^n = \{ x \in \mathbb{R}^{n+1}: |x| = 1 \}$ with Riemannian metric induced by the Euclidean metric in $\mathbb{R}^{n+1}$.
This follows directly from the fact that $S^k \subset \mathbb{S}^n$ is the fixed point set of the following isometry $i: \mathbb{S}^n \rightarrow \mathbb{S}^n$ with
$x \mapsto (x_1, \ldots, x_{k+1}, -x_{k+2}, \ldots, -x_{n+1})$.

Henceforward, we will assume that the $n$-dimensional smooth manifold $\M$ is smoothly embedded into $\mathbb{R}^N$ for $N$ large enough.
From the strong Whitney theorem it follows that this assumption is no restriction 
since such an embedding always exists for $N=2n$. Please note that, in general, it is not necessary to assume that this embedding is isometric.
In order to make the Riemannian manifold $(\M,g)$ a totally geodesic submanifold of some other Riemannian manifold we need the following two ingredients
\begin{enumerate}
\item An extension of the Riemannian metric $g$ on $\M \subset \mathbb{R}^N$ to a Riemannian metric $G'$ on a suitable tubular neighbourhood $\N_T$ such that
$(\M, g)$ is a Riemannian submanifold of $(\N_T, G')$.
\item An involution $i: \N_T \rightarrow \N_T$ on the tubular neighbourhood $\N_T$ of $\M$ such that $\M$ is the fixed point set of $\M$.
\end{enumerate}
We then define the Riemannian metric $G$ on $\N_T$ by
\begin{equation}
	G := \tfrac{1}{2} (G' + i^\ast G').
	\label{averaged_metric}
\end{equation}
Since $\M$ is the fixed point set of $i$, we have $G = G'$ on $\M$. By assumption, $G$ therefore induces the Riemannian metric $g$ on $\M$.
Moreover, since $i^\ast G = \tfrac{1}{2} (i^\ast G' + i^\ast (i^\ast G')) = \tfrac{1}{2} (i^\ast G' + G') = G$, the involution $i$ is 
an isometry. Using Theorem \ref{Theorem_totally_geodesic}, we can conclude that $(\M,g)$ is totally geodesic submanifold of $(\N_T, G)$.
The idea of the above construction in the context of harmonic maps on manifolds with boundary can be found in \cite{Ham75}, Chapter IV.5.
A detailed description is also given in \cite{Hu94}, Chapter 4.1.
In the following, we describe how to find an involution $i$ satisfying the above condition in the most important case for applications, that is when $\M \subset \mathbb{R}^{n+1}$
is an isometrically embedded hypersurface.
We start by considering the special case of $\M = \mathbb{S}^n \subset \mathbb{R}^{n+1}$.

\subsubsection*{Example: The round unit sphere as a totally geodesic submanifold.}
In order to demonstrate the practicability of the above construction, we now consider the standard sphere $\mathbb{S}^n \subset \mathbb{R}^{n+1}$ with the metric induced 
by the Euclidean metric of the ambient space. The Euclidean metric is then an extension of the metric $g$ on $\M$ by construction. The sphere inversion
$i: \mathbb{R}^{n+1}\setminus \{ 0 \} \rightarrow \mathbb{R}^{n+1} \setminus \{ 0 \}$ with $x \mapsto x/|x|^2$ is an involution on the neighbourhood 
$\N_T := \mathbb{R}^{n+1}\setminus \{0\}$. Its derivative is given by
$$
	Di(x) = \tfrac{1}{|x|^2} \left( \unit - 2 \tfrac{x}{|x|} \otimes \tfrac{x}{|x|} \right) = \tfrac{1}{|x|^2} \left( \unit - 2 \nu(x) \otimes \nu(x) \right),
$$  
where $\nu(x) = x/|x|$ is the extension of the outward unit normal, which is constant in the normal direction.
We average the Euclidean metric under $i$ like in (\ref{averaged_metric}) and obtain
\begin{equation}
	G(x) = \tfrac{1}{2} \left( \unit + Di^T(x) Di(x) \right) = \left( \tfrac{1}{2} + \tfrac{1}{2|x|^4} \right) \unit.
	\label{extended_sphere_metric}
\end{equation}
We define $\rho: \mathbb{R}^{n+1}\setminus \{0\} \rightarrow \mathbb{R}$ by $\rho(x) := \tfrac{1}{2} + \tfrac{1}{2|x|^4}$ and write $G(x) = \rho(x) \unit$.
Please note that it is possible to alter $\rho$ in such a way that we obtain a smooth Riemannian metric $G$ on the whole of $\mathbb{R}^{n+1}$ 
without changing $G$ in a neighbourhood of $\M$. Then, $(\M,g)$ would be a totally geodesic submanifold of $(\mathbb{R}^{n+1}, G)$.
However, since this is only a technical step and since we will, in fact, not use the values of the metric in a neighbourhood of the origin,
we here ignore this point for the sake of simplicity.  
For non-spherical target manifolds, it will not be possible to use the sphere inversion as an involution. 
We will next show that there is a simple alternative for the construction of an involution, which is based on the usage of 
Fermi coordinates in a suitable neighbourhood of $\M$. 

\subsubsection*{Construction of an extension of the metric and of an involution}
We now consider the case when the target manifold $(\M,g)$ is an orientable, $n$-dimensional, smooth closed hypersurface in $\mathbb{R}^{n+1}$,
whose metric is induced by the Euclidean metric of the ambient space. Such a target manifold $(\M,g)$ will not be a totally geodesic submanifold of the ambient Euclidean space.
We will therefore now construct a metric $G$ on some tubular neighbourhood $\N_T$ of $\M$ such that $(\M,g)$ is a totally geodesic submanifold of $(\N_T,G)$.
Since $g$ is induced by the Euclidean metric, we choose $G'_{\alpha\beta} = \delta_{\alpha\beta}$. In order to construct an involution $i$ with fixed point set
$\M$, we next choose a tubular neighbourhood $\N_T$ of fixed width such that for all $x \in \N_T$ the decomposition $x = a(x) + d(x) Dd(x)$ with $a(x) \in \M$ is unique.
Here, $d: \N_T \rightarrow \mathbb{R}$ denotes the signed distance function (\ref{signed_distance_function}) to $\M \subset \mathbb{R}^{n+1}$.
On $\N_T$ we then consider the map $i : \N_T \rightarrow \N_T$ defined by 
\begin{equation}
	i(x) := x - 2 d(x) Dd(x).
	\label{involution_oriented_distance}
\end{equation}
Using the fact that $|Dd| = 1$ on $\N_T$, see (\ref{properties_signed_distance_function}), it is easy to see that $\M$ is the fixed point set of $i$.
Moreover, we have
\begin{align*}
 i(i(x)) &= x - 2 d(x) Dd(x) - 2 d(x- 2d(x) Dd(x)) Dd(x - 2 d(x) Dd(x)) \\
 &= x - 2 d(x) Dd(x) - 2 (d(x) - 2d(x)) Dd(x) = x,
\end{align*}
where we have used the fact that $d(x)$ is a linear function on the segment from $x$ to $x - 2d(x) Dd(x)$ and $|Dd| = 1$, see \cite{DDE05}.
It follows that $i$ is an involution on $\N_T$. The derivative of $i$ is given by	
$$
	Di(x) = \unit - 2 Dd(x) \otimes Dd(x) - 2 d(x) D^2d(x).
$$
Using (\ref{properties_signed_distance_function}), we obtain that
$$
	Di^T(x) Di(x) = \unit - 4 d(x) D^2d(x) + 4 d(x)^2 D^2d(x) D^2d(x).
$$
The Riemannian metric $G$ on $\N_T$ defined in (\ref{averaged_metric}) is therefore given by
\begin{equation}
	G(x) = \tfrac{1}{2} \left( \unit + Di^T(x) Di(x) \right) = \unit - 2d(x) D^2 d(x) + 2 d(x)^2 D^2d(x) D^2d(x).
	\label{extended_metric_hypersurfaces}
\end{equation}
For example, for the standard $n$-sphere in $\mathbb{R}^{n+1}$, the metric $G$ based on the involution (\ref{involution_oriented_distance}) 
is given by
$$
	G(x) = \unit + \tfrac{2}{|x|^2} (1 - |x|) \left( \unit - \tfrac{x}{|x|} \otimes \tfrac{x}{|x|} \right).
$$
If the target manifold is the standard sphere, we will only use the metric (\ref{extended_sphere_metric}) based on the sphere inversion in the following. 
So far, we have not treated the case when the target manifold is a submanifold of higher co-dimensions 
and when the metric $g$ is not induced by the Euclidean metric of the ambient space.
In the latter case, a starting point would be to use an extension $G'$ of $g$ like in \cite{Fr15} and generalized distance functions.
We leave this problem for future research.

\subsection{Extension of the harmonic map flow based on totally geodesic embeddings}
\label{subsection_extension_HMF}
\begin{thm}
\label{Theorem_extended_map_Laplacian}
Let $(\M,g)$ be a totally geodesic submanifold of $(\N,G)$ and $f: \Gamma \rightarrow \M \subset \N$ a $C^2$-map defined
on the Riemannian manifold $(\Gamma,m)$ then 
$$
	\Delta_{m,g} f = \Delta_{m,G} f.
$$
\end{thm}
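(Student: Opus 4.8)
The plan is to compare the two map Laplacians in a single, well-chosen coordinate system and to recognise their difference as the $m$-trace of the shape tensor, which vanishes by hypothesis. Concretely, I would begin by fixing, around a point $f(x_0)\in\M$, an adapted chart $\mathcal{C}_2$ on a neighbourhood in $\N$ in which $\M$ is cut out by the vanishing of the transversal coordinates; Fermi coordinates for the inclusion $\M\subset\N$ are the natural choice, since along $\M$ the transversal coordinate vector fields are then $G$-orthonormal and normal, while the remaining coordinate fields are tangent. Writing Greek indices $\alpha,\beta,\gamma$ for the coordinates tangent to $\M$ and indices $r,s$ for the transversal ones, the decisive feature of such a chart is that a map $f$ with image in $\M$ has $F^r\equiv 0$, hence $\partial_i F^r=0$ and $\partial_i\partial_j F^r=0$ for every transversal $r$. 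The restriction of $\mathcal{C}_2$ to $\{F^r=0\}$ may then be used as the target chart for the intrinsic Laplacian $\Delta_{m,g}f$, which legitimises a term-by-term comparison.

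With this setup I would write out $\Delta_{m,G}f$ from the coordinate definition of the map Laplacian and split the result into its tangential ($\alpha$) and transversal ($r$) components. Because $\partial_i F^r=0$, every contraction over the target indices collapses onto the tangential ones, so the tangential component reads
$$
(\Delta_{m,G}f)^\alpha = m^{ij}\bigl(\partial_i\partial_j F^\alpha - \Gamma(m)^l_{ij}\partial_l F^\alpha + \Gamma(G)^\alpha_{\beta\gamma}\circ F\ \partial_i F^\beta\partial_j F^\gamma\bigr),
$$
whereas the transversal component reduces to $(\Delta_{m,G}f)^r = m^{ij}\,\Gamma(G)^r_{\beta\gamma}\circ F\ \partial_i F^\beta\partial_j F^\gamma$. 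The key identity to establish is then the coordinate form of the Gauss decomposition. Since $G$ induces $g$ on $\M$ and the adapted chart satisfies $G_{\alpha r}|_\M=0$, a short computation with the Christoffel symbols shows that the purely tangential symbols agree along $\M$, $\Gamma(G)^\alpha_{\beta\gamma}=\Gamma(g)^\alpha_{\beta\gamma}$ on $\{F^r=0\}$, while the mixed symbols are precisely the coordinate components of the shape tensor, $II(\partial_\beta,\partial_\gamma)=\Gamma(G)^r_{\beta\gamma}\,\partial_r$. Both facts are immediate from the definition of the Christoffel symbols together with $G_{\alpha\beta}|_\M=g_{\alpha\beta}$, and from the definition $II(u,w)=\nabla^G_u w-\nabla^g_u w$.

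Granting this identity, the conclusion follows at once: the tangential component of $\Delta_{m,G}f$ coincides with the full coordinate expression for $\Delta_{m,g}f$, while the transversal component equals $m^{ij}\,II(\partial_i F,\partial_j F)\circ F$, the $m$-trace of the shape tensor pulled back along $f$. Since $(\M,g)$ is totally geodesic in $(\N,G)$ we have $II\equiv 0$, so the transversal part vanishes; hence $\Delta_{m,G}f$ takes values in $T\M$ and agrees there with $\Delta_{m,g}f$. I expect the main obstacle to lie in this middle step, namely in reconciling two genuinely different charts — an abstract chart on $\M$ and an adapted chart on $\N$ — and in verifying carefully that the mixed Christoffel symbols reproduce the shape tensor; once the comparison is arranged in a common adapted chart so that $\partial_i F^r=0$, the remainder is routine bookkeeping of the kind already carried out in the proof of Proposition \ref{Prop_invariance_map_Laplacian}.
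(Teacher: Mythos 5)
Your proposal is correct and follows essentially the same route as the paper: work in an adapted chart where $\M$ is the slice $\{x^{n+1}=\ldots=x^{n+r}=0\}$, use $F^r\equiv 0$ to collapse the target-index sums onto tangential indices, and read off $\Gamma(G)^\alpha_{\beta\gamma}=\Gamma(g)^\alpha_{\beta\gamma}$ and $\Gamma(G)^r_{\beta\gamma}=0$ from the vanishing of the shape tensor. The only cosmetic difference is that the paper extracts both Christoffel identities directly from the coordinate expression of $II=0$ in a general adapted chart, whereas you obtain the tangential one from the Fermi/orthogonality structure and reserve $II=0$ for the transversal one; both are valid.
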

\begin{proof}
The proof follows the argumentation in \cite{Ham75}.
Let $\dim \Gamma = d$, $\dim \M = n$ and $\dim \N = n+r$. By the definition of a submanifold, we can choose coordinates $\{x_1, \ldots, x_n, x_{n+1}, \ldots, x_{n+r} \}$
of $\N$ around $p \in \M$ such that we
locally have $\M = \{ q \in \N: x_{n+1} = \ldots = x_{n+r} = 0 \}$. Since $f: \Gamma \rightarrow \M$, we have
$F^{n+1} = \ldots = F^{n+r} = 0$ in these coordinates. 
\begin{align*}
\Delta_{m,G} f &= \sum_{\alpha=1}^{n+r} \sum_{i,j=1}^d m^{ij} \left( \partial_i \partial_j F^\alpha - \sum_{l=1}^d \Gamma(m)_{ij}^l \partial_l F^\alpha
					+ \sum_{\beta, \gamma=1}^{n+r} \Gamma(G)_{\beta \gamma}^\alpha \circ F \ \partial_i F^\beta \partial_j F^\gamma \right) \partial_\alpha
 \\
 &= \sum_{\alpha=1}^{n+r} \sum_{i,j=1}^d m^{ij} \left( \partial_i \partial_j F^\alpha - \sum_{l=1}^d \Gamma(m)_{ij}^l \partial_l F^\alpha
	+ \sum_{\beta,\gamma=1}^{n} \Gamma(G)_{\beta \gamma}^\alpha \circ F \ \partial_i F^\beta \partial_j F^\gamma \right) \partial_\alpha.
\end{align*}
Since $\M$ is a totally geodesic submanifold of $\N$, we conclude that
\begin{align*}
	0 &= II(u,w) \equiv \nabla^G_u w - \nabla^g_u w\\
	  &= \sum_{\alpha, \beta=1}^{n+r} U^\beta \left( \partial_\beta W^\alpha - \sum_{\gamma=1}^{n+r} \Gamma(G)_{\beta \gamma}^\alpha W^\gamma \right) \partial_\alpha
	   - \sum_{\alpha, \beta=1}^n U^\beta \left( \partial_\beta W^\alpha - \sum_{\gamma=1}^n \Gamma(g)_{\beta \gamma}^\alpha W^\gamma \right) \partial_\alpha
	  \\
	  &= - \sum_{\alpha, \beta, \gamma =1}^{n+r} \Gamma(G)_{\beta \gamma}^\alpha U^\beta W^\gamma \partial_\alpha
	   + \sum_{\alpha, \beta, \gamma =1}^n \Gamma(g)_{\beta \gamma}^\alpha U^\beta W^\gamma \partial_\alpha 
	  \\
	  &=  - \sum_{\alpha, \beta, \gamma =1}^{n} \left( \Gamma(G)_{\beta \gamma}^\alpha - \Gamma(g)_{\beta \gamma}^\alpha \right) U^\beta W^\gamma \partial_\alpha
	   - \sum_{\alpha=n+1}^{n+r} \sum_{\beta,\gamma = 1}^n \Gamma(G)_{\beta \gamma}^\alpha U^\beta W^\gamma \partial_\alpha 
\end{align*}
for all $C^1$-vector fields $u,v$  that are tangential to $\M$, that is $U^{n+1} = \ldots = U^{n+r} = 0$ and $W^{n+1} = \ldots = W^{n+r} = 0$.
Hence,
\begin{align*}
& \Gamma(G)_{\beta\gamma}^\alpha = \Gamma(g)_{\beta\gamma}^\alpha, \qquad \forall \alpha, \beta, \gamma \in \{1, \ldots, n\},
\\
& \Gamma(G)_{\beta\gamma}^\alpha = 0, \qquad \forall \beta, \gamma \in \{1, \ldots, n \} \ \textnormal{and} \ \alpha = n+1, \ldots, n+r. 
\end{align*}   
We conclude that
\begin{align*}
 \Delta_{m,G} f  &= \sum_{\alpha=1}^{n} \sum_{i,j=1}^d m^{ij} \left( \partial_i \partial_j F^\alpha - \sum_{l=1}^d \Gamma(m)_{ij}^l \partial_l F^\alpha
	+ \sum_{\beta,\gamma=1}^{n} \Gamma(G)_{\beta \gamma}^\alpha \circ F \ \partial_i F^\beta \partial_j F^\gamma \right) \partial_\alpha
	\\
	 &= \sum_{\alpha=1}^{n} \sum_{i,j=1}^d m^{ij} \left( \partial_i \partial_j F^\alpha - \sum_{l=1}^d \Gamma(m)_{ij}^l \partial_l F^\alpha
	+ \sum_{\beta,\gamma=1}^{n} \Gamma(g)_{\beta \gamma}^\alpha \circ F \ \partial_i F^\beta \partial_j F^\gamma \right) \partial_\alpha
	\\
	&= \Delta_{m,g} f.
\end{align*}
\end{proof}
By the above result, it is clear that a solution to the harmonic map heat flow (\ref{harmonic_map_heat_flow}) also solves 
the extended equation $f_t = \Delta_{m,G} f$. However, the most important application of the above theorem is the following result,
which gives a method to establish existence of solutions to (\ref{harmonic_map_heat_flow}) by solving the extended equation.

\begin{thm}
\label{Theorem_extended_HMF}
Let $(\Gamma, m)$ and $(\M, g)$ be two smooth closed Riemannian manifolds. Suppose that $(\M, g)$ is a totally geodesic submanifold of $(\mathcal{N}, G)$,
and that $i: \mathcal{N} \rightarrow \mathcal{N}$ is an isometry whose fixed point set is given by $\M$. 
Furthermore, let $f_0: \Gamma \rightarrow \mathcal{M}$ be a smooth map. 
Then the solution to 
\begin{equation}
	f_t = \Delta_{m,G} f \quad \textnormal{with $f(\cdot, 0) = f_0(\cdot)$ on $\Gamma$.} 
	\label{extended_HMF}
\end{equation}
solves the harmonic map heat flow (\ref{harmonic_map_heat_flow}). 
\end{thm}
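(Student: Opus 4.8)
The plan is to exploit the symmetry encoded in the isometry $i$ together with the uniqueness of solutions to the parabolic system (\ref{extended_HMF}). The guiding principle is that $i \circ f$ should solve the very same initial value problem as $f$, so that uniqueness forces $f = i \circ f$; since the fixed point set of $i$ is exactly $\M$, this confines $f$ to $\M$, and on $\M$ the two map Laplacians agree by Theorem \ref{Theorem_extended_map_Laplacian}.

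First I would verify that $i \circ f$ also solves (\ref{extended_HMF}). Because $i$ is an isometry of $(\N, G)$ we have $i^\ast G = G$, and applying $(i^{-1})^\ast$ to this identity gives $i_\ast G = G$. Applying Proposition \ref{Prop_invariance_map_Laplacian} with the target manifold $(\N, G)$ and the diffeomorphism $\phi = i$ then yields
$$
	(di \circ f)\,(\Delta_{m,G} f) = \Delta_{m, i_\ast G}(i \circ f) = \Delta_{m,G}(i \circ f).
$$
On the other hand, the chain rule gives $\partial_t (i \circ f) = (di \circ f)\,\partial_t f = (di \circ f)(\Delta_{m,G} f)$, where I used that $f$ solves (\ref{extended_HMF}). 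Combining the two identities shows that $h := i \circ f$ satisfies $h_t = \Delta_{m,G} h$. For the initial data I would use that $f_0$ maps into $\M$, which is the fixed point set of $i$, so that $i \circ f_0 = f_0$ and hence $h(\cdot, 0) = f_0 = f(\cdot,0)$.

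At this point $f$ and $i \circ f$ are two solutions of (\ref{extended_HMF}) with identical initial data. Invoking uniqueness of the solution to this initial value problem, I conclude $i \circ f = f$ on $\Gamma \times [0,T)$, i.e. $f(x,t)$ lies in the fixed point set of $i$, which by hypothesis is precisely $\M$. Thus $f(\cdot, t): \Gamma \to \M$ for every $t$, so Theorem \ref{Theorem_extended_map_Laplacian} applies and gives $\Delta_{m,G} f = \Delta_{m,g} f$. Substituting this into (\ref{extended_HMF}) yields $f_t = \Delta_{m,g} f$ with $f(\cdot,0) = f_0$, which is exactly the harmonic map heat flow (\ref{harmonic_map_heat_flow}).

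The main obstacle is the uniqueness statement invoked above, together with the precise functional-analytic setting of (\ref{extended_HMF}). The equation is a quasilinear parabolic system whose coefficients are the Christoffel symbols $\Gamma(G)$ of the ambient metric composed with $f$; short-time existence and uniqueness for smooth initial data follow from standard parabolic theory, in the spirit of the Eells--Sampson argument. One should also note that if $\N = \N_T$ is only a tubular neighbourhood, the metric $G$ is defined only there, so a priori the solution must be known to remain in $\N_T$; this is guaranteed on a short time interval by continuity, since $f_0(\Gamma) \subset \M$ is compact and contained in the interior of $\N_T$. On that interval the symmetrization argument applies verbatim, and the resulting confinement $f(\Gamma \times [0,T)) \subset \M$ shows a posteriori that the solution never approaches the boundary of the neighbourhood.
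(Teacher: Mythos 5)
Your argument is correct and coincides with the paper's own proof: both show that $i\circ f$ solves the same initial value problem via Proposition \ref{Prop_invariance_map_Laplacian} and the isometry property $i_\ast G = G$, invoke uniqueness to conclude $i\circ f = f$ and hence $f(\Gamma\times[0,T))\subset\M$, and then apply Theorem \ref{Theorem_extended_map_Laplacian}. Your closing remark on the tubular neighbourhood is also the content of the remark the paper places immediately after the theorem.
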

\begin{proof}
The proof follows the argumentation in \cite{Ham75}.
According to Theorem \ref{Theorem_extended_map_Laplacian} we only have to show that $f$ maps onto the submanifold $\M$ as long as $f$ exists.
Since $\M$ is the fixed point set of $i$, we have $i \circ f_0 = f_0$. 
Furthermore, the map $i \circ f$ solves equation (\ref{extended_HMF}) for initial data $i \circ f(\cdot,0) = i \circ f_0 (\cdot) = f_0 (\cdot)$. This can be seen as follows
\begin{align*}
	(i \circ f)_t = (di \circ f) (f_t) = (di \circ f) (\Delta_{m,G} f) = \Delta_{m, i_\ast G} (i \circ f) = \Delta_{m,G} (i \circ f),
\end{align*}
where we have made use of Proposition \ref{Prop_invariance_map_Laplacian} and the fact that $i$ is an isometry.
From the uniqueness of solutions to (\ref{extended_HMF}),
it then follows that $i \circ f = f$. Hence, $f$ must map onto the fixed point set of $i$, which is $\M$.
\end{proof}
\begin{remark}
If $\M$ is embedded into $\mathbb{R}^N$, the manifold $\N$ can be chosen to be a tubular neighbourhood of $\M$.
It is clear that for short times the solution to (\ref{extended_HMF}) remains in this neighbourhood. If we choose the maximal time interval for which 
the solution stays in the neighbourhood and apply the above theorem, we immediately see by a contradiction argument that this maximal time interval must be 
given by the maximal time of existence.     
\end{remark}
Existence of solutions to (\ref{harmonic_map_heat_flow}) can be proved by embedding the Riemannian manifold into some Euclidean space
and by solving the extended equation (\ref{extended_HMF}), where the isometry $i$ and the extended Riemannian metric $G$
are constructed as in Section \ref{section_totally_geodesic_embeddings}. The advantage of this approach is that the extended problem is
formulated for mappings with values in some Euclidean space. This means that there are no constraints which have to be satisfied a priori by the solution.
Instead, the fact that the solution maps into the target manifold is now a property of the solution which is due to a certain invariance of the elliptic operator.
Clearly, this observation is also interesting for numerics. 
Therefore, we here aim to compute an approximation to the extended harmonic map heat flow 
instead of solving the original problem (\ref{harmonic_map_heat_flow}). 
\begin{problem}
Let $(\Gamma, m)$ and $(\M, g)$ be two smooth closed Riemannian manifolds. Suppose that $(\M, g)$ is a totally geodesic submanifold of $(\mathcal{N}, G)$,
and that $i: \mathcal{N} \rightarrow \mathcal{N}$ is an isometry whose fixed point set is given by $\M$. Furthermore, let $f_0: \Gamma \rightarrow \mathcal{M}$ be a smooth map. 
Then find an approximation to the extended harmonic map heat flow (\ref{extended_HMF}).
\end{problem}

\subsubsection*{Example: The extended harmonic map heat flow for a spherical target mani\-fold}
In order to get a better feeling for the extended harmonic map heat flow, we now consider the case when the target manifold $\M$ is the standard $n$-sphere in 
$\mathbb{R}^{n+1}$ with metric induced by the Euclidean metric of the ambient space. 
As we have seen above, the target manifold is then a totally geodesic submanifold of $\mathbb{R}^{n+1} \setminus \{ 0\}$ with Riemannian metric $G(x) := \rho(x) \unit$
as in (\ref{extended_sphere_metric}).
In particular, the sphere inversion is an isometry of the metric $G$. The assumptions in Theorem \ref{Theorem_extended_HMF} 
are therefore satisfied. Next, we derive equation (\ref{extended_HMF}) explicitly for the standard $n$-sphere.
We first observe that $D_\beta G_{\kappa \iota} = D_\beta \rho (x) \delta_{\kappa \iota}$ and $D_\beta \rho(x) = - \frac{2 x_\beta}{|x|^6}$. 
Hence, we obtain for all $\gamma = 1, \ldots, n+1$ that
\begin{equation*}
 f^\gamma_t = \Delta_m f^\gamma + \frac{1}{\tfrac{1}{2} + \tfrac{1}{2|f|^4}} 
 \sum_{\beta=1}^{n+1} \left(- \frac{2 f^\beta}{|f|^6} m(grad_m f^\beta, grad_m f^\gamma)  + \frac{f^\gamma}{|f|^6} m(grad_m f^\beta, grad_m f^\beta)\right),
\end{equation*}
where the Greek indices refer to the Euclidean coordinates of the ambient space. 
If we choose $(\Gamma, m)$ to be the standard $n$-sphere in $\mathbb{R}^{n+1}$ too, we see that the map $f: \Gamma \rightarrow \mathbb{R}^{n+1} \setminus \{ 0 \}$
with $f(x,t):= r(t) x$ and $r(0) > 0$ is an extended harmonic map heat flow if $r(t)$ satisfies the ODE 
$$
	r'(t) = nr(t) \left( \frac{1 - r(t)^4}{1 + r(t)^4} \right),
$$
from which we immediately conclude that the standard $n$-sphere is an attractor. We have therefore found a good candidate for an extension of the harmonic map heat flow,
which we expect to be stable under small perturbations in the following sense: A map $f$ with initial values close to the target manifold has values
close to target manifold as long as it exists.
That this is indeed true will be shown in the next section.  

\subsection{Stability of the extended harmonic map heat flow}
\label{section_stability_HMF}
Let $(\Gamma,m)$ and $(\M,g)$ be as in Theorem $\ref{Theorem_extended_HMF}$ and $f: \Gamma \times [0,T) \rightarrow (\N,G)$ be a solution
to the extended harmonic map heat flow $f_t = \Delta_{m,G} f$. In this section we do not assume that the initial map $f(\cdot,0)$ maps into $\M$.
Furthermore, let $\sigma: \N \rightarrow \mathbb{R}$ be an arbitrary smooth map on $\N$. We define
$$
	\eta := \sigma \circ f.
$$
For the time derivative we obtain
$$
	\eta_t = f^\alpha_t D_\alpha \sigma \circ f, 
$$
and the Laplacian is given by
\begin{align*}
	\Delta_m \eta &= m^{ij} ( \partial_i \partial_j \eta  - \Gamma(m)^k_{ij} \partial_k \eta )
\\	&= m^{ij} ( \partial_i (D_\alpha \sigma \circ f \, \partial_j f^\alpha) - \Gamma(m)^k_{ij} D_\alpha \sigma \circ f \, \partial_k f^\alpha )
\\  &= m^{ij} ( D_\beta D_\alpha \sigma \circ f \, \partial_i f^\beta \partial_j f^\alpha + D_\alpha \sigma \circ f \, \partial_i \partial_j f^\alpha
				- \Gamma(m)^k_{ij} D_\alpha \sigma \circ f \, \partial_k f^\alpha )
\\  &= m^{ij} \partial_i f^\beta \partial_j f^\alpha (D_\beta D_\alpha \sigma \circ f - \Gamma(G)_{\beta\alpha}^\gamma \circ f \, D_\gamma \sigma \circ f)
		+ \Delta_{m,G} f^\alpha D_\alpha \sigma \circ f  				
\\  &= m ( grad_m f^\beta,  grad_m f^\alpha) (\nabla^G_\alpha \nabla^G_\beta \sigma) \circ f
		+ \Delta_{m,G} f^\alpha D_\alpha \sigma \circ f,  			
\end{align*}
where $\nabla^G_\alpha \nabla^G_\beta \sigma$ denotes the Hessian of $\sigma$ with respect to the metric $G$. We conclude that $\eta$ satisfies the following 
reaction-diffusion equation
$$
	\eta_t - \Delta_m \eta = - m ( grad_m f^\beta,  grad_m f^\alpha) (\nabla^G_\alpha \nabla^G_\beta \sigma) \circ f.
$$
Next, we apply this result to the case that the target manifold is the standard $n$-sphere.

\subsubsection*{Example: Stability of the extended harmonic map heat flow for a spherical target manifold}	
\begin{lem}
Let $(\Gamma,m)$ be a smooth closed Riemannian manifold and $(\M,g)$ the standard $n$-sphere in $\mathbb{R}^{n+1}$ with extended metric $G(x) = \rho(x) \unit$ 
on $\mathbb{R}^{n+1} \setminus \{0 \}$.
Furthermore, let $f: \Gamma \times [0,T) \rightarrow \mathbb{R}^{n+1}\setminus \{ 0 \}$ be a solution to the extended harmonic map heat flow $f_t = \Delta_{m,G} f$.
Then, if $||f(\cdot, 0)| - 1| \leq \delta$ on $\Gamma$ for some $\delta \in [0,1)$, we have $||f| - 1| \leq \delta$ on $\Gamma \times [0,T)$. 
\end{lem}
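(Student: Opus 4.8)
The plan is to apply the reaction-diffusion identity derived just above to the single choice $\sigma(x):=|x|$, so that $\eta:=\sigma\circ f=|f|$ is a smooth function on $\Gamma\times[0,T)$ (smooth precisely because $f$ avoids the origin), and then to run a scalar maximum principle argument driven by the sign of the resulting reaction term. Writing $A^{\alpha\beta}:=m(grad_m f^\alpha, grad_m f^\beta)$ for the symmetric, positive semi-definite Gram matrix of the component gradients, the identity reads $\eta_t-\Delta_m\eta=-A^{\alpha\beta}(\nabla^G_\alpha\nabla^G_\beta\sigma)\circ f$, so everything hinges on the $G$-Hessian of $\sigma$.

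First I would compute that Hessian. Since $G=\rho\,\unit$ is conformally flat, its Christoffel symbols are $\Gamma(G)^\gamma_{\alpha\beta}=\tfrac{1}{2\rho}(D_\alpha\rho\,\delta^\gamma_\beta+D_\beta\rho\,\delta^\gamma_\alpha-D_\gamma\rho\,\delta_{\alpha\beta})$, and with $D_\alpha|x|=\nu_\alpha$ for $\nu=x/|x|$, together with $D_\alpha D_\beta|x|=|x|^{-1}(\delta_{\alpha\beta}-\nu_\alpha\nu_\beta)$ and $D_\alpha\rho=-2|x|^{-5}\nu_\alpha$, a short calculation yields
$$
	\nabla^G_\alpha\nabla^G_\beta\sigma=\left(\tfrac{1}{|x|}-\tfrac{|x|^{-5}}{\rho}\right)\delta_{\alpha\beta}+\left(-\tfrac{1}{|x|}+\tfrac{2|x|^{-5}}{\rho}\right)\nu_\alpha\nu_\beta.
$$
The crucial observation is that the $\nu\otimes\nu$ part drops out at a spatial extremum of $\eta$: since $grad_m\eta=\nu_\alpha\,grad_m f^\alpha$, the condition $grad_m\eta=0$ at an interior extremum forces $A^{\alpha\beta}\nu_\beta=m(grad_m f^\alpha,\nu_\beta\,grad_m f^\beta)=0$, and in particular $A^{\alpha\beta}\nu_\alpha\nu_\beta=0$. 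Hence at such a point only the isotropic part survives the contraction with $A$, and using $\rho=(|x|^4+1)/(2|x|^4)$ the surviving coefficient simplifies to $\tfrac{1}{|f|}\cdot\tfrac{|f|^4-1}{|f|^4+1}$, which has exactly the same sign as $|f|-1$. This is the analytic incarnation of the attractor property already observed for the radial ODE.

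Finally I would invoke Hamilton's trick for the Lipschitz functions $t\mapsto\max_\Gamma\eta(\cdot,t)$ and $t\mapsto\min_\Gamma\eta(\cdot,t)$ on the closed manifold $\Gamma$. At a spatial maximum where $|f|>1$ one has $\Delta_m\eta\le0$ and reaction term $\le0$, so $\max_\Gamma|f|$ is non-increasing whenever it exceeds $1$; symmetrically, at a spatial minimum where $|f|<1$ both $\Delta_m\eta\ge0$ and the reaction term are $\ge0$, so $\min_\Gamma|f|$ is non-decreasing whenever it lies below $1$. Combined with $||f(\cdot,0)|-1|\le\delta$ this traps $f$ in $1-\delta\le|f|\le1+\delta$ for all time, i.e. $||f|-1|\le\delta$; the hypothesis $\delta<1$ is exactly what guarantees $1-\delta>0$, keeping $f$ in $\mathbb{R}^{n+1}\setminus\{0\}$ where $G$ and $\sigma$ remain smooth so the argument never leaves its domain of validity. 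I expect the two main obstacles to be the careful bookkeeping in the conformal $G$-Hessian computation and the rigorous justification of the envelope step governing the time evolution of the extrema.
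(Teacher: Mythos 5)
Your proposal is correct and follows essentially the same route as the paper: the same reaction-diffusion identity for $\sigma\circ f$, the same computation of the conformal $G$-Hessian (your formula agrees with the paper's, which uses the trivially shifted $\sigma(x)=|x|-1$ and factors $|x|^4-1=(|x|-1)(1+|x|)(1+|x|^2)$ to put the equation in the form $\eta_t-\Delta_m\eta+\langle X,\mathrm{grad}_m\eta\rangle=-c\,\eta$ with $c\geq 0$), and a maximum principle to conclude. The only difference is in the last step, where the paper cites a ready-made parabolic maximum principle (Topping, Theorem 3.1.1) together with a short maximal-time argument keeping $f$ away from the origin, whereas you run Hamilton's trick directly at spatial extrema and use the vanishing of $\mathrm{grad}_m\eta$ there to discard the $\nu\otimes\nu$ part -- an equivalent and equally valid execution of the same idea.
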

\begin{proof}
We first consider the map $\sigma: \mathbb{R}^{n+1} \setminus \{0\} \rightarrow \mathbb{R}$ with $\sigma(x) = |x| - 1$. Then the first and second derivatives are
$D_\alpha \sigma(x) = \frac{x_\alpha}{|x|}$, and $D_\alpha D_\beta \sigma(x) = \frac{1}{|x|} \left( \delta_{\alpha\beta} - \frac{x_\alpha}{|x|} \frac{x_\beta}{|x|} \right)$.
For the Christoffel symbols of the metric (\ref{extended_sphere_metric}) we obtain
$\Gamma(G)^\gamma_{\alpha\beta}(x) = \tfrac{1}{2 \rho(x)} ( \delta_{\gamma \beta} D_\alpha \rho(x) + \delta_{\gamma \alpha} D_\beta \rho(x) - \delta_{\alpha \beta} D_\gamma \rho(x) )$
with $D_\alpha \rho(x) = - \frac{2 x_\alpha}{|x|^6}$. Hence, the Hessian of $\sigma$ is given by
\begin{align*}
\nabla^G_\alpha \nabla^G_\beta \sigma (x) &= D_\alpha D_\beta \sigma(x) - \Gamma(G)^\gamma_{\alpha \beta}(x) D_\gamma \sigma(x) 
\\
&= \tfrac{1}{|x|} \left( \delta_{\alpha\beta} - \tfrac{x_\alpha}{|x|} \tfrac{x_\beta}{|x|} \right) 
	- \tfrac{1}{2 \rho(x) |x|} (x_\beta D_\alpha \rho(x) + x_\alpha D_\beta \rho(x) - \delta_{\alpha \beta} x_\gamma D_\gamma \rho(x) )
\\
&=  \tfrac{1}{|x|} \left( \delta_{\alpha\beta} - \tfrac{x_\alpha}{|x|} \tfrac{x_\beta}{|x|} \right)  
	+ \tfrac{1}{\rho(x)|x|^5} \left( 2 \tfrac{x_\alpha}{|x|} \tfrac{x_\beta}{|x|} - \delta_{\alpha \beta} \right)
\\
&= \tfrac{1}{|x|} \left( \delta_{\alpha\beta} \tfrac{|x|^4 - 1}{|x|^4 + 1} + \tfrac{3 - |x|^4}{1 + |x|^4} \tfrac{x_\alpha}{|x|} \tfrac{x_\beta}{|x|} \right)
\\
&= \tfrac{1}{|x|(1 + |x|^4)} \left( \delta_{\alpha\beta} (1 + |x|)(1 + |x|^2) \sigma(x) + (3 - |x|^4) D_\alpha \sigma(x) D_\beta \sigma(x) \right).
\end{align*}
The map $\eta: \Gamma \rightarrow \mathbb{R}$ with $\eta := \sigma \circ f =  |f| - 1$ therefore satisfies
\begin{equation}
	\eta_t - \Delta_m \eta + \tfrac{3 - |f|^4}{|f|(1 + |f|^4)} m ( grad_m \eta,  grad_m \eta) = - \eta \, 
	\underbrace{\tfrac{(1 + |f|)(1 + |f|^2)}{|f|(1 + |f|^4)} m ( grad_m f^\alpha,  grad_m f^\alpha)}_{ \geq 0 }.
	\label{heat_equation_distance_function}
\end{equation}
Now, let $\varepsilon \geq 0$ be the maximal time for which $f$ maps into $\mathbb{R}^{n+1} \setminus B_{(1-\delta)/2}(0)$. Since $f$ is continuous, $\varepsilon$ has to be positive.
From the maximum principle, see, for example, Theorem 3.1.1 in \cite{Top06}, it follows that $|\eta| \leq \delta$ on $[0, \varepsilon)$. From the maximality of $\varepsilon$ we finally conclude that $\varepsilon = T$.
\end{proof}

\subsubsection*{Stability of the extended flow for isometrically embedded target manifolds}
The above result can be generalized to isometrically embedded target manifolds of co-dimension one in Euclidean spaces.
The crucial point in the proof of the following lemma is to use the signed distance function, 
which has already been used in the definition of the metric $G$ in (\ref{extended_metric_hypersurfaces}). 
\begin{lem}
Let $(\Gamma,m)$ be a smooth closed Riemannian manifold and $\M \subset \mathbb{R}^{n+1}$ an orientable, smooth closed hypersurface in $\mathbb{R}^{n+1}$.
Let $\N_T$ be a tubular neighbourhood of $\M$ of fixed width such that the decomposition $x = a(x) + d(x) Dd(x)$ with $a(x) \in \M$ is unique on $\N_T$.
Furthermore, let $G$ be the extended metric defined in (\ref{extended_metric_hypersurfaces}). 
If $f: \Gamma \times [0,T) \rightarrow \N_T$ is a solution to the flow $f_t = \Delta_{m,G} f$ 
satisfying $|d(f(\cdot,0))| \leq \delta$ on $\Gamma$, then $|d(f)| \leq \delta$ on $\Gamma \times [0,T)$.
\end{lem}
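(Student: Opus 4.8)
The plan is to reproduce, for the signed distance function, the computation carried out above for $\sigma = |x|-1$ in the spherical case. Concretely, I would take $\sigma := d$, the signed distance function to $\M$, which is smooth on $\N_T$, and set $\eta := \sigma \circ f = d \circ f$. By the general reaction-diffusion identity derived earlier in this section, $\eta$ satisfies
$$
\eta_t - \Delta_m \eta = - m(grad_m f^\beta, grad_m f^\alpha)\,(\nabla^G_\alpha \nabla^G_\beta d) \circ f .
$$
Everything hinges on computing the Hessian $\nabla^G_\alpha \nabla^G_\beta d$ of the distance function with respect to the extended metric $G$ of (\ref{extended_metric_hypersurfaces}) and showing that it has a sign-definite structure proportional to $d$.

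The core of the proof is this Hessian computation. Writing $H := D^2 d$ for brevity, the defining properties (\ref{properties_signed_distance_function}) give $|Dd| = 1$ and $H\,Dd = 0$, so $\nu := Dd$ is a null eigenvector of $H$ (and of $H^2, H^3$). Differentiating the relation $H_{\delta\epsilon}\nu_\epsilon = 0$ and using $D_\mu \nu_\epsilon = H_{\mu\epsilon}$ yields the Riccati-type identities $\nu_\delta D_\mu H_{\delta\beta} = -(H^2)_{\mu\beta}$ and $\nu_\delta D_\mu (H^2)_{\delta\beta} = -(H^3)_{\mu\beta}$, together with $\nu_\delta D_\delta H_{\alpha\beta} = -(H^2)_{\alpha\beta}$. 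Since $H\nu = 0$, the metric $G = \unit - 2dH + 2d^2 H^2$ satisfies $G\nu = \nu$, hence also $G^{-1}\nu = \nu$; this is what makes the contraction of the Christoffel symbols against $\nu$ tractable, because
$$
\Gamma(G)^\gamma_{\alpha\beta}\,\nu_\gamma = \tfrac{1}{2}\,\nu_\delta\left( D_\alpha G_{\delta\beta} + D_\beta G_{\delta\alpha} - D_\delta G_{\alpha\beta}\right).
$$
Substituting $DG$ and using the identities above, all terms carrying a factor $\nu_\mu$ drop out (they meet $H\nu = 0$) and the survivors collapse to $\Gamma(G)^\gamma_{\alpha\beta}\nu_\gamma = H_{\alpha\beta} - d\,(H^2)_{\alpha\beta}$. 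Therefore
$$
\nabla^G_\alpha \nabla^G_\beta d = D_\alpha D_\beta d - \Gamma(G)^\gamma_{\alpha\beta}\nu_\gamma = H_{\alpha\beta} - \left(H_{\alpha\beta} - d\,(H^2)_{\alpha\beta}\right) = d\,(H^2)_{\alpha\beta}.
$$

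With this in hand the conclusion is immediate: the reaction-diffusion equation becomes $\eta_t - \Delta_m \eta = -\eta\, c$, where $c := m(grad_m f^\beta, grad_m f^\alpha)\,(H^2)_{\alpha\beta}\circ f$ is nonnegative, being the Frobenius product $\mathrm{tr}(AB)$ of the two symmetric positive semidefinite matrices $A_{\alpha\beta} = m(grad_m f^\alpha, grad_m f^\beta)$ (a Gram matrix) and $B = H^2$. Note that, in contrast to the spherical case (\ref{heat_equation_distance_function}), no first-order gradient term survives. Applying the parabolic maximum principle (Theorem 3.1.1 in \cite{Top06}) to this linear equation with reaction coefficient of good sign shows that $\max_\Gamma \eta(\cdot,t)$ cannot increase while it is positive and $\min_\Gamma \eta(\cdot,t)$ cannot decrease while it is negative, whence $|\eta| \leq \delta$ is preserved on $[0,T)$; if one prefers, one can run the continuity and maximality argument of the spherical proof on the maximal subinterval on which $f$ stays in $\N_T$.

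I expect the Hessian computation of the second paragraph to be the main obstacle. The potential difficulty is the bookkeeping of the third-derivative terms $D_\mu H$ and $D_\mu (H^2)$ and confirming that, after contraction with $\nu$ via $G^{-1}\nu = \nu$, they cancel to leave exactly the clean form $d\,(H^2)_{\alpha\beta}$. The decisive structural facts forcing this cancellation are $H\nu = 0$ and the Riccati identity $\partial_\nu H = -H^2$, both consequences of $|Dd| = 1$.
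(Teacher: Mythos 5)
Your proposal is correct and follows essentially the same route as the paper: choose $\sigma = d$, invoke the reaction--diffusion identity, show that the Hessian of $d$ with respect to $G$ reduces to $d\,(D^2d)^2$, and conclude with the maximum principle; the paper relegates the Hessian computation to Lemma \ref{lemma_distance_function} in the Appendix, where it is carried out after contraction with the (symmetric) Gram matrix $m(grad_m f^\alpha, grad_m f^\beta)$ rather than as the pointwise tensor identity you derive, but both rest on the same facts $G^{-1}Dd = Dd$, $D^2d\,Dd=0$ and the identity obtained by differentiating $D_\gamma d\, D_\beta D_\gamma d = 0$. Your Riccati-identity bookkeeping checks out and yields the slightly stronger pointwise statement $\nabla^G_\alpha\nabla^G_\beta d = d\,(D^2d)^2_{\alpha\beta}$, which is a marginal refinement rather than a different method.
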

\begin{proof}
Choosing $\sigma = d$, we obtain that
$$
	m(grad_m f^\alpha, grad_m f^\beta) (\nabla^G_\alpha \nabla^G_\beta \sigma ) \circ f = m(grad_m f^\alpha, grad_m f^\beta) \,
		d \circ f \left( D_\alpha D_\gamma d D_\beta D_\gamma d \right) \circ f,
$$
see Lemma \ref{lemma_distance_function} in the Appendix for details. Hence, $\eta := d \circ f$ satisfies the equation 
$$
	\eta_t - \Delta_m \eta = - \eta \, \underbrace{m(grad_m f^\alpha, grad_m f^\beta) \left( D_\alpha D_\gamma d D_\beta D_\gamma d \right) \circ f}_{ \geq 0}. 
$$
The claim then follows again from the maximum principle.
\end{proof}
We leave the study of the stability of the extended flow for (not necessarily isometrically) embedded target manifolds of higher co-dimensions for future research.
The starting point for the general case could be to consider generalized (non-Euclidean) distance functions.

\subsection{Weak formulation of the extended harmonic map heat flow}
\label{section_weak_formulation_HMF}
In this subsection we derive a weak formulation of the extended flow (\ref{extended_HMF}) which is suitable for a finite element discretization.
\begin{lem}
\label{lemma_weak_formulation}
Let $(\Gamma,m)$ and $(\mathcal{N},G)$ be two smooth Riemannian manifolds, where $\Gamma$ is closed and $\mathcal{N} \subset \mathbb{R}^N$ is some open subset of $\mathbb{R}^N$. 
Furthermore, let $f: (\Gamma, m) \times [0,T) \rightarrow (\mathcal{N}, G)$ be a solution to (\ref{extended_HMF}), then
\begin{align*}
&\int_\Gamma G(f)(f_t, \psi) \, do + \int_\Gamma G_{\alpha \beta}(f) \, m(grad_m f^\alpha, grad_m \psi^\beta) \, do \\
&=  - \tfrac{1}{2} \int_\Gamma (D_\beta G_{\kappa \iota} \circ f) \,  m(grad_m f^\kappa, grad_m f^\iota) \psi^\beta \, do
\end{align*}
for all vector fields $\psi \in H^{1,2}(\Gamma, \mathbb{R}^N)$ and all $t \in (0,T)$.
\end{lem}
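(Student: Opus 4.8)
The plan is to start from the strong form $f_t = \Delta_{m,G} f$, test it against $\psi$ in the $G(f)$-inner product, and integrate by parts over $\Gamma$. First I would write the map Laplacian componentwise, separating the part that only sees the domain geometry from the Christoffel part of the target: using the definition of $\Delta_{m,G}$ together with $m^{ij}\partial_i F^\beta \partial_j F^\gamma = m(grad_m f^\beta, grad_m f^\gamma)$, the equation reads, for each Euclidean component $\alpha$,
$$
f_t^\alpha = \Delta_m f^\alpha + \Gamma(G)^\alpha_{\kappa\iota}\circ f \; m(grad_m f^\kappa, grad_m f^\iota),
$$
where $\Delta_m f^\alpha$ is the scalar Laplace--Beltrami operator applied to the component $f^\alpha$. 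Multiplying by $G_{\alpha\beta}(f)\psi^\beta$, summing over $\alpha,\beta$, and integrating over $\Gamma$ then produces on the left the term $\int_\Gamma G(f)(f_t,\psi)\,do$ and on the right two contributions, one from $\Delta_m f^\alpha$ and one from the Christoffel term.

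Next I would integrate the first contribution by parts. Since $\Gamma$ is closed, Green's formula gives $\int_\Gamma \phi\,\Delta_m f^\alpha\,do = -\int_\Gamma m(grad_m\phi, grad_m f^\alpha)\,do$ for $\phi = G_{\alpha\beta}(f)\psi^\beta$; this $\phi$ lies in $H^{1,2}(\Gamma)$ because $f$ is smooth and $\psi\in H^{1,2}$, so the identity is justified by density even though $\psi$ is only of class $H^1$. Expanding $grad_m\phi$ with the product rule and the chain rule $grad_m(G_{\alpha\beta}\circ f) = (D_\gamma G_{\alpha\beta}\circ f)\,grad_m f^\gamma$ produces exactly the stiffness term $\int_\Gamma G_{\alpha\beta}(f)\,m(grad_m f^\alpha, grad_m \psi^\beta)\,do$ that I want on the left, plus a leftover first-order term $-\int_\Gamma (D_\gamma G_{\alpha\beta}\circ f)\,\psi^\beta\,m(grad_m f^\gamma, grad_m f^\alpha)\,do$.

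It remains to show that this leftover term together with the Christoffel contribution collapses to the claimed right-hand side. Here I would insert the standard formula $\Gamma(G)^\alpha_{\kappa\iota} = \tfrac12 G^{\alpha\lambda}(D_\kappa G_{\lambda\iota} + D_\iota G_{\lambda\kappa} - D_\lambda G_{\kappa\iota})$ and use $G_{\alpha\beta}G^{\alpha\lambda} = \delta_\beta^\lambda$ to obtain $G_{\alpha\beta}\Gamma(G)^\alpha_{\kappa\iota} = \tfrac12(D_\kappa G_{\beta\iota} + D_\iota G_{\beta\kappa} - D_\beta G_{\kappa\iota})$. The crucial observation is that $m(grad_m f^\kappa, grad_m f^\iota)$ is symmetric in $\kappa,\iota$, so the first two terms contract to the single term $D_\kappa G_{\beta\iota}\circ f\,m(grad_m f^\kappa, grad_m f^\iota)$, which exactly cancels the leftover first-order term from the integration by parts, after relabeling dummy indices and using $G_{\alpha\beta}=G_{\beta\alpha}$. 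What survives is precisely $-\tfrac12\int_\Gamma (D_\beta G_{\kappa\iota}\circ f)\,m(grad_m f^\kappa, grad_m f^\iota)\,\psi^\beta\,do$, which is the asserted identity.

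The main obstacle is the index bookkeeping in this last step: one must track carefully which derivative indices are summed against the symmetric quantity $m(grad_m f^\kappa, grad_m f^\iota)$ and which is the free index carried by $\psi^\beta$, and then match the two first-order terms so that they cancel, leaving exactly the factor $-\tfrac12$. Apart from this purely algebraic cancellation, the only analytic point to verify is the legitimacy of the integration by parts for an $H^{1,2}$ test field, which follows from the smoothness of $f$ and the closedness of $\Gamma$.
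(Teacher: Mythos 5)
Your proposal is correct and follows essentially the same route as the paper: test the equation in the $G(f)$-inner product, integrate the second-order part by parts, peel off the $DG$ term via product and chain rule, and cancel it against the symmetric part of $G_{\alpha\beta}\Gamma(G)^\alpha_{\kappa\iota}=\tfrac12(D_\kappa G_{\beta\iota}+D_\iota G_{\beta\kappa}-D_\beta G_{\kappa\iota})$, leaving the $-\tfrac12 D_\beta G_{\kappa\iota}$ term. The only cosmetic difference is that the paper performs the integration by parts in local coordinates with a partition of unity, whereas you invoke Green's formula on the closed manifold directly; the algebraic core is identical.
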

\begin{proof}
We test equation (\ref{extended_HMF}) with $\psi$ using the Riemannian metric $G$ on $\mathcal{N}$ and integrate on $\Gamma$ with respect to the metric $m$. This leads to
$$
\int_\Gamma G(f)(f_t, \psi) \, do = \int_\Gamma G(f)(\Delta_{m,G} f, \psi) \, do. 
$$
Let $\{\xi_l \}_{l}$ be a partition of unity subordinate to an atlas for $\Gamma$. We then consider the terms 
\begin{align*}
& \int_\Gamma G(f)(\Delta_{m,G} f, \psi) \xi_l \, do 
\\
&= \int_\Omega G_{\alpha \beta}(F) m^{ij} ( \partial_i \partial_j F^\alpha - \Gamma(m)^k_{ij} \partial_k F^\alpha
	+ \Gamma(G)^\alpha_{\kappa \iota} \circ F \, \partial_i F^\kappa \partial_j F^\iota ) \Psi^\beta \Xi_l \sqrt{m} \, d\theta
\\
&=	- \int_\Omega m^{ij} \partial_j F^\alpha \partial_i (G_{\alpha \beta}(F) \Psi^\beta \Xi_l ) \sqrt{m} \, d\theta
	+ \int_\Omega G_{\alpha \beta}(F) m^{ij} \Gamma(G)^\alpha_{\kappa \iota} \circ F \, \partial_i F^\kappa \partial_j F^\iota \Psi^\beta \Xi_l \sqrt{m} \, d\theta
\\
&=	- \int_\Omega G_{\alpha \beta}(F) m^{ij} \partial_j F^\alpha \partial_i ( \Psi^\beta \Xi_l ) \sqrt{m} \, d\theta
	- \int_\Omega m^{ij} \partial_j F^\alpha D_\kappa G_{\alpha\beta}\circ F \, \partial_i F^\kappa \Psi^\beta \Xi_l \sqrt{m} \, d\theta
	\\
&\quad	+ \int_\Omega G_{\alpha \beta}(F) m^{ij} \Gamma(G)^\alpha_{\kappa \iota} \circ F \, \partial_i F^\kappa \partial_j F^\iota \Psi^\beta \Xi_l \sqrt{m} \, d\theta	
\end{align*}
\begin{align*}
&=	- \int_\Omega G_{\alpha \beta}(F) m^{ij} \partial_j F^\alpha \partial_i ( \Psi^\beta \Xi_l ) \sqrt{m} \, d\theta
	- \int_\Omega m^{ij} D_\kappa G_{\iota\beta}\circ F \, \partial_i F^\kappa \partial_j F^\iota \Psi^\beta \Xi_l \sqrt{m} \, d\theta
	\\
&\quad	+ \tfrac{1}{2} 
	\int_\Omega m^{ij} ( D_\kappa G_{\beta \iota} + D_{\iota} G_{\kappa \beta} - D_\beta G_{\kappa \iota}) \circ F \, \partial_i F^\kappa \partial_j F^\iota \Psi^\beta \Xi_l \sqrt{m} \, d\theta	
\\
&= 	- \int_\Omega G_{\alpha \beta}(F) m^{ij} \partial_j F^\alpha \partial_i ( \Psi^\beta \Xi_l ) \sqrt{m} \, d\theta
	- \tfrac{1}{2} \int_\Omega m^{ij} D_\beta G_{\kappa \iota} \circ F \, \partial_i F^\kappa \partial_j F^\iota \Psi^\beta \Xi_l \sqrt{m} \, d\theta,	
\end{align*}
where $F: \Omega \rightarrow \mathbb{R}^N$ and $\Xi_l: \Omega \rightarrow \mathbb{R}$ are the local coordinate functions on $\Omega$ of the map $f: \Gamma \rightarrow \mathcal{N}$
and the function $\xi_l: \Gamma \rightarrow \mathbb{R}$. We rewrite the above result using the tangential gradient with respect to $m$ on $\Gamma$, that is
\begin{align*}
\int_\Gamma G(f) (\Delta_{m,G} f, \psi) \xi_l \, do 
&= - \int_\Gamma G_{\alpha \beta}(f) \, m(grad_m f^\alpha, grad_m (\psi^\beta \xi_l)) \, do \\
&\quad  - \tfrac{1}{2} \int_\Gamma (D_\beta G_{\kappa \iota} \circ f) \,  m(grad_m f^\kappa, grad_m f^\iota) \psi^\beta \xi_l \, do.
\end{align*}
Summing up all terms from the partition of unity, we finaly obtain
\begin{align*}
&\int_\Gamma G(f)(f_t, \psi) \, do + \int_\Gamma G_{\alpha \beta}(f) \, m(grad_m f^\alpha, grad_m \psi^\beta) \, do \\
&=  - \tfrac{1}{2} \int_\Gamma (D_\beta G_{\kappa \iota} \circ f) \,  m(grad_m f^\kappa, grad_m f^\iota) \psi^\beta \, do.
\end{align*}
\end{proof}

\subsubsection*{Example: The weak formulation for the spherical target manifold}
We return to the case when $\M$ is the standard $n$-sphere in $\mathbb{R}^{n+1}$ with extended metric $G(x) = \rho(x) \unit$ defined in (\ref{extended_sphere_metric}).
Since $D_\beta G_{\kappa \iota} = D_\beta \rho (x) \delta_{\kappa \iota}$, the weak formulation of the extended harmonic map heat flow 
on a $d$-dimensional closed hypersurface $\Gamma \subset \mathbb{R}^{d+1}$ with metric induced from the ambient space then is
$$
\int_\Gamma f_t \cdot \psi \,\rho(f) \, do + \int_\Gamma \grad f : \grad \psi \, \rho(f) \, do = - \tfrac{1}{2} \int_\Gamma  D\rho (f) \cdot \psi \, |\grad f|^2 \, do,
\quad \forall \psi \in H^1(\Gamma, \mathbb{R}^{n+1}).
$$
Inserting $D\rho(x) = -2 \tfrac{x}{|x|^6}$, we obtain 
$$
\int_\Gamma f_t \cdot \psi \, \left( \tfrac{1}{2} + \tfrac{1}{2|f|^4} \right) \, do
+ \int_\Gamma \grad f : \grad \psi \, \left( \tfrac{1}{2} + \tfrac{1}{2|f|^4} \right) \, do = \int_\Gamma  f \cdot \psi \, \tfrac{|\grad f|^2}{|f|^6} \, do,
$$
for all $\psi \in H^1(\Gamma, \mathbb{R}^{n+1})$.

\section{Discretization}
\subsection{The surface finite element method}
\label{section_SFEM}
Throughout this section let $\Gamma \subset \mathbb{R}^{d+1}$ be an orientable $d$-dimensional closed hypersurface of class $C^{3}$, for $d \leq 3$.
$\Gamma$ is supposed to be approximated by a polyhedral hypersurface 
$$
	\Gamma_h = \bigcup_{T \in \mathcal{T}} T,
$$
which is the union of $d$-dimensional, non-degenerate simplices $T$ in $\mathcal{T}$, whose vertices sit on $\Gamma$.
The triangulation $\mathcal{T}$ is supposed to be admissible, which means that either two simplices of the triangulation have empty cross section or
the cross section is a sub-simplex of both. 
The maximal diameter of the simplices $T \in \mathcal{T}$ is denoted by $h$. We assume that the triangulation is such that the inner radii of the simplices $T$ are $\geq C_1 h$, 
where $C_1 > 0$ is some constant.
The tangential projection $P_h = \unit - \nu_h \otimes \nu_h$ and the tangential gradient $\gradh$ on $\Gamma_h$
can be defined piecewise on each simplex $T \in \mathcal{T}$. Note, however, that the unit normal $\nu_h$ is in general only well-defined up to multiplication by $-1$. 
We now assume that $\Gamma_h$ is contained in a tubular neighbourhood of $\Gamma$
with the property that for each point $x$ there is a unique point $a(x) \in \Gamma$ such that $x = a(x) + d(x) Dd(x)$ holds.
The restriction $a_{\Gamma_h}: \Gamma_h \rightarrow \Gamma$ of the projection $a$ to $\Gamma_h$ 
is supposed to be bijective. For a function $f$ on $\Gamma_h$ we define the lift $f^l$ onto $\Gamma$
by $f^l := f \circ a_{\Gamma_h}^{-1}$. The lift of a function $f$ on $\Gamma$ onto $\Gamma_h$ is denoted by $f_l := f \circ a_{\Gamma_h}$.
$\nu(x) := Dd(x)$ is the extension of the outward unit normal to a neighbourhood of $\Gamma$, which is constant in the normal direction.
$\mathcal{H}(x) := D^2 d(x)$ is the corresponding extension of the shape operator $\mathcal{H} = \grad \nu$ on $\Gamma$.
We set $P(x) := \unit - \nu(x) \otimes \nu(x)$ to be the extension of the tangential projection on $\Gamma$.
In this section a constant $C(\Gamma,f)$ depending on quantities such as $\Gamma$ and $f$ might change from line to line.

The following statement gives detailed information about the quality of the approximation of $\Gamma$ by $\Gamma_h$.
\begin{prop}[Geometric estimates]
\label{prop_geometric_estimates}
Let $d$ denote the oriented distance function to $\Gamma$, $\mu_h$ the ratio between the volume forms on $\Gamma_h$ and $\Gamma$,
and let $R_h := P (\unit - d \mathcal{H}) P_h (\unit - d \mathcal{H}) P$. Then the following estimates hold 
\begin{align*}
	& \| d \|_{L^\infty(\Gamma_h)} \leq C(\Gamma) h^2
\\
	& \| 1 - \mu_h \|_{L^\infty(\Gamma_h)} \leq C(\Gamma) h^2
\\
	& \| P - R_h \|_{L^\infty(\Gamma_h)} \leq C(\Gamma) h^2
\end{align*}
\end{prop}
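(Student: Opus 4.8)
The plan is to prove the three estimates in order, since each rests on the same geometric fact: the vertices of $\Gamma_h$ lie on $\Gamma$, and $\Gamma$ is $C^3$, so locally $\Gamma_h$ deviates from $\Gamma$ only at second order in $h$. First I would establish the distance bound $\|d\|_{L^\infty(\Gamma_h)} \leq C(\Gamma) h^2$. Fix a simplex $T \in \mathcal{T}$ with vertices $p_0, \ldots, p_d \in \Gamma$, and let $x \in T$. Since $d(p_j) = 0$ for each vertex and $Dd$ is Lipschitz on the tubular neighbourhood (because $\Gamma$ is $C^3$, so $d \in C^3$ and $D^2 d$ is bounded), a Taylor expansion of $d$ around any vertex gives $|d(x)| \leq \tfrac{1}{2} \|D^2 d\|_{L^\infty} \operatorname{diam}(T)^2 \leq C(\Gamma) h^2$, using $\operatorname{diam}(T) \leq h$. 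The key input is that $d$ vanishes to first order on the tangent plane through a vertex and the second-order term is controlled by the boundedness of the Weingarten map $\mathcal{H} = D^2 d$.

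For the volume-form ratio, I would write $\mu_h$ explicitly via the area element. The lift $a_{\Gamma_h}: \Gamma_h \to \Gamma$ moves a point $x$ by $-d(x) Dd(x)$, so $\mu_h$ can be expressed through the Jacobian of $a_{\Gamma_h}$, which involves factors of the form $(1 - d \kappa_i)$ with $\kappa_i$ the principal curvatures, together with the tilt between the normals $\nu_h$ and $\nu$. The estimate $\|d\|_{L^\infty} \leq C h^2$ from the first step immediately controls the curvature factors. The remaining contribution comes from the discrepancy $\nu_h - \nu$; here I would use that on each simplex the discrete normal differs from the exact normal by $O(h)$, but that this enters $\mu_h$ only through $|\nu_h - \nu|^2 = O(h^2)$ because the leading-order tilt contributes quadratically to the ratio of determinants (the first-order term drops out by orthogonality). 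Combining these yields $\|1 - \mu_h\|_{L^\infty(\Gamma_h)} \leq C(\Gamma) h^2$.

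For the third estimate I would unpack $R_h = P(\unit - d\mathcal{H}) P_h (\unit - d\mathcal{H}) P$ and compare it to $P$. Using $\|d\|_{L^\infty} \leq C h^2$ and boundedness of $\mathcal{H}$, the factors $(\unit - d\mathcal{H})$ equal $\unit$ up to an $O(h^2)$ error, so $R_h = P P_h P + O(h^2)$. It then remains to show $\|P - P P_h P\|_{L^\infty(\Gamma_h)} \leq C(\Gamma) h^2$. Writing $P_h = \unit - \nu_h \otimes \nu_h$ and $P = \unit - \nu \otimes \nu$, a direct expansion gives $P P_h P = P - P(\nu_h \otimes \nu_h)P$, and since $P\nu = 0$ one finds $P(\nu_h \otimes \nu_h)P = (P\nu_h)\otimes(P\nu_h)$, whose norm is $|P\nu_h|^2$. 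The crucial point is that $P\nu_h$, the tangential part of the discrete normal, is $O(h)$, so its square is $O(h^2)$. This last fact is the main obstacle: it is exactly the statement that the discrete normal deviates from the true normal only to first order, which must be derived from the interpolation properties of the simplices and the $C^3$ regularity of $\Gamma$. I expect this normal-approximation bound to be the technical heart of the proof, while the two preceding estimates then follow by the Taylor-expansion and Jacobian arguments sketched above. In all three steps I would cite the standard surface finite element estimates (as in the references for the SFEM, e.g. \cite{DDE05}) to supply the precise geometric bounds rather than rederiving them.
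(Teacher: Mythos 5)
The paper does not prove this proposition itself; it simply cites Dziuk (1988) and Heine (2004), so your sketch is in effect a reconstruction of the standard surface-finite-element geometric estimates contained in those references, and it is essentially correct: the second and third estimates are argued exactly as in the literature (the curvature factors $\unit - d\mathcal{H}$ are $\unit + O(h^2)$ once $\|d\|_{L^\infty}\leq Ch^2$ is known, the normal tilt enters $\mu_h$ through $\nu\cdot\nu_h = 1-\tfrac{1}{2}|\nu-\nu_h|^2$, and $P P_h P = P - (P\nu_h)\otimes(P\nu_h)$ with $|P\nu_h|=|P(\nu_h-\nu)|\leq C h$), and you correctly identify the bound $|\nu-\nu_h|\leq Ch$ as the technical core, for which the quasi-uniformity assumption (inner radius $\geq C_1 h$) is needed.

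The one step that does not work as literally written is the first estimate. A Taylor expansion of $d$ around a single vertex $p_0$ gives
$d(x) = Dd(p_0)\cdot(x-p_0) + O(|x-p_0|^2)$,
and the first-order term is not obviously $O(h^2)$: for $x\in T$ the vector $x-p_0$ lies in the simplex, not in the tangent plane of $\Gamma$ at $p_0$, so you cannot simply drop it. Two standard repairs: either observe that $d$ vanishes at all $d+1$ vertices of $T$, so its linear interpolant on $T$ is zero and $\|d\|_{L^\infty(T)} = \|d - I_h d\|_{L^\infty(T)} \leq C h^2 \|D^2 d\|_{L^\infty}$ by the usual interpolation estimate; or write $x-p_0=\sum_j\lambda_j(p_j-p_0)$ and use that each edge vector satisfies $\nu(p_0)\cdot(p_j-p_0)=O(|p_j-p_0|^2)=O(h^2)$ because $p_j\in\Gamma$ has second-order contact with the tangent plane at $p_0$. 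Your phrase about $d$ "vanishing to first order on the tangent plane" gestures at the second repair but does not supply it. With that point fixed, the proposal matches the content of the cited proofs.
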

\begin{proof}
See \cite{Dz88, He04}. Note that our definition of $\mu_h$ is the inverse of the corresponding quantity in those papers.
\end{proof}
A direct consequence of this result is the following equivalence of norms.
\begin{prop}[Equivalence of norms]
\label{prop_equivalence_of_norms}
Let $1 \leq p \leq \infty$. Then we have
\begin{align*}
 & \tfrac{1}{C(\Gamma)} \| f \|_{L^p(\Gamma_h)} \leq \| f^l \|_{L^p(\Gamma)} \leq C(\Gamma) \| f \|_{L^p(\Gamma_h)}
 \quad \textnormal{if $f \in L^p(\Gamma_h)$,}
 \\
 & \tfrac{1}{C(\Gamma)} \| \gradh f \|_{L^p(\Gamma_h)} \leq \| \grad f^l \|_{L^p(\Gamma)}  \leq C(\Gamma) \| \gradh f \|_{L^p(\Gamma_h)}
 \quad \textnormal{if $f \in W^{1,p}(T)$ for all $T \in \mathcal{T}$,}
 \\
 & \| (\gradh)^k f \|_{L^p(\Gamma_h)} \leq C(\Gamma) \sum_{j=1}^{k} \| (\grad)^j f^l \|_{L^p(\Gamma)}
 \quad \textnormal{if $f \in W^{k,p}(T)$ for all $T \in \mathcal{T}$.}
\end{align*}
\end{prop}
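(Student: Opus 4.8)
The plan is to reduce every assertion to the single change of variables furnished by the projection $a_{\Gamma_h}: \Gamma_h \to \Gamma$ and to control the geometric distortion it introduces by means of Proposition \ref{prop_geometric_estimates}. Since $a_{\Gamma_h}$ is bijective and $\gradh$ is defined only simplexwise, I would carry out all computations on a fixed simplex $T \in \mathcal{T}$ and reassemble at the end. The zeroth- and first-order bounds are genuine two-sided equivalences and rest on the volume-form ratio $\mu_h$ and on the matrix $R_h$ respectively, both of which Proposition \ref{prop_geometric_estimates} shows to be $O(h^2)$-close to their ideal values; the $k$-th order bound is only one-sided and is obtained by iterating the lift identity.

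For the $L^p$-equivalence I would use the transformation rule for the volume forms under $a_{\Gamma_h}$, which relates $\int_\Gamma |f^l|^p \, do$ and $\int_{\Gamma_h} |f|^p \, do$ through the factor $\mu_h^{\pm 1}$ for $1 \le p < \infty$. By Proposition \ref{prop_geometric_estimates} we have $\| 1 - \mu_h \|_{L^\infty(\Gamma_h)} \le C(\Gamma) h^2$, so for $h$ small enough $\mu_h$ and $\mu_h^{-1}$ are bounded above and below by fixed positive constants; taking $p$-th roots then yields the claimed two-sided bound. The endpoint $p = \infty$ is immediate from the bijectivity of the lift, since $\| f^l \|_{L^\infty(\Gamma)} = \| f \|_{L^\infty(\Gamma_h)}$.

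The first-order estimate is the heart of the proof. Starting from $f = f^l \circ a_{\Gamma_h}$ with $a(x) = x - d(x) Dd(x)$, differentiation gives $Da = (\unit - d\mathcal{H})P$ (using $|Dd| = 1$ and $\mathcal{H}\nu = 0$), whence the tangential chain rule on each simplex produces the identity $\gradh f = P_h (\unit - d\mathcal{H})\, \grad f^l \circ a_{\Gamma_h}$. Taking the Euclidean inner product of two such expressions and using that $\grad f^l$ and $\grad g^l$ are tangential to $\Gamma$ (so $P\grad f^l = \grad f^l$), one arrives at the exact relation $\gradh f \cdot \gradh g = \grad f^l \cdot R_h\, \grad g^l$ composed with $a_{\Gamma_h}$, where $R_h = P(\unit - d\mathcal{H})P_h(\unit - d\mathcal{H})P$ is precisely the symmetric matrix field of Proposition \ref{prop_geometric_estimates}. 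Since $\| P - R_h \|_{L^\infty(\Gamma_h)} \le C(\Gamma) h^2$ and $P$ acts as the identity on the tangent spaces of $\Gamma$, the symmetric form $v \mapsto v \cdot R_h v$ is, for $h$ small, uniformly equivalent to $v \mapsto |v|^2$ on tangent vectors; hence $|\gradh f|$ and $|\grad f^l| \circ a_{\Gamma_h}$ are pointwise equivalent. Combining this with the volume-form change of variables from the $L^p$-step gives the two-sided gradient estimate.

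Finally, for the higher-order bound I would iterate the first-order lift identity: each additional application of $\gradh$ to the transported gradient produces, besides the leading term $(\grad)^j f^l \circ a_{\Gamma_h}$, lower-order terms multiplied by tangential derivatives of the geometric maps $a$, $d$, $\nu$ and $\mathcal{H}$. These geometric factors are bounded by constants depending only on $\Gamma$ (through its curvature and higher derivatives), which explains both why the sum $\sum_{j=1}^{k}$ over all orders up to $k$ appears and why only a one-sided estimate is claimed. The main obstacle is the first-order step: one must derive the precise lift formula for $\gradh$ simplexwise, being careful that $\nu_h$ and hence $P_h$ are defined only up to sign, and then confirm the uniform invertibility of the geometric form on tangent vectors, which is exactly what the $h^2$-closeness of $R_h$ to $P$ in Proposition \ref{prop_geometric_estimates} provides.
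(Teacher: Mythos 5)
Your proposal is correct and is essentially the proof the paper intends: the paper gives no argument of its own beyond citing Dziuk and Demlow after remarking that the proposition is ``a direct consequence'' of the geometric estimates, and your route --- change of variables under $a_{\Gamma_h}$ controlled by $\mu_h$ for the $L^p$ bounds, the simplexwise identity $\gradh f = P_h(\unit - d\mathcal{H})\,\grad f^l\circ a_{\Gamma_h}$ leading to $\gradh f\cdot\gradh g = (\grad f^l\cdot R_h\,\grad g^l)\circ a_{\Gamma_h}$ (the same identity the paper later quotes in its consistency estimates), and iteration for the one-sided $k$-th order bound --- is precisely the standard argument underlying those citations. Two minor remarks: $P_h=\unit-\nu_h\otimes\nu_h$ is in fact independent of the sign of $\nu_h$, so no care is needed there, and the two-sided bounds require $h\leq h_0(\Gamma)$ so that $\mu_h$ and $R_h$ are uniformly close to $1$ and $P$, an assumption that is implicit in the paper's statement.
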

\begin{proof}
See \cite{Dem09,Dz88}.
\end{proof}
The finite element space $V_h$ is defined as $V_h := \{ \psi \in C^0(\Gamma_h): \textnormal{$\psi_{T}$ is affine for all $T \in \mathcal{T}$} \}$.
On $\Gamma_h$ the Lagrange interpolation operator $I_h: C^0(\Gamma_h) \rightarrow V_h$ is defined like in the flat space
and corresponding interpolation estimates hold.
We define the lifted interpolation operator $I^l_h: C^0(\Gamma) \rightarrow V_h^l := \{ \psi_h^l: \psi_h \in V_h\}$ by
$I_h^l f := (I_h f_l)^l$ and find the following interpolation estimates
\begin{align}
 & \| f - I_h^l f \|_{L^p} \leq C(\Gamma) h^2 \| f \|_{W^{2,p}}
 \label{interpolation_estimate_1}
 \\
 & \| \grad (f - I_h^l f) \|_{L^p} \leq C(\Gamma) h \| f \|_{W^{2,p}}
 \label{interpolation_estimate_2}
\end{align}
for all $f \in W^{2,p}(\Gamma)$, $2 \leq p \leq \infty$, see \cite{Dem09}.
\subsection{The discrete problems}
\label{section_discrete_problems}
We divide the maximal interval $[0,T)$ of existence into time steps of length $\tau > 0$ and write $f^m(\cdot) := f(\cdot, m\tau)$.
We use the time discretization for a linearization of the problem and propose the following scheme for the computation of
the harmonic map heat flow into the $n$-sphere $\mathbb{S}^n \subset \mathbb{R}^{n+1}$.
\begin{alg}
\label{algorithm_sphere}
Let $\Gamma_h \subset \mathbb{R}^{d+1}$ be a $d$-dimensional polyhedral hypersurface and $f^0_h \in (V_h)^{n+1}$ with $||f^0_h| - 1| < \delta$ on $\Gamma_h$
for some $\delta \in (0,1)$.
For all $m \in \mathbb{N}_0$ with $(m + 1)\tau < T$ find $f_h^{m+1} \in (V_h)^{n+1}$ such that 
\begin{align*}
&\tfrac{1}{\tau} \int_{\Gamma_h} (f^{m+1}_h - f^m_h) \cdot \psi_h \left( \tfrac{1}{2} + \tfrac{1}{2|f^m_h|^4} \right) \, do
+ \int_{\Gamma_h} \gradh f_h^{m+1} : \gradh \psi_h \left( \tfrac{1}{2} + \tfrac{1}{2|f^m_h|^4} \right) \, do
\\
&= \int_{\Gamma_h} f_h^m \cdot \psi_h \tfrac{|\gradh f_h^m|^2}{|f_h^m|^6} \, do \qquad \textnormal{for all $\psi_h \in (V_h)^{n+1}$}.
\end{align*}
\end{alg}

\begin{remark}
The above algorithm would obviously break down if $|f^m_h(p)| = 0$ in some point $p \in \Gamma_h$. However, due to the stability proved in Section \ref{section_stability_HMF}
and the initial condition $||f^0_h| - 1| < \delta$, this should not happen. Otherwise, it would be allowed to modify the metric $G$ defined in (\ref{extended_sphere_metric}) 
outside a neighbourhood of the target manifold, 
for example, by smoothly modifying the function $\rho(x)$ for $|x| < 1/2$. But since we never observed that $|f_h^m|$ became critically small in our experiments, 
a change of the metric $G$ does not seem to be necessary from a practical point of view.
\end{remark}

\begin{remark}
\label{remark_stationary_problem}
An important application of Algorithm \ref{algorithm_sphere} is the following problem. Find $f_h \in (V_h)^{n+1}$ 
with $||f_h| - 1| < \delta$ on $\Gamma_h$ for some $\delta \in (0,1)$ such that
\begin{align}
\label{stationary_problem}
\int_{\Gamma_h} \gradh f_h : \gradh \psi_h \left( \tfrac{1}{2} + \tfrac{1}{2|f_h|^4} \right) \, do
= \int_{\Gamma_h} f_h \cdot \psi_h \tfrac{|\gradh f_h|^2}{|f_h|^6} \, do \qquad \textnormal{for all $\psi_h \in (V_h)^{n+1}$}.
\end{align}
In order to compute an approximation to this problem, we solve the discrete flow in Algorithm \ref{algorithm_sphere} as long as
$$
	\max_{p_j \in V} \tfrac{1}{\tau} |f^{m+1}_h(p_j) - f_h^m(p_j)| > tol,
$$ 
where $V$ is the set of the mesh vertices of $\Gamma_h$ and $tol > 0$ is some chosen threshold.
\end{remark}

\begin{remark}
The idea for the general case is based on the discretization of the weak formulation in Lemma \ref{lemma_weak_formulation}.
Assume we have found an appropriate way to discretize $G \circ f$ and the derivative $DG \circ f$, then solving the equation 
\begin{align*}
&\tfrac{1}{\tau} \int_{\Gamma_h} G_{h \alpha \beta}(f_h^m)(f^{\alpha, m+1}_h - f^{\alpha, m}_h,\psi^\beta_h) \, do
+ \int_{\Gamma_h} G_{h\alpha \beta}(f_h^m) \gradh f_h^{\alpha, m+1} \cdot \gradh \psi_h^\beta \, do
\\
&= - \tfrac{1}{2} \int_{\Gamma_h} (D_\beta G_{\kappa \iota})_h (f_h^m) \gradh f_h^{\kappa,m} \cdot \gradh f_h^{\iota,m} \psi^\beta_h \, do \qquad \textnormal{for all $\psi_h \in (V_h)^{n+1}$}.
\end{align*}
would lead to a numerical scheme for the computation of the harmonic map heat flow for non-spherical target manifolds $\M \subset \mathbb{R}^{n+1}$. 
Possible choices are $G_h(f_h^m) := G(f_h^m)$ or $G_h(f_h^m) := I_h (G(f_h^m))$ and respectively,
$(DG)_h(f^m_h) := DG(f^m_h)$ or $(DG)_h(f^m_h) := I_h (DG(f^m_h))$.
\end{remark}
Since the numerical analysis of the general case is beyond the scope of this paper, we will henceforward restrict to the case of spherical target manifolds.

\subsection{Numerical analysis}
\label{section_numerical_analysis}
The harmonic map heat flow is often considered in order to obtain a harmonic map. The basic idea, which we also followed in this paper, see Remark \ref{remark_stationary_problem}, 
is to study the limit of the flow as $t$ tends to infinity.
For a numerical scheme based on a discretization of the flow, this means that two different kinds of limits are involved -- one with respect to the discretization,
that is with respect to the mesh size $h$ and the time step size $\tau$, and the other with respect to the evolving time $t$. Therefore, two different numerical analysis problems
arise depending on which limit is considered first. The study of $h,\tau \searrow 0$ for a fixed time interval $[0,t]$ leads to the numerical analysis of the flow and 
relevant error estimates would typically depend on the time interval $[0,t]$. It is usually unclear how to control these estimates as $t$ tends to infinity.
On the other hand, letting $t \rightarrow \infty$ first (whilst keeping $h$ fixed) and $h \searrow 0$ afterwards leads to the numerical analysis of the stationary problem.
Although, in a numerical experiment one actually never really reaches the limit of such a process, the experiment is usually better described by the latter scenario,
since in a typical experiment the mesh size $h$ and time step size $\tau$ is fixed (apart from changes due to adaptivity), while the computation is run as long as the velocity
is above a certain threshold. For this reason we will here study the numerical analysis of the stationary problem 
and leave the numerical analysis of the flow open for further research. For the stationary problem we face two different questions.
First, does a sequence of discrete solutions converge to a harmonic map as $h \searrow 0$ and secondly, is each harmonic map approximated by a sequence of discrete solutions
(recovery problem)? In this paper only the latter problem will be discussed. 

The numerical analysis in the following subsection is similar to the work in \cite{St14}, 
where the author proved the existence of a recovery sequence for a numerical scheme of the harmonic map equation.
The main difference is that in \cite{St14} the constraints of the harmonic map equation are formulated with the help of Lagrange multipliers which leads to a saddle point problem, 
whereas we consider an unconstrained problem. We therefore do not have to check a Babu\v{s}ka-Brezzi condition, which was a significant part of the proof in \cite{St14}.
However, we will also make use of a quantitative version of the Inverse Function Theorem, which in \cite{DH99} was used for the numerical analysis of the discrete Plateau problem.
The proof of Lemma \ref{b_decomposition} is inspired by the proof of Theorem 3.2 in \cite{HTW09}, where a saddle point problem similar to that in \cite{St14} is studied. 
  
\subsubsection{The stationary case}
The aim of this section is to prove that for every harmonic map $f: \Gamma \rightarrow \mathbb{S}^1 \subset \mathbb{R}^2$ 
there exists a recovery sequence of discrete solutions to (\ref{stationary_problem}) that converges to $f$ as $h \searrow 0$.
We first quote the following quantitative version of the Inverse Function Theorem, see Lemma 5.1 in \cite{DH99}, which lies at the heart of our analysis. 
\begin{lem}
\label{lemma_inverse_function_theorem}
Let $\mathcal{X}$ be an affine Banach space with Banach space $\hat{X}$ as tangent space, and let $Y$ be a Banach space. Suppose $x_0 \in \mathcal{X}$
and $\mathcal{F} \in C^1(\mathcal{X},Y)$. Assume there are positive constants $\alpha, \beta, \delta$ and $\varepsilon$ such that
\begin{align*}
& \| \mathcal{F}(x_0) \|_Y \leq \delta, \\
& \| \mathcal{F}'(x_0)^{-1} \|_{L(Y,\hat{X})} \leq \alpha^{-1}, \\
& \| \mathcal{F}'(x) - \mathcal{F}'(x_0) \|_{L(\hat{X},Y)} \leq \beta \quad \textnormal{for all $x \in \overline{B}_\varepsilon (x_0)$,}
\end{align*}
where $\beta < \alpha$ and $\delta \leq (\alpha - \beta) \varepsilon$. Then there exists a unique $x_\ast \in \overline{B}_\varepsilon(x_0)$ such that $\mathcal{F}(x_\ast) = 0$.
\end{lem}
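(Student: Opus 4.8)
The plan is to realize the zero $x_\ast$ of $\mathcal{F}$ as the fixed point of a Newton-type map and then invoke the Banach fixed point theorem. Concretely, I would define
$$
T(x) := x - \mathcal{F}'(x_0)^{-1}\mathcal{F}(x), \qquad x \in \overline{B}_\varepsilon(x_0).
$$
Since $\mathcal{F}'(x_0)^{-1} \in L(Y,\hat{X})$ takes values in the tangent space $\hat{X}$, the correction $\mathcal{F}'(x_0)^{-1}\mathcal{F}(x)$ is a genuine tangent vector, so $T(x)$ is again a point of the affine space $\mathcal{X}$ and $T$ is well defined. Because $\mathcal{F}'(x_0)^{-1}$ is a linear isomorphism onto its image, a point $x$ is a fixed point of $T$ if and only if $\mathcal{F}'(x_0)^{-1}\mathcal{F}(x) = 0$, i.e.\ if and only if $\mathcal{F}(x) = 0$. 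Thus it suffices to produce a unique fixed point of $T$ in $\overline{B}_\varepsilon(x_0)$.

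First I would show that $T$ is a contraction with constant $\beta/\alpha$. For $x,y \in \overline{B}_\varepsilon(x_0)$ I would use the mean-value/fundamental theorem of calculus for the $C^1$ map $\mathcal{F}$ along the segment from $y$ to $x$, which lies in the convex ball, to write
$$
\mathcal{F}(x) - \mathcal{F}(y) = \int_0^1 \mathcal{F}'(y + t(x-y)) (x - y)\, dt,
$$
and hence
$$
T(x) - T(y) = \mathcal{F}'(x_0)^{-1} \int_0^1 \bigl( \mathcal{F}'(x_0) - \mathcal{F}'(y + t(x-y)) \bigr)(x-y)\, dt.
$$
Taking norms and using $\|\mathcal{F}'(x_0)^{-1}\|_{L(Y,\hat{X})} \leq \alpha^{-1}$ together with the bound $\|\mathcal{F}'(z) - \mathcal{F}'(x_0)\|_{L(\hat{X},Y)} \leq \beta$ for every $z = y + t(x-y) \in \overline{B}_\varepsilon(x_0)$ yields $\|T(x) - T(y)\| \leq (\beta/\alpha)\|x-y\|$. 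Since $\beta < \alpha$, this is a strict contraction.

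Next I would verify that $T$ maps $\overline{B}_\varepsilon(x_0)$ into itself, which is exactly where the hypothesis $\delta \leq (\alpha-\beta)\varepsilon$ is used. Writing $\|T(x) - x_0\| \leq \|T(x) - T(x_0)\| + \|T(x_0) - x_0\|$, the contraction estimate bounds the first term by $(\beta/\alpha)\varepsilon$, while $T(x_0) - x_0 = -\mathcal{F}'(x_0)^{-1}\mathcal{F}(x_0)$ is bounded by $\alpha^{-1}\delta \leq \alpha^{-1}(\alpha-\beta)\varepsilon$; adding the two contributions gives exactly $\varepsilon$, so $T(x) \in \overline{B}_\varepsilon(x_0)$. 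As $\overline{B}_\varepsilon(x_0)$ is a closed, hence complete, subset of $\mathcal{X}$, the Banach fixed point theorem produces a unique fixed point $x_\ast$, and by the equivalence noted above this $x_\ast$ is the unique zero of $\mathcal{F}$ in the ball.

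The computations are routine; the only points that need care are the Banach-space integral representation of $\mathcal{F}(x) - \mathcal{F}(y)$ (valid for $\mathcal{F} \in C^1$, with the integral read as a Riemann/Bochner integral of a continuous $Y$-valued map), the bookkeeping that $T$ genuinely respects the affine-over-$\hat{X}$ structure so that differences live in $\hat{X}$ and are measured there, and the completeness of the closed ball required by the fixed point theorem. I expect the self-mapping step, where the sharp constant balance $\delta \leq (\alpha-\beta)\varepsilon$ is exploited, to be the most delicate part to state cleanly.
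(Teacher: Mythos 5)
Your proof is correct. The paper itself does not prove this lemma -- it only quotes it as Lemma 5.1 of \cite{DH99} -- and your argument (the simplified Newton map $T(x)=x-\mathcal{F}'(x_0)^{-1}\mathcal{F}(x)$, the contraction estimate with constant $\beta/\alpha$ via the fundamental theorem of calculus along the segment, the self-mapping of $\overline{B}_\varepsilon(x_0)$ from $\delta\leq(\alpha-\beta)\varepsilon$, and the Banach fixed point theorem together with the injectivity of $\mathcal{F}'(x_0)^{-1}$ to identify fixed points with zeros) is precisely the standard proof of this quantitative Inverse Function Theorem, matching the cited source.
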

Now, the idea is to apply this theorem to the first variation of the discrete Dirichlet energy
$$
	E_h(f_h) := \frac{1}{2} \int_{\Gamma_h} |\gradh f_h|^2 \left( \tfrac{1}{2} + \tfrac{1}{2 |f_h|^4} \right) \, do,
$$
where $f_h \in (V_h)^{n+1}$. 
For functions with $f_h(x) = 0$ in some point $x \in \Gamma_h$ there are ways to change the energy $E_h$ in a suitable way without any consequences for the statements below.
We fully ignore this problem here, since such functions do not occur in our analysis. 
We define $E'_h(f_h)(\psi_h) := \frac{d}{d\varepsilon} E_h(f_h + \varepsilon \psi_h) \big|_{\varepsilon = 0} $
and $E''_h(f_h)(\psi_h, \psi_h) := \frac{d^2}{d\varepsilon^2} E_h(f_h + \varepsilon \psi_h) \big|_{\varepsilon = 0}$ for all $\psi_h \in (V_h)^{n+1}$.
A short calculation shows that
\begin{align}
	&E'_h(f_h)(\psi_h)
	= \int_{\Gamma_h} \gradh f_h : \gradh \psi_h \left( \tfrac{1}{2} + \tfrac{1}{2 |f_h|^4} \right) \, do 
 		- \int_{\Gamma_h} | \gradh f_h|^2 \, \tfrac{f_h \cdot \psi_h}{|f_h|^6} \, do,
 		\label{first_variation_discrete_energy}
\\		
	&E''_h(f_h)(\psi_h, \psi_h) = \int_{\Gamma_h} |\gradh \psi_h|^2 \left( \tfrac{1}{2} + \tfrac{1}{2 |f_h|^4} \right) \, do
			- 4 \int_{\Gamma_h} \gradh f_h : \gradh \psi_h \, \tfrac{f_h \cdot \psi_h}{|f_h|^6} \, do
\nonumber			
\\ &\hspace*{2.8cm}
			- \int_{\Gamma_h} | \gradh f_h|^2 \, \tfrac{|\psi_h|^2}{|f_h|^6} \, do + 6 \int_{\Gamma_h} | \gradh f_h |^2 \, \tfrac{[f_h \cdot \psi_h]^2}{|f_h|^8} \, do.
		\label{second_variation_discrete_energy}	
\end{align}
The first and second variations of the Dirichlet energy
$$
	E(f) = \frac{1}{2} \int_\Gamma |\grad f|^2 \left( \tfrac{1}{2} + \tfrac{1}{2|f|^4} \right) \, do
$$
for maps $f: \Gamma \rightarrow \mathbb{R}^{n+1} \setminus \{ 0 \}$ are given by
\iffalse
\begin{align*}
 \frac{d}{d\varepsilon} E(f + \varepsilon \psi) = \int_\Gamma \grad (f + \varepsilon \psi) : \grad \psi \left( \tfrac{1}{2} + \tfrac{1}{2 |f + \varepsilon \psi|^4} \right) \, do 
 	- \int_\Gamma | \grad (f + \varepsilon \psi )|^2 \, \tfrac{(f + \varepsilon \psi) \cdot \psi}{|f + \varepsilon \psi|^6} \, do,
\end{align*}
\begin{align*}
	\frac{d^2}{d\varepsilon^2} E(f + \varepsilon \psi) &= \int_\Gamma |\grad \psi|^2 \left( \tfrac{1}{2} + \tfrac{1}{2 |f + \varepsilon \psi|^4} \right) \, do
		- 4 \int_\Gamma \grad (f + \varepsilon \psi) : \grad \psi \, \tfrac{(f + \varepsilon \psi) \cdot \psi}{|f + \varepsilon \psi|^6} \, do
\\		
		&\quad - \int_\Gamma | \grad (f + \varepsilon \psi)|^2 \, \tfrac{|\psi|^2}{|f + \varepsilon \psi|^6} \, do
		+ 6 \int_\Gamma | \grad (f + \varepsilon \psi) |^2 \, \tfrac{[(f + \varepsilon \psi) \cdot \psi]^2}{|f + \varepsilon \psi|^8} \, do.
\end{align*}
\fi 
\begin{align}
	& \frac{d}{d\varepsilon} E(f + \varepsilon \psi) \big|_{\varepsilon = 0} 
	= \int_\Gamma \grad f : \grad \psi \left( \tfrac{1}{2} + \tfrac{1}{2 |f|^4} \right) \, do 
 		- \int_\Gamma | \grad f|^2 \, \tfrac{f \cdot \psi}{|f|^6} \, do,
\\		
	& \frac{d^2}{d\varepsilon^2} E(f + \varepsilon \psi) \big|_{\varepsilon = 0} = \int_\Gamma |\grad \psi|^2 \left( \tfrac{1}{2} + \tfrac{1}{2 |f|^4} \right) \, do
			- 4 \int_\Gamma \grad f : \grad \psi \, \tfrac{f \cdot \psi}{|f|^6} \, do
			- \int_\Gamma | \grad f|^2 \, \tfrac{|\psi|^2}{|f|^6} \, do
\nonumber			
\\ &\hspace*{3.2cm}
			+ 6 \int_\Gamma | \grad f |^2 \, \tfrac{[f \cdot \psi]^2}{|f|^8} \, do.
	\label{second_variation_continuous_case}			
\end{align}
Now, let $n=1$ and $f: \Gamma \rightarrow \mathbb{S}^1 \subset \mathbb{R}^2$ be a harmonic map into the $1$-sphere. Then we have
$$
	\frac{d^2}{d\varepsilon^2} E(f + \varepsilon \psi) \big|_{\varepsilon = 0} = \int_\Gamma |\grad \psi|^2 - 4 \grad f : \grad \psi \, (f \cdot \psi) - | \grad f|^2 \, |\psi|^2 
					+ 6 | \grad f |^2 \, [f \cdot \psi]^2 \, do.
$$
We define the bilinear form $b: (H^1(\Gamma))^2 \times (H^1(\Gamma))^2 \rightarrow \mathbb{R}$ by
\begin{equation}
\label{b_definition}
	b(\psi, \psi) := \int_\Gamma |\grad \psi|^2 - 4 \grad f : \grad \psi \, (f \cdot \psi) - | \grad f|^2 \, |\psi|^2 
					+ 6 | \grad f |^2 \, [f \cdot \psi]^2 \, do
\end{equation}
and polarization. We decompose $\psi$ as $\psi = \psi_\nu f + \psi_\tau f^\perp$ with $(f^\perp_1, f^\perp_2) := (- f_2, f_1)$,
that is $\psi_\nu = \psi \cdot f$ and $\psi_\tau = \psi \cdot f^\perp$. We obtain
$$
	b(\psi, \psi) = \int_\Gamma |\grad \psi_\nu |^2 + |\grad \psi_\tau |^2 \, do + 2 \int_\Gamma |\grad f|^2 \psi_\nu^2 \, do,
$$
see Lemma \ref{b_decomposition} in the Appendix.

If $E(f) = 0$ then $f$ is  clearly constant. In this case, the lift $f_l$ is a solution to the discrete problem and nothing has to be done.
In the following we therefore assume that $E(f) \neq 0$, that is $\int_\Gamma |\grad f|^2 \, do \neq 0$.  
We will now show that the bilinear form $b$ restricted to the subspace 
$$
	X := \left\{ \psi \in (H^1(\Gamma))^2 : \int_\Gamma \psi_\tau \, do = 0 \right\}
$$
of $(H^1(\Gamma))^2$ is continuous and coercive with respect to the usual $\| \cdot \|_{H^1}$-norm on $\Gamma$.
\begin{lem}
\label{lemma_coercivity}
Let $f: \Gamma \rightarrow \mathbb{S}^1 \subset \mathbb{R}^2$ be a smooth harmonic map on the closed hypersurface $\Gamma \subset \mathbb{R}^{d+1}$.
The bilinear form $b: X \times X \rightarrow \mathbb{R}$ satisfies
\begin{align*}
& b(\psi, \psi) \leq C(\Gamma, f) \|\psi \|_{H^1}^2 \quad \forall \psi \in X,
\\
& b(\psi, \psi) \geq \lambda(\Gamma, f) \|\psi \|_{H^1}^2 \quad \forall \psi \in X,
\end{align*}
for some $C(\Gamma, f), \lambda(\Gamma, f) > 0$.
\end{lem}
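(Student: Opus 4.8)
The plan is to prove the two inequalities separately, using throughout the decomposition
\[
b(\psi,\psi) = \int_\Gamma |\grad \psi_\nu|^2 + |\grad \psi_\tau|^2 \, do + 2 \int_\Gamma |\grad f|^2 \psi_\nu^2 \, do
\]
from Lemma \ref{b_decomposition}, together with the fact that $\{f, f^\perp\}$ is a pointwise orthonormal frame of $\mathbb{R}^2$ (since $|f|=1$), so that $|\psi|^2 = \psi_\nu^2 + \psi_\tau^2$ pointwise, and that $\grad f$, $\grad f^\perp$ are bounded because $f$ is smooth on the compact manifold $\Gamma$.

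For the continuity bound I would work directly from the definition (\ref{b_definition}) and estimate its four terms by Cauchy--Schwarz, using $|f|=1$ and $\|\grad f\|_{L^\infty(\Gamma)} < \infty$: the gradient term is controlled by $\|\grad\psi\|_{L^2}^2$, the cross term by $C\|\grad\psi\|_{L^2}\|\psi\|_{L^2}$, and the two zeroth-order terms by $C\|\psi\|_{L^2}^2$, so that $b(\psi,\psi) \leq C(\Gamma,f)\|\psi\|_{H^1}^2$. This part is routine.

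The coercivity is the substantial part, and I would obtain it by controlling the tangential and normal components separately and reassembling. For the tangential part, membership $\psi \in X$, that is $\int_\Gamma \psi_\tau\, do = 0$, lets me invoke the Poincar\'e--Wirtinger inequality on the closed manifold $\Gamma$ to get $\|\psi_\tau\|_{L^2}^2 \leq C_P\|\grad\psi_\tau\|_{L^2}^2$, hence $\|\psi_\tau\|_{H^1}^2 \leq C\|\grad\psi_\tau\|_{L^2}^2$. For the normal part this constraint is unavailable, and here the weighted term $2\int_\Gamma |\grad f|^2 \psi_\nu^2\, do$ is essential. Because $E(f)\neq 0$, the nonnegative smooth weight $|\grad f|^2$ has $\int_\Gamma |\grad f|^2\, do > 0$, so I would establish the weighted Poincar\'e-type estimate $\|\psi_\nu\|_{H^1}^2 \leq C\bigl(\|\grad \psi_\nu\|_{L^2}^2 + \int_\Gamma |\grad f|^2\psi_\nu^2\, do\bigr)$ by a compactness argument: a normalized sequence driving the right-hand side to zero would, by Rellich, converge in $L^2$ to a constant $u$ with $\int_\Gamma |\grad f|^2 u^2\, do = 0$, forcing $u=0$ and contradicting normalization.

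Finally I would reassemble the estimates. The two bounds give $b(\psi,\psi) \geq c\bigl(\|\psi_\nu\|_{H^1}^2 + \|\psi_\tau\|_{H^1}^2\bigr)$, and since $\|\psi\|_{L^2}^2 = \|\psi_\nu\|_{L^2}^2 + \|\psi_\tau\|_{L^2}^2$ while the product rule applied to $\psi = \psi_\nu f + \psi_\tau f^\perp$ expresses $\grad\psi$ as a sum of the terms $(\grad\psi_\nu)f$, $\psi_\nu\grad f$, $(\grad\psi_\tau)f^\perp$, $\psi_\tau\grad f^\perp$, boundedness of $\grad f$, $\grad f^\perp$ yields $\|\grad\psi\|_{L^2}^2 \leq C\bigl(\|\psi_\nu\|_{H^1}^2 + \|\psi_\tau\|_{H^1}^2\bigr)$. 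Hence $\|\psi\|_{H^1}^2 \leq C\bigl(\|\psi_\nu\|_{H^1}^2 + \|\psi_\tau\|_{H^1}^2\bigr)$ and therefore $b(\psi,\psi) \geq \lambda(\Gamma,f)\|\psi\|_{H^1}^2$. The main obstacle is precisely the normal component: since $|\grad f|^2$ may vanish on part of $\Gamma$, the weighted term cannot control $\|\psi_\nu\|_{L^2}$ on its own, and it is its combination with the gradient term through the compactness argument — made possible exactly by $E(f)\neq 0$ — that closes the estimate.
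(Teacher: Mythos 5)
Your proof is correct and follows essentially the same route as the paper: the decomposition from Lemma \ref{b_decomposition}, the Poincar\'e inequality for $\psi_\tau$ via the zero-mean constraint defining $X$, and a contradiction argument for the normal component that exploits the standing assumption $\int_\Gamma |\grad f|^2 \, do \neq 0$. The only immaterial difference is that you establish the weighted Poincar\'e inequality for $\psi_\nu$ directly via Rellich compactness, whereas the paper reduces the issue to controlling the mean value $\overline{\psi}_\nu$ and runs the contradiction on a sequence normalized by $\overline{u}_k = 1$, needing only the standard Poincar\'e inequality.
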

\begin{proof}
We have
\begin{align*}
	b(\psi, \psi) &= \int_\Gamma |\grad \psi_\nu|^2 + |\grad \psi_\tau|^2 \, do + 2 \int_\Gamma |\grad f|^2 \psi_\nu^2 \, do
	\\
	&\leq \| \grad \psi_\nu \|_{L^2}^2 + \| \grad \psi_\tau \|_{L^2}^2 + C(f) \|\psi_\nu \|_{L^2}^2
	\\
	&\leq C(f) \| \psi_\nu \|_{H^1}^2 + \| \psi_\tau \|_{H^1}^2. 
\end{align*}
Furthermore, it is easy to show that
$$
	\tfrac{1}{C(f)} \| \psi \|_{H^1} \leq \| \psi_\nu \|_{H^1} + \| \psi_\tau \|_{H^1} \leq C(f) \| \psi \|_{H^1}.
$$
This proves the continuity. The Poincar\'e inequality gives
\begin{align*}
	\| \psi \|_{H^1} &\leq C(f) (\| \psi_\nu \|_{H^1} + \| \psi_\tau \|_{H^1} )
	\\
	&\leq C(\Gamma, f) ( \| \overline{\psi}_\nu \|_{L^2}  + \| \grad \psi_\nu \|_{L^2} + \| \grad \psi_\tau \|_{L^2} ),
\end{align*}
where $\overline{\psi}_\nu := \tfrac{1}{|\Gamma|} \int_\Gamma \psi_\nu \, do$ is the mean value of $\psi_\nu$.
Since we assume that $\int_\Gamma |\grad f|^2 \, do \neq 0$, it follows that $\|\overline{\psi}_\nu \|^2_{L^2} = C(\Gamma, f) \int_\Gamma |\grad f|^2 (\overline{\psi}_\nu)^2 \, do$.
It therefore remains to show that there is some constant $C \geq 0$ such that
$$
	\int_\Gamma |\grad f|^2 (\overline{\psi}_\nu)^2 \, do \leq C \left( 2 \int_\Gamma |\grad f|^2 \psi_\nu^2 \, do + \int_\Gamma |\grad \psi_\nu|^2 \, do \right).
$$
If the statement was wrong, there would be a sequence $u_k \in H^1(\Gamma)$ such that
$$
	\int_\Gamma |\grad f|^2 (\overline{u}_k)^2 \, do > k \left( 2 \int_\Gamma |\grad f|^2 u_k^2 \, do + \int_\Gamma |\grad u_k|^2 \, do \right).
$$
Without loss of generality we can assume that the mean value $\overline{u}_k$ of $u_k$ satisfies $\overline{u}_k = 1$ for all $k \in \mathbb{N}$.
Hence,
\begin{equation*}
	2 \int_\Gamma |\grad f|^2 u_k^2 \, do + \int_\Gamma |\grad u_k|^2 \, do \rightarrow 0 \quad \textnormal{for $k \rightarrow \infty$},
\end{equation*} 
and in particular,
\begin{equation*}
	\| u_k - 1 \|_{H^1} = \| u_k - \overline{u}_k \|_{H^1} \leq C(\Gamma) \| \grad u_k \|_{L^2} \rightarrow 0 \quad \textnormal{for $k \rightarrow \infty$}.
\end{equation*} 
That is $u_k \rightarrow 1$ in $H^1(\Gamma)$. This implies that
\begin{align*}
 0 = \lim_{k \rightarrow \infty} \int_\Gamma |\grad f|^2 u_k^2 \, do  = \int_\Gamma |\grad f|^2 \, do \neq 0.
\end{align*}
This is a contradiction and it finally follows that
$$
	\| \psi \|_{H^1}^2 \leq C(\Gamma, f) \left( 2 \int_\Gamma |\grad f|^2 \psi_\nu^2 \, do + \int_\Gamma |\grad \psi_\nu |^2 + |\grad \psi_\tau |^2 \, do \right)
	= C(\Gamma, f) b(\psi, \psi).
$$
\end{proof}
\iffalse
$$
	\| \psi_h^l \|_{X_h^l} := \left( \int_\Gamma |\grad (\psi_h^l)_\nu|^2 + |\grad (\psi_h^l)_\tau|^2 \, do 
	+ \int_\Gamma |\grad f|^2 (\psi_h^l)_\nu^2 \, do \right)^{1/2}
$$
on
$$
	X_h := \left\{ \psi_h \in (V_h)^2 : \int_\Gamma (\psi_h^l)_\tau \, do = 0 \right\}. 
$$
In order to see that $\| \cdot \|_{X_h^l}$ defines a norm, we assume that $\| \psi_h^l \|_{X_h^l} = 0$.
From the Poincar\'e inequality it immediately follows that $(\psi_h^l)_\tau = 0$. Furthermore, $\grad (\psi_h^l)_\nu = 0$
and hence, $(\psi_h^l)_\nu$ is constant. From the fact that $\int_\Gamma |\grad f|^2 (\psi_h^l)^2 \, do = 0$ and $\int_\Gamma |\grad f|^2 \, do$
we can conclude that this constant must be zero, that is $\psi_h^l = (\psi_h^l)_\nu f + (\psi_h^l)_\tau f^\perp = 0$. Since $X_h^l$ is finite dimensional, 
$(X_h^l, \| \cdot \|_{X_h^l})$ is, in fact, a Banach space. $\| \cdot \|_{X_h^l}$ is a suitable norm for our problem, since we have
\begin{equation*}
	b(\psi_h^l, \psi_h^l) = \| \psi_h^l \|^2_{X_h^l} \quad \textnormal{for all $\psi_h^l \in X_h^l$}.
\end{equation*}
\fi
\begin{lem}
\label{lemma_bartels}
Let $\theta_d(h) := h^{1 - d/2} \log h^{-1}$ if $d \geq 2$ and $\theta_d(h) := 1$ if $d = 1$.
If $h < 1$, then the following estimate holds
$$
	\| f_h^l \|_{L^\infty} \leq C \theta_d(h) \| f_h^l \|_{H^1},
$$
for all $f_h^l \in V_h^l$ on the $d$-dimensional hypersurface $\Gamma \subset \mathbb{R}^{d+1}$.
\end{lem}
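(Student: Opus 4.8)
The plan is to combine the continuous Sobolev embedding on $\Gamma$ with a finite element inverse inequality, and to pass between $\Gamma$ and $\Gamma_h$ by means of the norm equivalences of Proposition \ref{prop_equivalence_of_norms}. The case $d=1$ is immediate: since $\dim \Gamma = 1$, the embedding $H^1(\Gamma) \hookrightarrow L^\infty(\Gamma)$ is continuous with a constant depending only on $\Gamma$, so that, using $\|f_h^l\|_{L^\infty(\Gamma)} = \|f_h\|_{L^\infty(\Gamma_h)}$ and Proposition \ref{prop_equivalence_of_norms}, one gets $\|f_h^l\|_{L^\infty} \le C(\Gamma) \|f_h^l\|_{H^1}$, i.e. $\theta_1(h) = 1$ works. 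For $d \in \{2,3\}$ the embedding $H^1 \hookrightarrow L^\infty$ fails, and the idea is to work in $W^{1,p}$ for $p>d$ and then let $p$ depend on $h$.

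First I would record a quantitative Morrey inequality on the closed smooth manifold $\Gamma$: for every $p > d$ there is a constant such that
$$
\|v\|_{L^\infty(\Gamma)} \le \frac{C(\Gamma)}{p-d}\,\|v\|_{W^{1,p}(\Gamma)}, \qquad v \in W^{1,p}(\Gamma),
$$
the decisive feature being the blow-up rate $(p-d)^{-1}$ as $p \searrow d$ (any extra factors bounded for $p$ near $d$ are harmless). This is obtained from the flat Morrey inequality in a finite atlas of charts on the compact $\Gamma$, keeping track of the $p$-dependence of the constant. Secondly I would invoke the standard inverse inequality for the piecewise affine space $V_h$ on the triangulation $\mathcal{T}$, which is quasi-uniform because of the assumption that the inner radii are $\ge C_1 h$ while the diameters are $\le h$: for $p \ge 2$,
$$
\|f_h\|_{W^{1,p}(\Gamma_h)} \le C(\Gamma)\, h^{d/p - d/2}\, \|f_h\|_{W^{1,2}(\Gamma_h)},
$$
applied elementwise to $f_h$ and to its piecewise constant gradient $\gradh f_h$.

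Chaining these two estimates through Proposition \ref{prop_equivalence_of_norms} yields, for every $p > d$,
$$
\|f_h^l\|_{L^\infty} \le \frac{C(\Gamma)}{p-d}\, h^{d/p - d/2}\, \|f_h^l\|_{H^1}.
$$
It then remains to optimise the prefactor $(p-d)^{-1} h^{d/p - d/2}$ over $p$. Choosing $p = p(h) := d(1 + 1/\log h^{-1})$, which satisfies $p > d$ for all $h < 1$, gives $p - d = d/\log h^{-1}$, hence $(p-d)^{-1} = \tfrac{1}{d}\log h^{-1}$, while $d/p - d/2 = (1 - d/2) - 1/(\log h^{-1} + 1)$ and therefore $h^{d/p - d/2} \le e\, h^{1 - d/2}$ for $h < 1$. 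Multiplying, the prefactor is bounded by $\tfrac{e}{d}\,C(\Gamma)\, h^{1 - d/2} \log h^{-1} = C(\Gamma)\,\theta_d(h)$, which is the asserted estimate.

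The main obstacle is the first ingredient, namely establishing the sharp $(p-d)^{-1}$ dependence of the Sobolev constant on $\Gamma$, since it is precisely this controlled blow-up, balanced against the gain $h^{d/p}$ from the inverse inequality, that produces the logarithmic factor; the inverse estimate, the transfer between $\Gamma_h$ and $\Gamma$, and the final elementary optimisation are all routine. As a cross-check one may bypass Morrey: for $d=2$ the bound $\|v\|_{L^p(\Gamma)} \le C\sqrt{p}\,\|v\|_{H^1}$ together with the inverse estimate $\|f_h\|_{L^\infty} \le C h^{-d/p}\|f_h\|_{L^p}$ and the choice $p \sim \log h^{-1}$ even gives the sharper factor $(\log h^{-1})^{1/2}$, and for $d=3$ the critical embedding $H^1(\Gamma) \hookrightarrow L^6(\Gamma)$ combined with the inverse estimate gives the sharper $h^{-1/2}$; in both cases the stated bound $\theta_d(h)$ follows a fortiori.
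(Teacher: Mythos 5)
Your argument is correct, and it is in fact more informative than what the paper provides: the paper's ``proof'' of this lemma for $d\geq 2$ is a citation of Lemma~1.4.9 in \cite{Ba08} (a discrete Sobolev inequality on quasi-uniform meshes) combined with the norm equivalences of Proposition~\ref{prop_equivalence_of_norms}, while the $d=1$ case is handled, exactly as you do, by the embedding $H^1\hookrightarrow L^\infty$. What you have done is reconstruct the content of the cited lemma: the Morrey inequality with the explicit blow-up rate $(p-d)^{-1}$, the $W^{1,p}$--$W^{1,2}$ inverse estimate with factor $h^{d/p-d/2}$ (note that the global version follows from the elementwise one via $\ell^2\hookrightarrow\ell^p$ for $p\geq 2$, and the local constant is uniform in $p$ by norm equivalence on the reference element), and the optimisation $p=d(1+1/\log h^{-1})$, which balances the two and produces exactly the logarithmic factor in $\theta_d(h)$. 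Your arithmetic at the optimisation step is right: $(p-d)^{-1}=\tfrac{1}{d}\log h^{-1}$ and $h^{d/p-d/2}\leq e\,h^{1-d/2}$. Two minor points: you only need uniformity of the Morrey constant for $p$ in a bounded neighbourhood of $d$, which your choice of $p(h)$ guarantees once $h\leq h_0<1$ (for $h$ near $1$ the quantity $\theta_d(h)$ degenerates and the statement is only meaningful for $h$ small anyway); and your closing cross-checks via $\|v\|_{L^p}\leq C\sqrt{p}\,\|v\|_{H^1}$ for $d=2$ and $H^1\hookrightarrow L^6$ for $d=3$ are both valid and indeed yield sharper constants than $\theta_d(h)$, so the lemma as stated follows a fortiori. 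Overall: same underlying mechanism as the reference the paper leans on, but you have actually supplied the proof.
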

\begin{proof}
For $d \geq 2$ the claim follows from Lemma 1.4.9 in \cite{Ba08}, the equivalence of norms in Proposition \ref{prop_equivalence_of_norms} and the fact that we have
assumed that the mesh $\Gamma_h$ is quasi-uniform. For $d = 1$ the embedding of $H^1(\Gamma)$ into $L^\infty(\Gamma)$ gives the result. 
\end{proof}
It is easy to see that for $d \leq 3$ we have
\begin{equation}
\label{limit_h_theta}
	\lim_{h \searrow 0} h \ \theta_d(h) = 0.
\end{equation}
We define the following finite element spaces
$$
	X_h := \left\{ \psi_h \in (V_h)^2 : \int_\Gamma (\psi_h^l)_\tau \, do = 0 \right\} \quad \textnormal{and} \quad X_h^l := \left\{ \psi_h^l : \psi_h \in  X_h \right\}
	\subset X.
$$
\begin{prop}
\label{prop_conditions_inverse_function_theorem}
Let $h_0 > 0$ be sufficiently small and $h \leq h_0$. Furthermore, let $f: \Gamma \rightarrow \mathbb{S}^1 \subset \mathbb{R}^2$ be a smooth harmonic map
on the closed hypersurface $\Gamma \subset \mathbb{R}^{d+1}$, $d \leq 3$. Then the following estimates hold
\begin{align}
& |E_h'(I_h f_l)(\psi_h)| \leq C(\Gamma,f,h_0) h \| \psi_h \|_{H^1} \quad \textnormal{for all $\psi_h \in (V_h)^2$,}
\label{first_estimate}
\\
& E''_h(I_h f_l)(\psi_h, \psi_h) \geq \tfrac{\lambda^\ast}{2} \| \psi_h \|_{H^1}^2 
	\quad \textnormal{for all $\psi_h \in X_h$,} 
\label{second_estimate}	
\end{align}
and 	for $\eta_h \in (V_h)^2$ with $\| \eta_h \|_{H^1} \leq \tfrac{\min\{ \omega_0 , \lambda^\ast/4 \}}{C(\Gamma, f, h_0) \theta_d(h)}$ we have
\begin{align}
& |(E''_h(I_h f_l) - E''_h(I_h f_l + \eta_h))(\psi_h, \psi_h)| \leq \tfrac{\lambda^\ast}{4} \| \psi_h \|_{H^1}^2 
	\quad \textnormal{for all $\psi_h \in (V_h)^2$.}
\label{third_estimate}	
\end{align}
Here, $\lambda^\ast := \tfrac{\lambda}{C(\Gamma)}$ with the coercivity constant $\lambda$ from Lemma \ref{lemma_coercivity} 
and $\omega_0$ is the constant from Lemma \ref{lemma_continuity_second_variation}. 
\end{prop}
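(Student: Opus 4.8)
The three inequalities are exactly the hypotheses $\delta$, $\alpha$ and $\beta$ of the quantitative Inverse Function Theorem (Lemma \ref{lemma_inverse_function_theorem}), applied to the $C^1$-map $\mathcal{F} := E_h'$ with base point $x_0 := I_h f_l$ and derivative $\mathcal{F}' = E_h''$, so the plan is to verify them one at a time. The common device in all three proofs is to transport the discrete quantities evaluated at $I_h f_l$ back to $\Gamma$ via the lift $(\cdot)^l$ and compare them with the corresponding continuous quantities evaluated at the exact harmonic map $f$, controlling the discrepancy by the geometric estimates of Proposition \ref{prop_geometric_estimates}, the norm equivalence of Proposition \ref{prop_equivalence_of_norms} and the interpolation estimates (\ref{interpolation_estimate_1})--(\ref{interpolation_estimate_2}). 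Throughout one uses that $|f| = 1$ on $\Gamma$, so that $|I_h f_l|$ stays within $O(h^2)$ of $1$ in $L^\infty$ and every rational coefficient built from $|f_h|$ is smooth and uniformly bounded near $I_h f_l$.

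For the consistency estimate (\ref{first_estimate}) the key observation is that $f$ is a critical point of the extended continuous energy, i.e.\ $E'(f)(v) = 0$ for every $v \in (H^1(\Gamma))^2$: inserting $|f| = 1$ into the continuous first variation reduces it to $\int_\Gamma \grad f : \grad v - |\grad f|^2 (f \cdot v)\, do$, which vanishes because $f$ solves $\Delta_\Gamma f = -|\grad f|^2 f$ weakly. I would therefore write $E_h'(I_h f_l)(\psi_h) = E_h'(I_h f_l)(\psi_h) - E'(f)(\psi_h^l)$ using (\ref{first_variation_discrete_energy}), and split the right-hand side into a purely geometric contribution (the mismatch between integration over $\Gamma_h$ and $\Gamma$, and between $\gradh$ and $\grad$ of lifts), which is $O(h^2)$ by Proposition \ref{prop_geometric_estimates}, and an interpolation contribution from replacing $f$ by $I_h^l f$. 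The dominant term is $\|\grad(f - I_h^l f)\|_{L^2} \le C h$ from (\ref{interpolation_estimate_2}), giving the claimed bound $C(\Gamma,f,h_0)\,h\,\|\psi_h\|_{H^1}$.

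For the coercivity estimate (\ref{second_estimate}) I would compare $E_h''(I_h f_l)(\psi_h,\psi_h)$ from (\ref{second_variation_discrete_energy}) with the continuous bilinear form $b(\psi_h^l,\psi_h^l)$ of (\ref{b_definition}), which equals the continuous second variation (\ref{second_variation_continuous_case}) at $f$. Since $\psi_h \in X_h$ implies $\psi_h^l \in X$, Lemma \ref{lemma_coercivity} gives $b(\psi_h^l,\psi_h^l) \ge \lambda \|\psi_h^l\|_{H^1}^2 \ge \tfrac{\lambda}{C(\Gamma)} \|\psi_h\|_{H^1}^2 = \lambda^\ast \|\psi_h\|_{H^1}^2$ after Proposition \ref{prop_equivalence_of_norms}. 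It then remains to show that, term by term, the difference between the two second variations is controlled by the geometric error ($O(h^2)$) together with $\|f - (I_h f_l)^l\|_{L^\infty} \le C h^2$ and $\|\grad(f - (I_h f_l)^l)\|_{L^\infty} \le C h$, hence bounded by $C h \|\psi_h\|_{H^1}^2$. Choosing $h_0$ so small that $C h_0 \le \lambda^\ast/2$ and absorbing the error yields (\ref{second_estimate}).

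Finally, for the Lipschitz estimate (\ref{third_estimate}) I would exploit that, away from the origin, $f_h \mapsto E_h''(f_h)$ is a smooth function of $f_h$ and $\gradh f_h$ with uniformly bounded derivatives, so that Lemma \ref{lemma_continuity_second_variation} supplies a bound of the form $|(E_h''(I_h f_l) - E_h''(I_h f_l + \eta_h))(\psi_h,\psi_h)| \le C \theta_d(h)\|\eta_h\|_{H^1}\|\psi_h\|_{H^1}^2$, valid as long as $\|\eta_h\|_{L^\infty} \le \omega_0$ keeps $I_h f_l + \eta_h$ in the admissible region bounded away from $0$. The only way to convert the available $H^1$-smallness of $\eta_h$ into this $L^\infty$-control is the inverse inequality of Lemma \ref{lemma_bartels}, $\|\eta_h\|_{L^\infty} \le C\theta_d(h)\|\eta_h\|_{H^1}$, and the hypothesis $\|\eta_h\|_{H^1} \le \min\{\omega_0,\lambda^\ast/4\}/(C(\Gamma,f,h_0)\theta_d(h))$ is tailored so that this simultaneously guarantees $\|\eta_h\|_{L^\infty} \le \omega_0$ and drives the right-hand side below $\tfrac{\lambda^\ast}{4}\|\psi_h\|_{H^1}^2$. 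I expect the main obstacle to lie precisely here: because $E_h''$ is nonlinear in both $f_h$ and its gradient, the perturbation estimate genuinely needs $L^\infty$-type control of $\eta_h$ (and, in the gradient terms, of $\psi_h$), available only through the inverse inequality; the delicate bookkeeping is to decide which factor is estimated in $L^2$ and which in $L^\infty$ so that no unpaid power of $h^{-1}$ survives and the argument still closes for all $d \le 3$ via (\ref{limit_h_theta}).
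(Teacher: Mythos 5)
Your proposal is correct and follows essentially the same route as the paper: the first estimate is the consistency bound of Lemma \ref{lemma_smallness_of_first_variation} (which you re-derive by exploiting $E'(f)=0$ and splitting geometric and interpolation errors), the second combines the coercivity of $b$ from Lemma \ref{lemma_coercivity} with the consistency of the second variation (Lemma \ref{lemma_consistency_second_variation}) and absorbs the $O(h)$ error for small $h_0$, and the third converts the $H^1$-smallness of $\eta_h$ into $L^\infty$-control via Lemma \ref{lemma_bartels} before invoking Lemma \ref{lemma_continuity_second_variation}. No gaps; this matches the paper's argument.
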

\begin{proof}
The first inequality directly follows from Lemma \ref{lemma_smallness_of_first_variation}. For the second estimate we use Lemmas \ref{lemma_coercivity} and
\ref{lemma_consistency_second_variation} and obtain
\begin{align*}
E_h''(I_h f_l)(\psi_h, \psi_h) &\geq E''(f)(\psi_h^l, \psi_h^l) - |E''(f)(\psi_h^l, \psi_h^l) - E_h''(I_h f_l)(\psi_h, \psi_h)|
 \\
 &\geq b(\psi_h^l, \psi_h^l) - C(\Gamma, f, h_0) h \| \psi_h \|_{H^1}^2
 \\
 &\geq \lambda \| \psi_h^l \|_{H^1}^2 - C(\Gamma, f, h_0) h \| \psi_h \|_{H^1}^2
 \\
 &\geq \left( \tfrac{\lambda}{C(\Gamma)} - C(\Gamma, f, h_0) h \right) \| \psi_h \|_{H^1}^2.
\end{align*} 
If we choose $h \leq h_0$ for $h_0$ sufficiently small the claim follows. For the last inequality we observe that
$$
	\| \eta_h \|_{L^\infty} \leq C(\Gamma) \| \eta_h^l \|_{L^\infty} \leq C(\Gamma) \theta_d(h) \| \eta_h^l \|_{H^1} \leq \omega_0.
$$
We can therefore apply Lemma \ref{lemma_continuity_second_variation}. This yields
\begin{align*}
	|(E''_h(I_h f_l) - E''_h(I_h f_l + \eta_h))(\psi_h, \psi_h)| &\leq C(f, h_0) \theta_d(h) \| \eta_h \|_{H^1} \| \psi_h \|^2_{H^1} \sum_{k=0}^4 \theta_d(h)^k \| \eta_h \|_{H^1}^k
	\\
	&\leq C(\Gamma,f,h_0) \theta_d(h) \| \eta_h \|_{H^1} \| \psi_h \|^2_{H^1}
	\\
	&\leq \tfrac{\lambda^\ast}{4} \| \psi_h \|^2_{H^1}.
\end{align*}
\end{proof}

\begin{thm}
Let $d \leq 3$. Furthermore, let $h_0 > 0$ be sufficiently small and $h \leq h_0$. 
Then for every smooth harmonic map $f: \Gamma \rightarrow \mathbb{S}^1 \subset \mathbb{R}^2$ on the closed hypersurface $\Gamma \subset \mathbb{R}^{d+1}$ there exists $f_h \in (V_h)^2$
with $\int_\Gamma (f_h^l)_\tau \, do = \int_\Gamma (I_h^l f)_\tau \, do$ such that
\begin{equation}
	\int_{\Gamma_h} \gradh f_h : \gradh \psi_h \left( \tfrac{1}{2} + \tfrac{1}{2|f_h|^4} \right) \, do
		= \int_{\Gamma_h} f_h \cdot \psi_h \tfrac{|\gradh f_h|^2}{|f_h|^6} \, do
	\label{discrete_stationary_equation}	
\end{equation}
for all $\psi_h \in (V_h)^2$, and
$$
	\| f - f_h^l \|_{H^1} \leq C(\Gamma,f,h_0) h.
$$
Moreover, $f_h \in (V_h)^2$ is the unique stationary point for $E_h(f_h) = \tfrac{1}{2} \int_{\Gamma_h} | \gradh f_h|^2 \big(\tfrac{1}{2} + \tfrac{1}{2 |f_h|^4} \big) do$ which satisfies the two conditions 
$\int_\Gamma (f_h^l)_\tau \, do = \int_\Gamma (I_h^l f)_\tau \, do$
and 
$$
	\| f - f_h^l \|_{H^1} \leq \widetilde{C}(\Gamma,f,h_0) \frac{1}{\theta_d(h)}.
$$
\end{thm}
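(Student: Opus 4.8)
The plan is to realise the discrete stationary equation (\ref{discrete_stationary_equation}) as a zero of the first variation $E_h'$ and to produce it by the quantitative Inverse Function Theorem, Lemma \ref{lemma_inverse_function_theorem}. I set the affine Banach space to be $\mathcal{X}_h := I_h f_l + X_h$ with tangent space $\hat{X} := X_h$ equipped with the $H^1$-norm, and take $Y := X_h^\ast$, the dual of $X_h$. The map $\mathcal{F} \in C^1(\mathcal{X}_h, Y)$ is defined by $\mathcal{F}(g_h) := E_h'(g_h)\big|_{X_h}$, so that $\mathcal{F}'(g_h) = E_h''(g_h)\big|_{X_h \times X_h}$ and $\mathcal{F}(f_h) = 0$ means $E_h'(f_h)(\psi_h) = 0$ for all $\psi_h \in X_h$. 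The base point is $x_0 := I_h f_l$, and the first and second variations of $E_h$ are given by (\ref{first_variation_discrete_energy}) and (\ref{second_variation_discrete_energy}).

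The three hypotheses of Lemma \ref{lemma_inverse_function_theorem} are exactly the content of Proposition \ref{prop_conditions_inverse_function_theorem}. Indeed, (\ref{first_estimate}) gives $\|\mathcal{F}(x_0)\|_Y \le C(\Gamma,f,h_0)\, h =: \delta$; the coercivity (\ref{second_estimate}) yields, via Lax--Milgram on the finite-dimensional space $X_h$, invertibility of $\mathcal{F}'(x_0)$ with $\|\mathcal{F}'(x_0)^{-1}\| \le (\lambda^\ast/2)^{-1}$, so $\alpha := \tfrac{\lambda^\ast}{2}$; and since $E_h''$ is symmetric, (\ref{third_estimate}) gives the Lipschitz-type bound $\|\mathcal{F}'(x) - \mathcal{F}'(x_0)\| \le \tfrac{\lambda^\ast}{4} =: \beta$ on the ball $\overline{B}_{\varepsilon_{\max}}(x_0)$ with $\varepsilon_{\max} := \tfrac{\min\{\omega_0,\lambda^\ast/4\}}{C(\Gamma,f,h_0)\,\theta_d(h)}$. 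Then $\beta < \alpha$. For existence I first apply the theorem with the radius $\varepsilon := \tfrac{\delta}{\alpha - \beta} = \tfrac{4\delta}{\lambda^\ast}$, for which $\delta \le (\alpha-\beta)\varepsilon$ holds with equality; the requirement $\varepsilon \le \varepsilon_{\max}$ reduces to $C(\Gamma,f,h_0)\,h\,\theta_d(h) \le \mathrm{const}$, which holds for $h \le h_0$ with $h_0$ small by (\ref{limit_h_theta}). This produces a unique $f_h \in \overline{B}_\varepsilon(x_0)$ with $\mathcal{F}(f_h) = 0$, hence $\|f_h - I_h f_l\|_{H^1} \le \tfrac{4\delta}{\lambda^\ast} \le C(\Gamma,f,h_0)\, h$.

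The key remaining point is to upgrade $\mathcal{F}(f_h) = 0$, which only tests against $X_h$, to the full equation (\ref{discrete_stationary_equation}), i.e.\ $E_h'(f_h)(\psi_h) = 0$ for all $\psi_h \in (V_h)^2$. Since $\rho$ depends only on $|x|$ and rotations are Euclidean isometries, $E_h$ is invariant under the $SO(2)$-action $g_h \mapsto R g_h$; differentiating this invariance at the identity in the direction $J$ (rotation by $\tfrac{\pi}{2}$) gives $E_h'(f_h)(f_h^\perp) = 0$, where $f_h^\perp := (-f_h^2, f_h^1) = J f_h$ again lies in $(V_h)^2$ as a fixed linear image of $f_h$. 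Using $f^\perp = Jf$ and the orthogonality of $J$ one computes $((f_h^\perp)^l)_\tau = (J f_h^l)\cdot(Jf) = f_h^l \cdot f$, whence $\int_\Gamma ((f_h^\perp)^l)_\tau\,do = |\Gamma| + O(\|f_h^l - f\|_{L^2}) \ne 0$ for $h$ small by the error bound just obtained. Hence $f_h^\perp \notin X_h$, and since $X_h$ has codimension one in $(V_h)^2$ we get $(V_h)^2 = X_h \oplus \mathbb{R} f_h^\perp$; combining $\mathcal{F}(f_h) = 0$ with $E_h'(f_h)(f_h^\perp) = 0$ yields (\ref{discrete_stationary_equation}). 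The $H^1$-error estimate then follows from the triangle inequality, the norm equivalence of Proposition \ref{prop_equivalence_of_norms}, and the interpolation bounds (\ref{interpolation_estimate_1})--(\ref{interpolation_estimate_2}): $\|f - f_h^l\|_{H^1} \le \|f - I_h^l f\|_{H^1} + C\|I_h f_l - f_h\|_{H^1} \le C(\Gamma,f,h_0)\, h$.

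Finally, for the uniqueness statement I reapply Lemma \ref{lemma_inverse_function_theorem} with the largest admissible radius $\varepsilon_{\max} \sim 1/\theta_d(h)$; since $\varepsilon = \tfrac{4\delta}{\lambda^\ast} \le \varepsilon_{\max}$, the smaller ball sits inside the larger one and their unique zeros coincide, so $f_h$ is the unique zero of $\mathcal{F}$ in $\overline{B}_{\varepsilon_{\max}}(I_h f_l)$. Any constrained stationary point of $E_h$, i.e.\ a solution of (\ref{discrete_stationary_equation}) with $\int_\Gamma (f_h^l)_\tau\,do = \int_\Gamma (I_h^l f)_\tau\,do$, lies in $\mathcal{X}_h$ and is a zero of $\mathcal{F}$; translating the bound $\|\cdot - I_h f_l\|_{H^1} \le \varepsilon_{\max}$ through Proposition \ref{prop_equivalence_of_norms} and (\ref{interpolation_estimate_1})--(\ref{interpolation_estimate_2}) into $\|f - f_h^l\|_{H^1} \le \widetilde{C}(\Gamma,f,h_0)/\theta_d(h)$, absorbing the lower-order $O(h)$ interpolation term, gives the claimed uniqueness. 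I expect the main obstacle to be precisely the passage from $X_h$ to $(V_h)^2$: one must identify the single missing test direction and exploit the rotational invariance of $E_h$, which is the unconstrained counterpart of the Babu\v{s}ka--Brezzi argument that is avoided here.
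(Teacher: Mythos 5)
Your proposal is correct and follows essentially the same route as the paper: the same affine space $\mathcal{X} = I_h f_l + X_h$ with $Y = X_h^\ast$, the same constants $\delta, \alpha, \beta, \varepsilon$ fed into Lemma \ref{lemma_inverse_function_theorem} via Proposition \ref{prop_conditions_inverse_function_theorem}, the same two choices of radius for the $O(h)$ existence bound and the $1/\theta_d(h)$ uniqueness ball, and the same codimension-one decomposition $(V_h)^2 = X_h \oplus \mathrm{span}\{f_h^\perp\}$ justified by $\int_\Gamma (f_h^{\perp l})_\tau\,do = |\Gamma| + O(h) \neq 0$. The only cosmetic difference is that you obtain $E_h'(f_h)(f_h^\perp) = 0$ from the $SO(2)$-invariance of $E_h$, while the paper observes directly that $f_h \cdot f_h^\perp = 0$ and $\gradh f_h : \gradh f_h^\perp = 0$ make both integrals in (\ref{discrete_stationary_equation}) vanish for $\psi_h = f_h^\perp$; both are one-line arguments for the same fact.
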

\begin{proof}
We set $\mathcal{X} := \{ \psi_h \in (V_h)^2 : \int_\Gamma (\psi_h^l)_\tau \, do = \int_\Gamma (I_h^l f)_\tau \, do \}$ 
and define $Y$ to be the dual space of $X_h$, that is $Y := X_h^\ast$. The tangent space $\hat{X}$ of $\mathcal{X}$ is given by $\hat{X} = X_h$. 
On $\mathcal{X}$ and $\hat{X}$ we use the norm $\| \cdot \|_{X_h} := \| \cdot \|_{H^1}$ and on $Y$ 
the corresponding dual norm $\| \cdot \|_Y := \| \cdot \|_{\ast} := \| \cdot \|_{L(X_h, \mathbb{R})}$. 
Since $\mathcal{X}$, $\hat{X}$ and $Y$ are finite-dimensional, they are also (affine) Banach spaces.
The map $\mathcal{F}: \mathcal{X} \rightarrow Y$ is defined by $\mathcal{F}(x) := E'_h(x)$. 
We now choose $x_0 := I_h f_l$, $\delta := C(\Gamma, f, h_0) h$, $\alpha = \tfrac{\lambda^\ast}{2}$ as well as $\beta := \tfrac{\lambda^\ast}{4}$
with $\lambda^\ast$ from the above proposition.
The first condition in Lemma \ref{lemma_inverse_function_theorem}, that is $\| \mathcal{F}(x_0) \|_Y < \delta$ or respectively,
$\| E'_h (I_f f_l)\|_\ast \leq C(\Gamma, f, h_0) h$, is then satisfied because of (\ref{first_estimate}).
We next observe that
$$
	\| \mathcal{F}'(x_0)^{-1} \|_{L(Y,\hat{X})} \leq \left( \inf_{x \in \hat{X}} \frac{\| \mathcal{F}'(x_0)(x) \|_Y }{\| x \|_{\hat{X}}}\right)^{-1} 
	= \left( \inf_{\psi_h \in X_h} \frac{\| \mathcal{F}'(I_h f_l)(\psi_h) \|_\ast }{\| \psi_h \|_{H^1}}\right)^{-1}.
$$
Note that $\mathcal{F}'(x_0) \in L(\hat{X},Y) = L(\hat{X}, L(\hat{X}, \mathbb{R}))$ and $E''_h(x_0) \in L(\hat{X} \times \hat{X}, \mathbb{R})$.
With the identification $\mathcal{F}'(x_0)(\psi_h)(\phi_h) = E''_h(x_0) (\psi_h, \phi_h)$, we obtain
$$
	\frac{\| \mathcal{F}'(I_h f_l) (\psi_h) \|_\ast}{\| \psi_h \|_{H^1}} \geq \frac{|\mathcal{F}' (I_h f_l)(\psi_h)(\psi_h)|}{\| \psi \|_{H^1}^2}
	= \frac{|E''_h(I_h f_l)(\psi_h, \psi_h)|}{\| \psi \|_{H^1}^2} \geq \frac{\lambda^\ast}{2}, 
$$
where we used (\ref{second_estimate}) in the last step. Hence, $\| \mathcal{F}'(x_0)^{-1} \|_{L(Y,\hat{X})} \leq ( \tfrac{\lambda^\ast}{2} )^{-1} = \alpha^{-1}$.
Furthermore, the above identification gives
\begin{align*}
	&\| \mathcal{F}'(x_0 + \eta_h) - \mathcal{F}'(x_0) \|_{L(\hat{X},Y)} 
	= \sup_{\psi_h \in X_h} \sup_{\phi_h \in X_h} \frac{|(E''_h(I_h f_l + \eta_h) - E''_h(I_h f_l))(\psi_h, \phi_h)|}{\| \psi_h \|_{H^1} \| \phi_h \|_{H^1}}
	\\
	&= \sup_{\psi_h \in X_h} \sup_{\phi_h \in X_h} \tfrac{1}{4} \left| (E''_h(I_h f_l + \eta_h) - E''_h(I_h f_l))
		(\tfrac{\psi_h}{\| \psi_h \|_{H^1}} + \tfrac{\phi_h}{\| \phi_h \|_{H^1}}, \tfrac{\psi_h}{\| \psi_h \|_{H^1}} + \tfrac{\phi_h}{\| \phi_h \|_{H^1}})  
		\right.
		\\ 
		& \qquad
		\left.
			- (E''_h(I_h f_l + \eta_h) - E''_h(I_h f_l))
		(\tfrac{\psi_h}{\| \psi_h \|_{H^1}} - \tfrac{\phi_h}{\| \phi_h \|_{H^1}}, \tfrac{\psi_h}{\| \psi_h \|_{H^1}} - \tfrac{\phi_h}{\| \phi_h \|_{H^1}}) 
		\right|,
\end{align*}
where we have used polarization in the second step. Applying (\ref{third_estimate}) leads to
\begin{align*}
	&\| \mathcal{F}'(x_0 + \eta_h) - \mathcal{F}'(x_0) \|_{L(\hat{X},Y)} 
	\\	
	&\leq \sup_{\psi_h \in X_h} \sup_{\phi_h \in X_h} \tfrac{1}{4} \left(
		\tfrac{\lambda^\ast}{4} \| \tfrac{\psi_h}{\| \psi_h \|_{H^1}} + \tfrac{\phi_h}{\| \phi_h \|_{H^1}} \|_{H^1}^2
		+ \tfrac{\lambda^\ast}{4} \| \tfrac{\psi_h}{\| \psi_h \|_{H^1}} - \tfrac{\phi_h}{\| \phi_h \|_{H^1}} \|_{H^1}^2
	\right)
	= \tfrac{\lambda^\ast}{4} = \beta,
\end{align*}
if $\eta_h \in X_h$ satisfies $\| \eta_h \|_{H^1} \leq \tfrac{\min\{ \omega_0 , \lambda^\ast/4 \}}{C(\Gamma, f, h_0) \theta_d(h)}$.
Obviously, we have $\beta < \alpha$ and the condition $\delta < (\alpha - \beta) \varepsilon$ in Lemma \ref{lemma_inverse_function_theorem}
leads to the condition $C(\Gamma, f, h_0) h \leq \tfrac{\lambda^\ast}{4} \varepsilon$. In order to apply the Inverse Function Theorem in Lemma \ref{lemma_inverse_function_theorem},
we therefore have to choose $\varepsilon > 0$ such that
\begin{equation*}
	\varepsilon \leq \tfrac{\min\{ \omega_0 , \lambda^\ast/4 \}}{C(\Gamma, f, h_0) \theta_d(h)} \quad \textnormal{and} \quad C(\Gamma, f, h_0) h \leq \tfrac{\lambda^\ast}{4} \varepsilon. 
\end{equation*}
We set $\varepsilon := \tfrac{4 C(\Gamma, f, h_0)}{\lambda^\ast} h$, which obviously satisfies the second condition. The first condition follows from
(\ref{limit_h_theta}). We conclude that there exists a $f_h \in \mathcal{X}$ with $\| I_h f_l - f_h \|_{H^1} \leq \varepsilon = C(\Gamma, f, h_0) h$ 
such that (\ref{discrete_stationary_equation}) holds for all $\psi_h \in \hat{X} = X_h$. Using the interpolation estimates (\ref{interpolation_estimate_1}) and (\ref{interpolation_estimate_2})
as well as the equivalence of norms in Proposition \ref{prop_equivalence_of_norms}, we obtain $\| f - f_h^l \|_{H^1} \leq C(\Gamma, f, h_0) h$.

By choosing $\varepsilon = \tfrac{\min\{ \omega_0 , \lambda^\ast/4 \}}{C(\Gamma, f, h_0) \theta_d(h)}$, which satisfies both conditions because of (\ref{limit_h_theta}),
we see that the above solution $f_h$ is the unique point in $\mathcal{X}$ satisfying (\ref{discrete_stationary_equation}) and 
$$
	\| I_h f - f_h \|_{H^1} \leq \widetilde{C}(\Gamma, f, h_0) \frac{1}{\theta_d(h)},
	\quad \textnormal{or respectively,} \quad \| f - f^l_h \|_{H^1} \leq \widetilde{C}(\Gamma, f, h_0) \frac{1}{\theta_d(h)}.
$$
It remains to show that (\ref{discrete_stationary_equation}) holds for all $\psi_h \in (V_h)^2$ if it holds for all $\psi_h \in X_h$.
Let $\tilde{\varphi}_h \in (V_h)^2$ be such that $\int_\Gamma (\tilde{\varphi}_h^l)_\tau \, do \neq 0$, then for dimensional reasons it holds that 
$$
	(V_h)^2 = X_h \oplus span\{ \tilde{\varphi}_h \}.
$$
For $f_h^\perp \in (V_h)^2$ with $(f_{h1}^\perp, f_{h2}^\perp) = ( - f_{h2}, f_{h1})$ we have 
\begin{align*}
	|\int_\Gamma (f_h^{\perp l})_\tau \, do| &\geq \int_\Gamma |f^\perp|^2 \, do - |\int_\Gamma (f_h^{\perp l} - f^\perp) \cdot f^\perp \, do|
	\\
%	&\geq \int_\Gamma 1 \, do - \| 1 \|_{L^2}  \| f_h^{\perp l} - f^\perp \|_{L^2} 
%	\\
	&\geq |\Gamma| - |\Gamma|^{1/2} \| f - f_h^l \|_{L^2}
	\\
	&\geq |\Gamma| - |\Gamma|^{1/2} C(\Gamma,f , h_0) h > 0
\end{align*}
for $h_0$ sufficiently small. We can therefore choose $\tilde{\varphi}_h = f_h^\perp$, that is $(V_h)^2 = X_h \oplus span\{ f_h^\perp \}$. 
Since $f_h \cdot f_h^\perp = 0$ and $\gradh f_h : \gradh f_h^\perp = 0$, equation (\ref{discrete_stationary_equation}) also holds
for $\psi_h = f_h^\perp$. This implies that (\ref{discrete_stationary_equation}) is true for all $\psi_h \in (V_h)^2$.
\end{proof}

\section{Numerical results}
\subsection{Implementation}
\label{section_implementation}
In order to apply Algorithm \ref{algorithm_sphere}, we have to solve a linear system of equations. Let $\phi_i$ for $i = 1, \ldots, N_V$
denote the Lagrange basis functions corresponding to the $i$-th vertex of the polyhedral hypersurface $\Gamma_h$. Furthermore, let
$\{ e_1, \ldots, e_{n+1} \}$ be the canonical basis of $\mathbb{R}^{n+1}$. The components of $f_h^{m+1}$ with respect to $\{ \phi_i e_\alpha \}$ are
denoted by $\mathbf{f}^{i \alpha}$, that is $f_h^{m+1} = \sum_{i=1}^{N_V} \sum_{\alpha = 1}^{n+1} \mathbf{f}^{i \alpha} \phi_i e_\alpha$.
In the $m$-th time step one then has to solve the system
\begin{align}
\label{linear_system}
 \sum_{j=1}^{N_V} \sum_{\beta=1}^{n+1} \left( \tfrac{1}{\tau} \mathbf{M}_{ij \alpha \beta} + \mathbf{S}_{ij \alpha \beta} \right) \mathbf{f}^{j\beta} = \mathbf{b}_{i\alpha},
\end{align}
for all $i = 1, \ldots, N_V$ and $\alpha = 1, \ldots, n+1$, where
\begin{align*}
 &	\mathbf{M}_{ij \alpha \beta} := \delta_{\alpha \beta} \int_{\Gamma_h} \phi_i \phi_j \left( \tfrac{1}{2} + \tfrac{1}{2|f_h^m|^4} \right)  \, do,
 \\
 &  \mathbf{S}_{ij \alpha \beta} := \delta_{\alpha \beta} \int_{\Gamma_h} \gradh \phi_i \cdot \gradh \phi_j \left( \tfrac{1}{2} + \tfrac{1}{2|f_h^m|^4} \right)  \, do,
 \\
 &  \mathbf{b}_{i \alpha} := \sum_{j=1}^{N_V} \sum_{\beta=1}^{n+1} \left( \tfrac{1}{\tau} \mathbf{M}_{ij \alpha \beta} 
 						+ \delta_{\alpha \beta} \int_{\Gamma_h} \phi_i \phi_j \tfrac{| \gradh f_h^m |^2}{|f_h^m|^6} \, do \right) \mathbf{f}^{j\beta}_{old},
\end{align*}
and $f_h^{m} = \sum_{i=1}^{N_V} \sum_{\alpha = 1}^{n+1} \mathbf{f}^{i \alpha}_{old} \phi_i e_\alpha$.
Note that the integrands in the above integrals are not constant on each simplex. 
For this reason a quadrature rule which is exact for a sufficiently high polynomial degree, see \cite{SS05} Section 1.4.6, should be employed.
In our tests, a quadrature rule exact for polynomials of degree five or less seemed to be sufficient. 
The implementation of Algorithm \ref{algorithm_sphere} is straightforward, since only the two weights $\left( \tfrac{1}{2} + \tfrac{1}{2 |f_h^m|^4} \right)$
and $\tfrac{|\gradh f_h^m|^2}{|f_h^m|^6}$ have to be included in the assemblage of the usual mass and stiff matrices.  
The linear system (\ref{linear_system}) can be solved by a conjugate gradient method.
The following numerical experiments were performed within the Finite Element Toolbox ALBERTA, see \cite{SS05}.

\subsection{Numerical examples}
\label{section_numerical_examples}

\subsubsection*{Experiment 1}
We compute a triangulation of the unit sphere by $n=4,5,6$ global refinements of an octahedron. In each refinement step all simplices are bisected twice. 
$f_h^0$ is then initialised as a linear interpolation of the identity map on the unit sphere. After that, the unit sphere is deformed by applying the map 
$\mathbb{S}^2 \ni (x_1,x_2,x_3) \mapsto (x_1, \alpha(x_1) x_2, \alpha(x_1) x_2)$ with $\alpha(x_1) = 0.6 \, x_1^2 + 0.4$. This leads to the polyhedral surface $\Gamma_h$,
see left picture in Figure \ref{Fig_Ex_1}. 
Since the values of $f_h^0$ have not been changed, $f_h^0$ is now a map from $\Gamma_h$ into the unit sphere. We choose the time step size $\tau = 0.001$
and compute the harmonic map heat flow by Algorithm \ref{algorithm_sphere}. 
As a stopping criterion we use 
$$
	\textnormal{Stop if:} \quad  \max_{p_j \in V} \tfrac{1}{\tau} |f_h^{m+1}(p_j) - f_h^m(p_j)| \leq\ 10^{-5},
$$
where $V$ is the set of the mesh vertices of $\Gamma_h$.
The experimental results are presented in Figures \ref{Fig_Ex_1} and \ref{Distance_Fig_Ex_1}.
We have studied the behaviour of the maximal distance
$$
	\max_{p_j \in V} ||f_h^m(p_j)| - 1|.
$$
We observe that this quantity remains bounded in time, although it is not zero for $t > 0$. 
We emphasize that one cannot expect the maximal distance to be zero for two reasons.
First, in our approach the mesh vertices are not distinguished from any other points on the surface and second, a piecewise linear surface cannot lie entirely in the unit sphere.
The important outcome of this experiment is that the maximal distance to the unit sphere remains bounded in time and that it decreases when the mesh size $h$ is reduced by global refinements, 
see \mbox{Figure \ref{Distance_Fig_Ex_1}}.
\begin{figure}
\begin{center}
\raisebox{-0.5\height}{\includegraphics[scale= 0.19]{./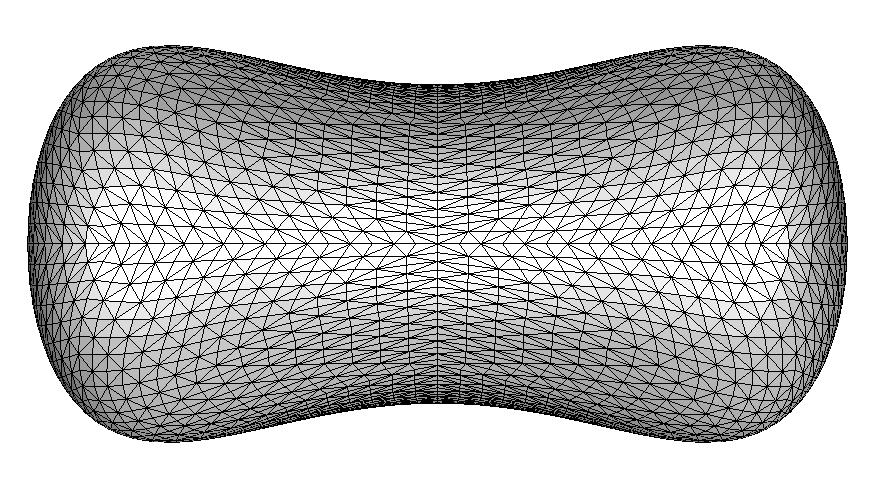}}
\raisebox{-0.5\height}{\includegraphics[scale= 0.2]{./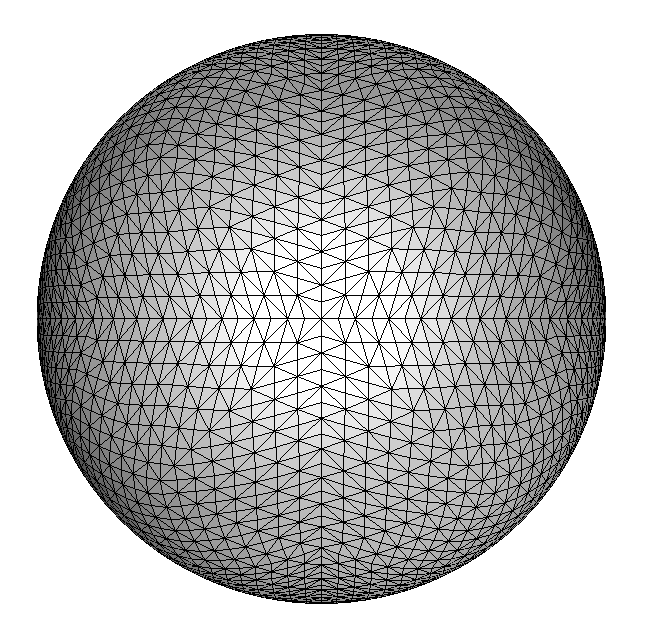}}
\raisebox{-0.5\height}{\includegraphics[scale= 0.2]{./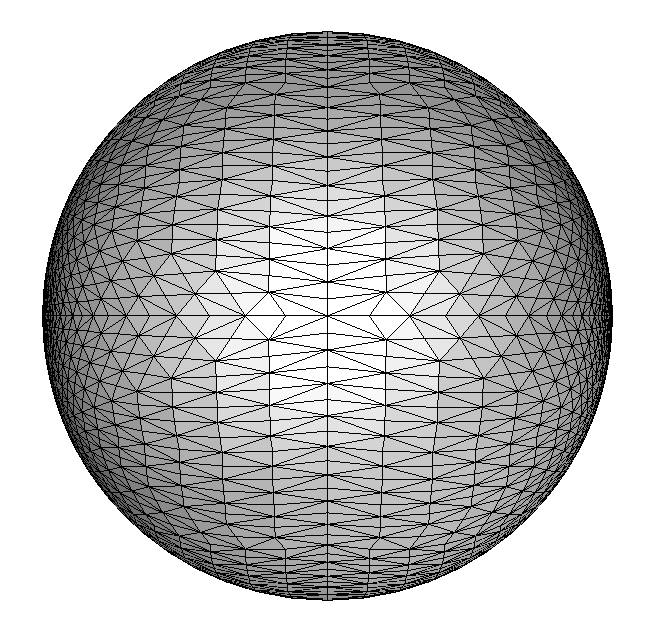}}
\end{center}
\caption{Computation of the harmonic map heat flow $f_h^m: \Gamma_h \rightarrow \mathbb{R}^3$ into the two-dimensional sphere. The left picture shows the polyhedral surface $\Gamma_h$
for $n=5$ global refinements. The pictures in the middle and on the right show the image $f_h^m(\Gamma_h)$ for time $t = 0$ and $t \approx 1.9$, respectively. See Experiment $1$ for further details.}
\label{Fig_Ex_1}
\end{figure}
\gdef\gplbacktext{}%
\gdef\gplfronttext{}%
\begin{figure}
\begin{center}
  \begin{picture}(3968.00,2834.00)%
    \gplgaddtomacro\gplbacktext{%
      \csname LTb\endcsname%
      \put(396,110){\makebox(0,0)[r]{\strut{} 0}}%
      \csname LTb\endcsname%
      \put(396,496){\makebox(0,0)[r]{\strut{} 0.002}}%
      \csname LTb\endcsname%
      \put(396,882){\makebox(0,0)[r]{\strut{} 0.004}}%
      \csname LTb\endcsname%
      \put(396,1268){\makebox(0,0)[r]{\strut{} 0.006}}%
      \csname LTb\endcsname%
      \put(396,1653){\makebox(0,0)[r]{\strut{} 0.008}}%
      \csname LTb\endcsname%
      \put(396,2039){\makebox(0,0)[r]{\strut{} 0.01}}%
      \csname LTb\endcsname%
      \put(396,2425){\makebox(0,0)[r]{\strut{} 0.012}}%
      \csname LTb\endcsname%
      \put(396,2811){\makebox(0,0)[r]{\strut{} 0.014}}%
      \csname LTb\endcsname%
      \put(528,-110){\makebox(0,0){\strut{}0.0}}%
      \csname LTb\endcsname%
      \put(1670,-110){\makebox(0,0){\strut{}0.5}}%
      \csname LTb\endcsname%
      \put(2811,-110){\makebox(0,0){\strut{}1.0}}%
      \csname LTb\endcsname%
      \put(3953,-110){\makebox(0,0){\strut{}1.5}}%
      \put(-638,1460){\rotatebox{-270}{\makebox(0,0){\strut{}Maximal distance to unit sphere}}}%
      \put(2240,-440){\makebox(0,0){\strut{}Time}}%
    }%
    \gplgaddtomacro\gplfronttext{%
      \csname LTb\endcsname%
      \put(2966,1680){\makebox(0,0)[r]{\strut{}$n=4$}}%
      \csname LTb\endcsname%
      \put(2966,1460){\makebox(0,0)[r]{\strut{}$n=5$}}%
      \csname LTb\endcsname%
      \put(2966,1240){\makebox(0,0)[r]{\strut{}$n=6$}}%
    }%
    \gplbacktext
    \put(0,0){\includegraphics{./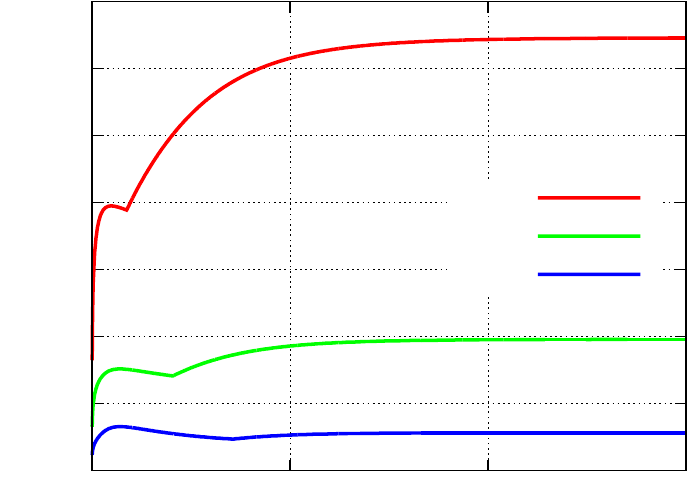}}%
    \gplfronttext
  \end{picture}%
\end{center}  
\vspace*{8mm}
\caption{The picture shows the maximal distance of the image $f_h^m(p_j)$ to the unit sphere, that is $\max_{p_j \in V} ||f_h^m(p_j)| - 1|$,
for different global mesh refinements $n$. See Experiment $1$ for further details.}
\label{Distance_Fig_Ex_1}
\end{figure}

\subsubsection*{Experiment 2}
In this experiment we study the behaviour of Algorithm \ref{algorithm_sphere} in the case that the initial map $f_h^0$ does not map into the unit sphere.
This is of course against the way how one should normally use our numerical scheme. In a standard application of the scheme, one would try to find an initial map $f_h^0$ 
which approximates the target manifold in the best possible way. Here, we do the opposite to demonstrate the performance of our algorithm. 
We therefore compose our original map $f_h^0$ from the Experiment $1$ with the map $(y_1,y_2,y_3) \mapsto \beta(y_1,y_3)(y_1,y_2,y_3)$, where $\beta(y_1,y_3) =  0.5 + y_1^2 y_3^2$.
The image $f_h^0(\Gamma_h)$ of our new initial map $f_h^0$ is shown in Figure \ref{Fig_Ex_2}. The maximal distance of the map $f_h^0$ to the target manifold is $0.5$.
In Figure \ref{Distance_Fig_Ex_2}, this maximal distance decays rapidly (exponentially) in time. This behaviour can be understood from equation (\ref{heat_equation_distance_function}), which says
that the distance to the target manifold decreases monotonically (in the continuous case). In particular, it strictly decreases as long as the gradient of $f$ is not zero.
At time $t \approx 2.8$, $f_h^m$ therefore maps approximately into the unit sphere, see right picture in Figure \ref{Fig_Ex_2}.  
\begin{figure}
\begin{center}
\raisebox{-0.5\height}{\includegraphics[scale= 0.2]{./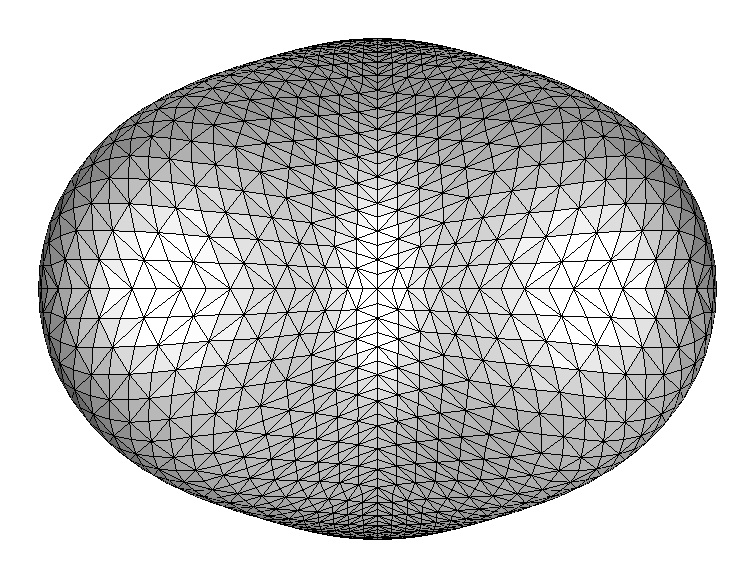}}
\raisebox{-0.5\height}{\includegraphics[scale= 0.2]{./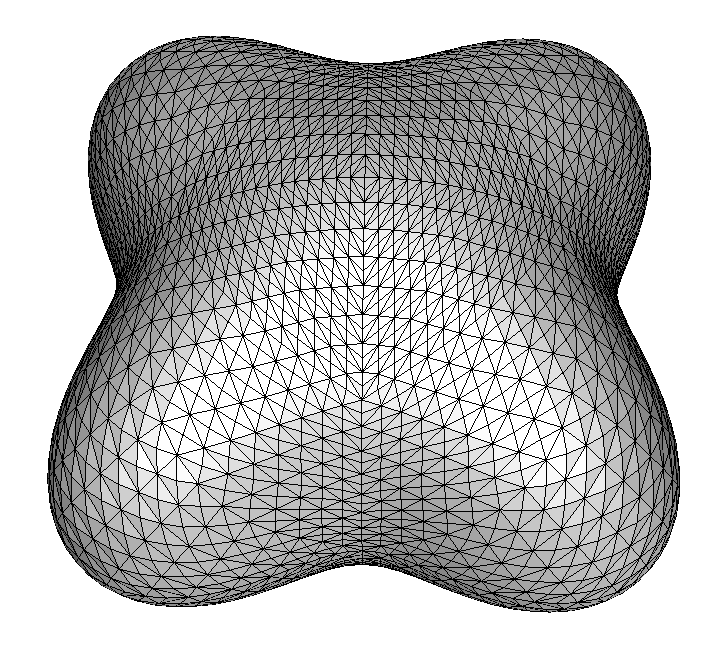}}
\raisebox{-0.5\height}{\includegraphics[scale= 0.2]{./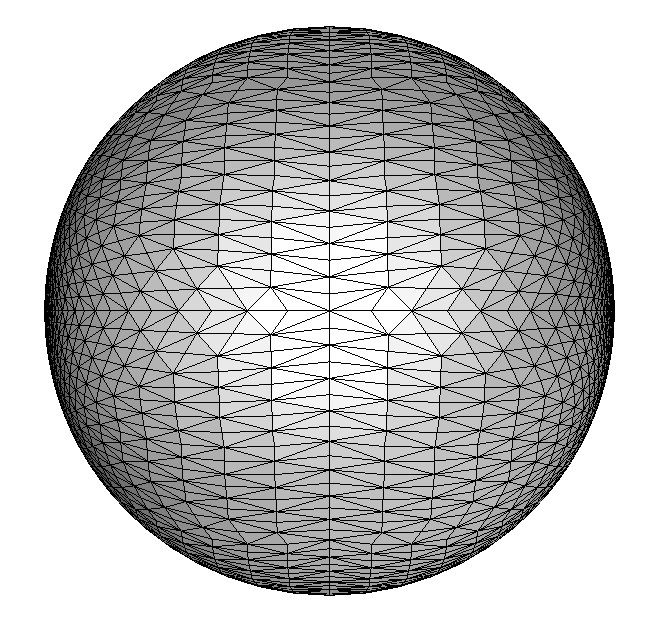}}
\end{center}
\caption{The pictures on the left and in the middle show the image $f_h^0(\Gamma_h)$ at time $t=0$. $\Gamma_h$ is shown in Fig. \ref{Fig_Ex_1}.
The initial map $f_h^0$ does not map into the unit sphere. The maximal distance of $f_h^0(p_j)$ to the unit sphere at time $t=0$ is $0.5$, see Fig. \ref{Distance_Fig_Ex_2}. 
The right pictures shows the image $f_h^m(\Gamma_h)$ at time $t \approx 2.8$. At that time, $f_h^m$ maps approximately into the unit sphere. See Experiment $2$ for further details.
}
\label{Fig_Ex_2}
\end{figure}
\gdef\gplbacktext{}%
\gdef\gplfronttext{}%
\begin{figure}
\begin{center}
  \begin{picture}(3968.00,2834.00)%
    \gplgaddtomacro\gplbacktext{%
      \csname LTb\endcsname%
      \put(396,110){\makebox(0,0)[r]{\strut{} 0}}%
      \csname LTb\endcsname%
      \put(396,650){\makebox(0,0)[r]{\strut{} 0.1}}%
      \csname LTb\endcsname%
      \put(396,1190){\makebox(0,0)[r]{\strut{} 0.2}}%
      \csname LTb\endcsname%
      \put(396,1731){\makebox(0,0)[r]{\strut{} 0.3}}%
      \csname LTb\endcsname%
      \put(396,2271){\makebox(0,0)[r]{\strut{} 0.4}}%
      \csname LTb\endcsname%
      \put(396,2811){\makebox(0,0)[r]{\strut{} 0.5}}%
      \csname LTb\endcsname%
      \put(528,-110){\makebox(0,0){\strut{}0.0}}%
      \csname LTb\endcsname%
      \put(1213,-110){\makebox(0,0){\strut{}0.5}}%
      \csname LTb\endcsname%
      \put(1898,-110){\makebox(0,0){\strut{}1.0}}%
      \csname LTb\endcsname%
      \put(2583,-110){\makebox(0,0){\strut{}1.5}}%
      \csname LTb\endcsname%
      \put(3268,-110){\makebox(0,0){\strut{}2.0}}%
      \csname LTb\endcsname%
      \put(3953,-110){\makebox(0,0){\strut{}2.5}}%
      \put(-374,1460){\rotatebox{-270}{\makebox(0,0){\strut{}Maximal distance to unit sphere}}}%
      \put(2240,-440){\makebox(0,0){\strut{}Time}}%
    }%
    \gplgaddtomacro\gplfronttext{%
      \csname LTb\endcsname%
      \put(2966,2638){\makebox(0,0)[r]{\strut{}$n=5$}}%
    }%
    \gplbacktext
    \put(0,0){\includegraphics{./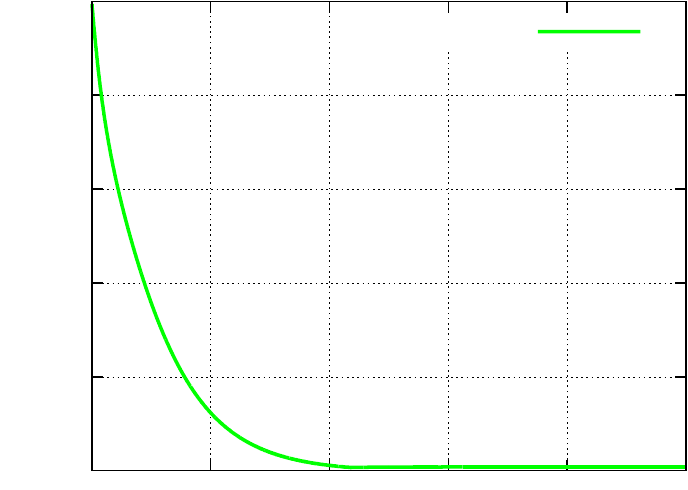}}%
    \gplfronttext
  \end{picture}%
\end{center}
\vspace*{8mm}
\caption{Time development of the maximal distance $\max_{p_j \in V} ||f_h^m(p_j)| - 1|$ under Algorithm \ref{algorithm_sphere}.
The initial map $f_h^0$ is visualized in Fig. \ref{Fig_Ex_2}. The distance to the unit sphere tends to zero rapidly and $f_h^m$ approximately maps into the unit sphere after some time. 
See Experiment $2$ for further details.}
\label{Distance_Fig_Ex_2}
\end{figure}

\subsubsection*{Experiment 3}
In the last experiment, we change the surface $\Gamma_h$ by deforming the unit sphere according to  
$\mathbb{S}^2 \ni (x_1,x_2,x_3) \mapsto (x_1, \alpha(x_1) x_2, \alpha(x_1) x_2)$ with $\alpha(x_1) = 0.75 \, x_1^2 + 0.25$, see Experiment 1 for details.
The initial map $f_h^0$ maps the mesh vertices $p_j \in \Gamma_h$ into the unit sphere like in Experiment $1$.
The result is presented in Figure \ref{Fig_Ex_3}.
\begin{figure}
\begin{center}
\raisebox{-0.5\height}{\includegraphics[scale= 0.2]{./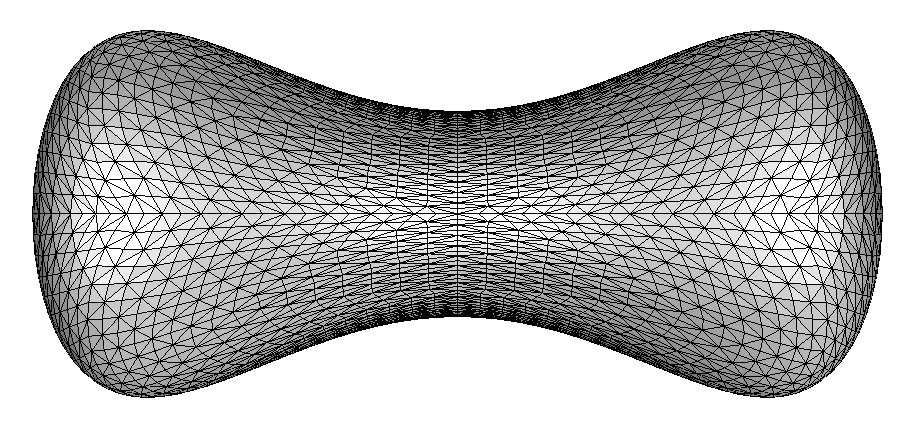}}
\hspace*{8mm}
\raisebox{-0.5\height}{\includegraphics[scale= 0.2]{./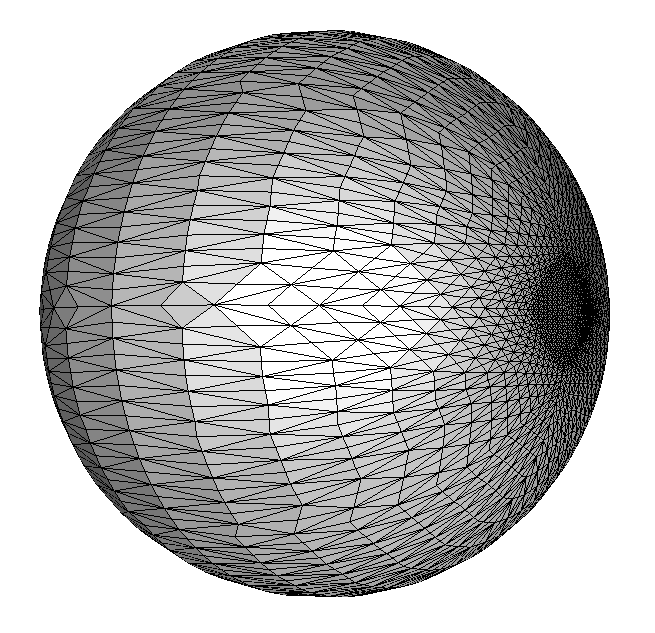}}
\end{center}
\caption{Computation of the harmonic map heat flow $f_h^m: \Gamma_h \rightarrow \mathbb{R}^3$ into the two-dimensional sphere. The left picture shows the polyhedral hypersurface $\Gamma_h$
for $n= 5$ global refinements. The initial map $f_h^0$ was chosen like in Experiment 1.
The picture on the right shows the image $f_h^m(\Gamma_h)$ at time $t \approx 2.6$. See Experiment $3$ for further details.}
\label{Fig_Ex_3}
\end{figure}

\section*{Appendix}
\begin{lem}
\label{lemma_distance_function}
Let $(\Gamma,m)$ be a smooth closed Riemannian manifold and $\M \subset \mathbb{R}^{n+1}$ an orientable smooth closed hypersurface in $\mathbb{R}^{n+1}$.
Let $\N_T$ be a tubular neighbourhood of $\M$ of fixed width such that the decomposition $x = a(x) + d(x) Dd(x)$ with $a(x) \in \M$ is unique on $\N_T$.
Furthermore, let $G$ be the extended metric defined in (\ref{extended_metric_hypersurfaces}). Finally, let $f: \Gamma \rightarrow \N_T$ be a differentiable map. Then the following identity holds 
$$
	m(grad_m f^\alpha, grad_m f^\beta) (\nabla^G_\alpha \nabla^G_\beta d) \circ f
	= m(grad_m f^\alpha, grad_m f^\beta) \, d \circ f \left( D_\alpha D_\rho d \, D_\beta D_\rho d \right) \circ f	.	
$$
\end{lem}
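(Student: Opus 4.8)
The plan is to compute the Hessian of the signed distance function $d$ with respect to the extended metric $G$ from (\ref{extended_metric_hypersurfaces}) pointwise on $\N_T$, to show that it equals $d\, D_\alpha D_\rho d\, D_\beta D_\rho d$, and then to obtain the claimed identity simply by composing with $f$ and contracting with the symmetric tensor $m(grad_m f^\alpha, grad_m f^\beta)$. Writing $\nu := Dd$ and $H := D^2 d$ as shorthand, I recall from (\ref{properties_signed_distance_function}) that $|\nu| = 1$ and $H\nu = 0$, and that the third derivatives of $d$ are fully symmetric in their three indices. Since $\nabla^G_\alpha \nabla^G_\beta d = D_\alpha D_\beta d - \Gamma(G)^\gamma_{\alpha\beta} D_\gamma d = H_{\alpha\beta} - \Gamma(G)^\gamma_{\alpha\beta}\nu_\gamma$, everything reduces to evaluating the single contraction $\Gamma(G)^\gamma_{\alpha\beta}\nu_\gamma$.

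The decisive simplification is that $\nu$ is a unit eigenvector of $G$. Indeed, $H\nu = 0$ forces $(H^2)\nu = 0$ as well, so the definition (\ref{extended_metric_hypersurfaces}) gives $G\nu = \nu$, and hence also $G^{-1}\nu = \nu$, i.e. $G^{\gamma\delta}\nu_\gamma = \nu_\delta$. This lets me avoid inverting $G$ altogether and write $\Gamma(G)^\gamma_{\alpha\beta}\nu_\gamma = \tfrac12\,\nu_\delta\big(D_\alpha G_{\delta\beta} + D_\beta G_{\delta\alpha} - D_\delta G_{\alpha\beta}\big)$. To evaluate the three contractions I will differentiate $H\nu = 0$ to obtain the key identity $\nu_\gamma D_\mu H_{\alpha\gamma} = -(H^2)_{\alpha\mu}$, and, using symmetry of the third derivatives, $\nu_\delta D_\delta H_{\alpha\beta} = -(H^2)_{\alpha\beta}$, together with $\nu_\delta(H^k)_{\delta\beta} = 0$ for $k\ge 1$.

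Differentiating $G_{\alpha\beta} = \delta_{\alpha\beta} - 2dH_{\alpha\beta} + 2d^2(H^2)_{\alpha\beta}$ and contracting with $\nu_\delta$, the terms containing a bare $\nu_\delta H_{\delta\beta}$ or $\nu_\delta(H^2)_{\delta\beta}$ drop out, and the remaining ones collapse onto powers of $H$ via the identities above. I expect to find $\nu_\delta D_\alpha G_{\delta\beta} = \nu_\delta D_\beta G_{\delta\alpha} = 2d(H^2)_{\alpha\beta} - 2d^2(H^3)_{\alpha\beta}$ and $\nu_\delta D_\delta G_{\alpha\beta} = -2H_{\alpha\beta} + 6d(H^2)_{\alpha\beta} - 4d^2(H^3)_{\alpha\beta}$. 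Assembling the three pieces, the cubic terms in $H$ cancel and I obtain $\Gamma(G)^\gamma_{\alpha\beta}\nu_\gamma = H_{\alpha\beta} - d(H^2)_{\alpha\beta}$. Subtracting this from $H_{\alpha\beta}$ cancels the $H_{\alpha\beta}$ contribution and leaves $\nabla^G_\alpha\nabla^G_\beta d = d(H^2)_{\alpha\beta} = d\, D_\alpha D_\rho d\, D_\beta D_\rho d$, where the last equality uses the symmetry of $H$. Evaluating at $f$ and contracting with $m(grad_m f^\alpha, grad_m f^\beta)$ then gives the lemma.

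The main obstacle is purely the bookkeeping of the third-derivative contractions, since a priori many terms appear; there is no conceptual difficulty once the eigenvector property $G\nu = \nu$ and the identity $\nu_\gamma D_\mu H_{\alpha\gamma} = -(H^2)_{\alpha\mu}$ are in hand, as these are exactly what force each candidate term either to vanish or to reduce to a power of $H$.
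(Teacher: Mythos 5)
Your proposal is correct and follows essentially the same route as the paper: both rest on $G\,Dd = Dd$ (hence $G^{-1}Dd = Dd$), expansion of $\Gamma(G)^\gamma_{\alpha\beta}D_\gamma d$ using the explicit form of $G$, and the identity obtained by differentiating $D_\gamma d\, D_\beta D_\gamma d = 0$, which is exactly your $\nu_\gamma D_\mu H_{\alpha\gamma} = -(H^2)_{\alpha\mu}$. The only (harmless) difference is organizational: you establish the pointwise tensor identity $\nabla^G_\alpha\nabla^G_\beta d = d\,(H^2)_{\alpha\beta}$ before contracting, whereas the paper carries the contraction with $m^{ij}\partial_i f^\alpha\partial_j f^\beta$ through the whole computation; your intermediate contractions $\nu_\delta D_\alpha G_{\delta\beta}$ and $\nu_\delta D_\delta G_{\alpha\beta}$ check out.
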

\begin{proof}
From (\ref{properties_signed_distance_function}) and (\ref{extended_metric_hypersurfaces}) it follows that
$$
	G(x) Dd(x) = Dd(x), \quad \textnormal{and hence,} \quad Dd(x) = G^{-1}(x) Dd(x) 
$$
We then compute by using (\ref{properties_signed_distance_function}) and (\ref{extended_metric_hypersurfaces}) again that
\begin{align*}
	&m^{ij} \partial_i f^\alpha \partial_j f^\beta (\nabla^G_\alpha \nabla^G_\beta d) \circ f
	= m^{ij} \partial_i f^\alpha \partial_j f^\beta \left[ D_\alpha D_\beta d - \Gamma(G)^\gamma_{\alpha \beta} D_\gamma d \right] \circ f
	\\
	&= m^{ij} \partial_i f^\alpha \partial_j f^\beta \left[ D_\alpha D_\beta d - \tfrac{1}{2} G^{\gamma \kappa} 
		\left( D_\beta G_{\alpha\kappa} + D_\alpha G_{\beta\kappa} - D_\kappa G_{\alpha\beta} \right) D_\gamma d \right] \circ f
	\\
	&= m^{ij} \partial_i f^\alpha \partial_j f^\beta \left[ D_\alpha D_\beta d - D_\gamma d \, D_\beta G_{\alpha\gamma} 
		+ \tfrac{1}{2} D_\gamma d \, D_\gamma G_{\alpha\beta} \right] \circ f	
	\\
	&=	m^{ij} \partial_i f^\alpha \partial_j f^\beta \left[ D_\alpha D_\beta d + D_\gamma d \, D_\beta \left( 2 d \, D_\alpha D_\gamma d - 2 d^2 \, D_\alpha D_\rho d \, D_\gamma D_\rho d \right)
	\right. 
	\\	
	&\left. + D_\gamma d \, D_\gamma \left( - d \, D_\alpha D_\beta d + d^2 D_\alpha D_\rho d \, D_\beta D_\rho d \right) \right] \circ f	
	\\
	&= m^{ij} \partial_i f^\alpha \partial_j f^\beta \left[ D_\alpha D_\beta d + D_\gamma d \left( 2 d \, D_\alpha D_\beta D_\gamma d 
																								- 2 d^2 D_\alpha D_\rho d \, D_\beta D_\gamma D_\rho d \right)
	\right. 
	\\	
	&\left. + D_\gamma d \left( -  D_\gamma d \, D_\alpha D_\beta d - d \, D_\alpha D_\beta D_\gamma d 
		+ 2d \, D_\gamma d \, D_\alpha D_\rho d \, D_\beta D_\rho d + 2 d^2 \, D_\gamma D_\alpha D_\rho d \, D_\beta D_\rho d \right) \right] \circ f	
	\\	
	&= m^{ij} \partial_i f^\alpha \partial_j f^\beta \left[ D_\alpha D_\beta d + 2 d \,  D_\gamma d \, D_\alpha D_\beta D_\gamma d 
																								- 2 d^2  D_\gamma d \, D_\alpha D_\rho d \, D_\beta D_\gamma D_\rho d
	\right.	
	\\	
	&\left. -  D_\alpha D_\beta d - d \, D_\gamma d \, D_\alpha D_\beta D_\gamma d 
		+ 2d \, D_\alpha D_\rho d \, D_\beta D_\rho d + 2 d^2 \, D_\gamma d \, D_\beta D_\rho d \, D_\gamma D_\alpha D_\rho d\right] \circ f		
	\\
	&= m^{ij} \partial_i f^\alpha \partial_j f^\beta \left[ d \,  D_\gamma d \, D_\alpha D_\beta D_\gamma d 
		+ 2d \, D_\alpha D_\rho d \, D_\beta D_\rho d \right] \circ f.	
\end{align*}
Differentiating $D_\gamma d \, D_\beta D_\gamma d = 0$ with respect to the $\alpha$-component gives 
$$
	D_\alpha D_\gamma d \, D_\beta D_\gamma d = - D_\gamma d \, D_\alpha D_\beta D_\gamma d,	
$$
and hence,
$$
	m^{ij} \partial_i f^\alpha \partial_j f^\beta (\nabla^G_\alpha \nabla^G_\beta d) \circ f
	= m^{ij} \partial_i f^\alpha \partial_j f^\beta \, d \circ f \left( D_\alpha D_\rho d \, D_\beta D_\rho d \right) \circ f	.	
$$
\end{proof}

\begin{lem}
\label{b_decomposition}
Let $b: (H^1(\Gamma))^2 \times (H^1(\Gamma))^2 \rightarrow \mathbb{R}$ be defined as in (\ref{b_definition}), where $f: \Gamma \rightarrow \mathbb{S}^1 \subset \mathbb{R}^2$
is a smooth harmonic map on the closed hypersurface $\Gamma \subset \mathbb{R}^{d+1}$, then
$$
	b(\psi, \psi) = \int_\Gamma |\grad \psi_\nu|^2 + |\grad \psi_\tau|^2 \, do + 2 \int_\Gamma |\grad f|^2 \psi_\nu^2 \, do
$$
for all $\psi \in (H^1(\Gamma))^2$.
\end{lem}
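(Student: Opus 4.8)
The plan is to pass to the pointwise moving frame $\{f, f^\perp\}$ of $\mathbb{R}^2$ along $f$ and reduce the quadratic form to the two scalar fields $\psi_\nu=\psi\cdot f$ and $\psi_\tau=\psi\cdot f^\perp$; the whole computation is pointwise except for a single integration by parts at the very end, which is precisely where harmonicity of $f$ is used. First I would record the frame geometry. Since $|f|^2=1$, differentiating tangentially gives $\sum_\alpha f^\alpha\grad f^\alpha=0$, so $\grad f^\alpha$ is a multiple of $f^\perp$. Introducing the tangential vector field $w:=f^1\grad f^2-f^2\grad f^1$, one checks
$$\grad f^\alpha=(f^\perp)^\alpha\,w,\qquad \grad (f^\perp)^\alpha=-f^\alpha\,w,\qquad |w|^2=|\grad f|^2,$$
where the middle identity uses $(f^\perp_1,f^\perp_2)=(-f_2,f_1)$ and the last uses $|f^\perp|=1$.

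Next I would expand the components. Writing $\psi^\alpha=\psi_\nu f^\alpha+\psi_\tau (f^\perp)^\alpha$ and inserting the frame identities yields
$$\grad\psi^\alpha=f^\alpha(\grad\psi_\nu-\psi_\tau w)+(f^\perp)^\alpha(\grad\psi_\tau+\psi_\nu w).$$
Because $\{f^\alpha\}_\alpha$ and $\{(f^\perp)^\alpha\}_\alpha$ are orthonormal in the $\alpha$-index at every point, I obtain $|\grad\psi|^2=|\grad\psi_\nu-\psi_\tau w|^2+|\grad\psi_\tau+\psi_\nu w|^2$, together with $\grad f:\grad\psi=w\cdot\grad\psi_\tau+\psi_\nu|\grad f|^2$, $f\cdot\psi=\psi_\nu$, and $|\psi|^2=\psi_\nu^2+\psi_\tau^2$. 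Substituting these into the definition (\ref{b_definition}) and collecting, the term $|\grad f|^2\psi_\tau^2$ cancels, the coefficient of $|\grad f|^2\psi_\nu^2$ adds up to $+2$, and all first-order terms in $w$ assemble into one residual, giving the pointwise identity
$$|\grad\psi|^2-4\,\grad f:\grad\psi\,(f\cdot\psi)-|\grad f|^2|\psi|^2+6|\grad f|^2(f\cdot\psi)^2=|\grad\psi_\nu|^2+|\grad\psi_\tau|^2+2|\grad f|^2\psi_\nu^2-2\,w\cdot\grad(\psi_\nu\psi_\tau).$$

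The crux is that the residual $-2\,w\cdot\grad(\psi_\nu\psi_\tau)$ integrates to zero. Since $w$ and hence $\psi_\nu\psi_\tau\,w$ are tangential, the tangential integration-by-parts formula on the closed $\Gamma$ gives $\int_\Gamma w\cdot\grad(\psi_\nu\psi_\tau)\,do=-\int_\Gamma (\grad\cdot w)\,\psi_\nu\psi_\tau\,do$. I then claim harmonicity forces $\grad\cdot w=0$: from $\grad f^\alpha=(f^\perp)^\alpha w$ and $\grad(f^\perp)^\alpha=-f^\alpha w$ one computes $\Delta_\Gamma f^\alpha=-|\grad f|^2 f^\alpha+(f^\perp)^\alpha(\grad\cdot w)$, whereas the harmonic map equation into $\mathbb{S}^1$ reads $\Delta_\Gamma f^\alpha=-|\grad f|^2 f^\alpha$; comparing and using $f^\perp\neq 0$ yields $\grad\cdot w=0$. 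Hence the residual vanishes after integration and the claimed identity follows. For general $\psi\in(H^1(\Gamma))^2$ I would establish the pointwise algebra for smooth $\psi$ and pass to the limit, noting that $\psi\mapsto(\psi_\nu,\psi_\tau)$ is bounded $H^1\to(H^1)^2$ (as $f$ is smooth) and that both sides are continuous on $(H^1)^2$ for $d\le 3$ by Sobolev embedding.

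The main obstacle is organizational rather than technical: one must recognize that the surviving first-order terms in $w$ collapse into the single divergence-type expression $-2\,w\cdot\grad(\psi_\nu\psi_\tau)$, and that this is exactly the quantity annihilated by harmonicity through $\grad\cdot w=0$. Everything else is bookkeeping in the moving frame $\{f,f^\perp\}$.
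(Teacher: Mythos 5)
Your proof is correct and follows essentially the same route as the paper's: expand $b$ in the frame $\{f,f^\perp\}$, observe that the surviving cross terms assemble into $-2\,w\cdot\grad(\psi_\nu\psi_\tau)$ with $w_\alpha=f^\perp_\beta \D{\alpha} f_\beta$, integrate by parts (no curvature term since $w$ is tangential), and kill the result using $\Delta_\Gamma f=-|\grad f|^2 f$, which is exactly the paper's computation $\D{\alpha}(f^\perp_\beta \D{\alpha} f_\beta)=f^\perp_\beta\Delta_\Gamma f_\beta=0$ rewritten as $\grad\cdot w=0$. The only difference is notational (your explicit frame identities for $\grad f^\alpha$ and $\grad (f^\perp)^\alpha$ versus the paper's inline expansion), so there is nothing further to add.
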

\begin{proof}
We plugin the decomposition $\psi = \psi_\nu f + \psi_\tau f^\perp$ into $(\ref{b_definition})$ and obtain
\begin{align*}
b(\psi, \psi) &= \int_\Gamma \D \alpha \left( \psi_\nu f_\beta + \psi_\tau f_\beta^\perp \right) \D \alpha \left( \psi_\nu f_\beta + \psi_\tau f^\perp_\beta \right)
	- 4 \D \alpha f_\beta \D \alpha \left( \psi_\nu f_\beta + \psi_\tau f_\beta^\perp \right) \psi_\nu \, do 
\\ & \quad
	- \int_\Gamma |\grad f|^2 \left( |\psi_\nu|^2 + |\psi_\tau|^2 \right) - 6 |\grad f|^2 |\psi_\nu|^2 \, do
\\ &= \int_\Gamma |\grad \psi_\nu |^2 + |\grad \psi_\tau |^2 + 2 \psi_\tau \D \alpha \psi_\nu f_\beta \D \alpha f_\beta^\perp \, do
\\ & \quad - 2 \int_\Gamma \psi_\nu \D \alpha \psi_\tau f_\beta^\perp \D \alpha f_\beta + \psi_\nu \psi_\tau \D \alpha f_\beta^\perp \D \alpha f_\beta - |\grad f|^2 |\psi_\nu|^2 \, do	.
\end{align*}
From the definition of $f^\perp$ we have $f \cdot f^\perp = 0$ and $\grad f : \grad f^\perp = 0$ as well as $f \cdot \D \alpha f^\perp = - f^\perp \cdot \D \alpha f$.
This yields
\begin{align*}
b(\psi, \psi) = \int_\Gamma |\grad \psi_\nu|^2 + |\grad \psi_\tau|^2 - 2 \left( \psi_\nu \D \alpha \psi_\tau + \D \alpha \psi_\nu \psi_\tau \right) f^\perp_\beta \D \alpha f_\beta
+ 2 |\grad f|^2 |\psi_\nu|^2 \, do.
\end{align*}
Integration by parts gives
\begin{align*}
b(\psi, \psi) &= \int_\Gamma |\grad \psi_\nu|^2 + |\grad \psi_\tau|^2 - 2 \D \alpha (\psi_\nu \psi_\tau ) f^\perp_\beta \D \alpha f_\beta
+ 2 |\grad f|^2 |\psi_\nu|^2 \, do
\\
&= \int_\Gamma |\grad \psi_\nu|^2 + |\grad \psi_\tau|^2 + 2 \psi_\nu \psi_\tau \D \alpha ( f^\perp_\beta \D \alpha f_\beta)
+ 2 |\grad f|^2 |\psi_\nu|^2 \, do.
\end{align*}
Furthermore, we have
$$
	\D \alpha ( f^\perp_\beta \D \alpha f_\beta) = \D \alpha f^\perp_\beta \D \alpha f_\beta + f^\perp_\beta \Delta_\Gamma f_\beta = f^\perp_\beta \Delta_\Gamma f_\beta.
$$
Since $f$ is supposed to be a harmonic map into the $1$-sphere, it holds that
$$
	\Delta_\Gamma f_\beta = - |\grad f|^2 f_\beta.
$$
Therefore, $\D \alpha ( f^\perp_\beta \D \alpha f_\beta) = 0$. This proves the claim.
\end{proof}

\begin{lem}
\label{lemma_smallness_of_first_variation}
Let $h_0 > 0$ be sufficiently small and $h \leq h_0$. For a smooth harmonic map $f: \Gamma \rightarrow \mathbb{S}^n \subset \mathbb{R}^{n+1}$
on the closed hypersurface $\Gamma \subset \mathbb{R}^{d+1}$, $d\leq 3$, the following estimate holds 
\begin{equation*}
	|E_h'(I_h f_l)(\psi_h)| \leq C(\Gamma,f,h_0) h \| \psi_h \|_{H^1} \quad \textnormal{for all $\psi_h \in (V_h)^{n+1}$.}
\end{equation*}
\end{lem}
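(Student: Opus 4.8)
The plan is to reduce the estimate to a consistency comparison between the discrete first variation $E_h'(I_h f_l)$ and the continuous first variation $E'(f)$, exploiting that the latter vanishes identically. First I would observe that, since $\mathbb{S}^n$ is a totally geodesic submanifold of $(\N,G)$ with $G(x)=\rho(x)\unit$ as constructed in Section~\ref{section_totally_geodesic_embeddings}, Theorem~\ref{Theorem_extended_map_Laplacian} gives $\Delta_{m,g} f = \Delta_{m,G} f$. A harmonic map satisfies $\Delta_{m,g} f = 0$, hence $\Delta_{m,G} f = 0$, and the weak form of this Euler--Lagrange equation (obtained exactly as in Lemma~\ref{lemma_weak_formulation} with vanishing time derivative) is precisely
$$
	E'(f)(\psi) = 0 \qquad \textnormal{for all } \psi \in H^{1}(\Gamma, \mathbb{R}^{n+1}).
$$
In particular $E'(f)(\psi_h^l) = 0$, so it suffices to bound $|E_h'(I_h f_l)(\psi_h) - E'(f)(\psi_h^l)|$.

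Next I would lift the $\Gamma_h$-integrals in $E_h'(I_h f_l)(\psi_h)$, see (\ref{first_variation_discrete_energy}), onto $\Gamma$. Writing $g_h := I_h f_l$ so that $g_h^l = I_h^l f$, the surface finite element calculus replaces integration over $\Gamma_h$ by integration over $\Gamma$ against a volume factor $\mu_h$ and, since the lifted gradients are tangential to $\Gamma$, replaces each gradient pairing $\gradh u : \gradh v$ by $\grad u^l : B_h \grad v^l$ for a matrix field $B_h$ assembled from $\mu_h^{-1}$, $\unit - d\mathcal{H}$ and $P_h$. By Proposition~\ref{prop_geometric_estimates} one has $\|1 - \mu_h\|_{L^\infty} \le C(\Gamma) h^2$ and (via the estimate on $P - R_h$) $\|P - B_h\|_{L^\infty} \le C(\Gamma) h^2$. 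Consequently, replacing $B_h$ by $P$ and $\mu_h$ by $1$ costs only $O(h^2)$ times the relevant $H^1$-products of the integrands, so the geometric contributions are negligible against the claimed $O(h)$ bound.

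The remaining step is to replace $g_h^l = I_h^l f$ by $f$ inside the resulting $\Gamma$-integrals. Since $f$ maps into $\mathbb{S}^n$, the interpolation estimate (\ref{interpolation_estimate_1}) with $p = \infty$ gives $\|g_h^l - f\|_{L^\infty} \le C(\Gamma) h^2 \|f\|_{W^{2,\infty}}$, so $|g_h^l|$ stays within $C h^2$ of $1$; hence for $h \le h_0$ the weights $\rho(g_h^l)$ and $|g_h^l|^{-6}$ are bounded and Lipschitz with constants depending only on $\Gamma$ and $f$, and differ from $\rho(f)$, $|f|^{-6}$ by $O(h^2)$ in $L^\infty$. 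Expanding the difference of the two first variations by the product rule, every term carries at least one factor of the type $\grad(g_h^l - f)$, $(g_h^l - f)$, or a weight difference. Estimating each by Cauchy--Schwarz together with the gradient interpolation estimate (\ref{interpolation_estimate_2}), $\|\grad(g_h^l - f)\|_{L^2} \le C(\Gamma) h \|f\|_{W^{2,2}}$, and the above $L^\infty$ bounds yields a total of order $h\,\|\psi_h^l\|_{H^1}$; the dominant contribution is exactly the gradient interpolation term, which is genuinely $O(h)$ rather than $O(h^2)$ and therefore fixes the rate. Finally the equivalence of norms in Proposition~\ref{prop_equivalence_of_norms} converts $\|\psi_h^l\|_{H^1}$ into $\|\psi_h\|_{H^1}$, giving $|E_h'(I_h f_l)(\psi_h)| \le C(\Gamma,f,h_0)\,h\,\|\psi_h\|_{H^1}$.

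I expect the main obstacle to be the simultaneous bookkeeping of the geometric transformation (the matrix $B_h$ and the volume factor) and the nonlinear weights: one must verify that all cross terms genuinely combine to $O(h)$ and that the weights stay uniformly controlled, which hinges on the $L^\infty$ closeness of $I_h^l f$ to $f$ keeping $|I_h f_l|$ bounded away from zero so that $\rho$ and $|\cdot|^{-6}$ remain smooth along the argument.
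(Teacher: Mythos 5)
Your proposal is correct and follows essentially the same route as the paper: exploit $E'(f)(\psi_h^l)=0$, lift the discrete integrals to $\Gamma$ via the matrix $R_h$ and the volume ratio $\mu_h$ with the $O(h^2)$ geometric estimates of Proposition \ref{prop_geometric_estimates}, and then trade $I_h^l f$ for $f$ using the interpolation estimates together with the local Lipschitz continuity of the weights, the gradient interpolation term supplying the dominant $O(h)$ contribution. No substantive difference from the paper's argument.
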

\begin{proof}
Since $f$ is a harmonic map, we have $E'(f)(\psi_h^l) = 0$, and hence,
\begin{align*}
 &|E_h'(I_h f_l)(\psi_h)| = |E_h'(I_h f_l)(\psi_h) - E'(f)(\psi_h^l)| 
\\ 
 &\leq |\int_{\Gamma_h} \gradh I_h f_l : \gradh \psi_h \left( \tfrac{1}{2} + \tfrac{1}{2|I_h f_l|^4} \right) \, do
 		- \int_{\Gamma} \grad f : \grad \psi_h^l \left( \tfrac{1}{2} + \tfrac{1}{2|f|^4} \right) \, do \,|
\\
 & \quad + |\int_{\Gamma_h} |\gradh I_h f_l|^2 \tfrac{I_h f_l \cdot \psi_h}{|I_h f_l|^6} \, do - \int_{\Gamma} | \grad f|^2 \tfrac{f \cdot \psi_h^l}{|f|^6} \, do \,| 		
\\
 & =: A + B. 
\end{align*}
We first recall the following result from \cite{Dz88}
\begin{equation}
	\left( \gradh \psi_h : \gradh \phi_h \right)^l = R_h^l \grad \psi_h^l : \grad \phi_h^l,
	\label{lift_Laplace_Beltrami}
\end{equation}
with $R_h = P (\unit - d \mathcal{H}) P_h (\unit - d \mathcal{H}) P$.
Using the geometric estimates from Proposition \ref{prop_geometric_estimates} and the fact that $|f| = 1$, we obtain that
\begin{align*}
A &\leq |\int_{\Gamma} R_h^l \grad I_h^l f : \grad \psi_h^l \left( \tfrac{1}{2} + \tfrac{1}{2 |I_h^l f|^4} \right) \mu_h^l \, do 
	- \int_{\Gamma} \grad f : \grad \psi_h^l \left( \tfrac{1}{2} + \tfrac{1}{2 |f|^4} \right) \, do \, |
\\
 &\leq \| P - R_h^l \|_{L^\infty} \| \grad f \|_{L^2} \| \grad \psi_h^l \|_{L^2}	 + \| \mu_h^l - 1 \|_{L^\infty} \| R_h^l \|_{L^\infty} \| \grad f \|_{L^2} \| \grad \psi_h^l \|_{L^2}
\\
 &\quad + \| R_h^l \|_{L^\infty} \| \mu_h^l \|_{L^\infty} \| \grad (I_h^l f - f) \|_{L^2} \| \grad \psi_h^l \|_{L^2}  
\\
 &\quad + \tfrac{1}{2} \| R_h^l \|_{L^\infty} \| \mu_h^l \|_{L^\infty} \| \grad I_h^l f \|_{L^2} \| \grad \psi_h^l \|_{L^2} \| 1/|I_h^l f|^4 - 1/|f|^4 \|_{L^\infty} 
\\
 &\leq C(\Gamma, f , h_0) h \| \grad \psi_h^l \|_{L^2} + C(\Gamma, f, h_0) \| \grad \psi_h^l \|_{L^2} \| f - I_h^l f \|_{L^\infty}
\leq C(\Gamma, f , h_0) h \| \grad \psi_h^l \|_{L^2},
\end{align*}
where we have also used that the map $\mathbb{R}^{n+1} \setminus \{ 0 \} \ni x \mapsto 1/|x|^4$ is locally Lipschitz -- more precisely, 
it is Lipschitz on $\mathbb{R}^{n+1} \setminus B_R(0)$ for every $R > 0$.
Note that $| 1 - |I_h^l f || = ||f| - |I_h^l f|| \leq \| f - I_h^l f \|_{L^\infty} \leq C h^2$. In particular, $I_h^l f(p)$ and $f(p)$ are contained in $\mathbb{R}^{n+1} \setminus B_R(0)$
for $R > 0$ sufficiently small.
Furthermore, we made use of $\| \grad I_h^l f \|_{L^\infty} \leq \|\grad f\|_{L^\infty} + Ch^2 \leq C(f,h_0)$.
We treat the second term in the same way,
\begin{align*}
B &= | \int_\Gamma R_h^l \grad I_h^l f : \grad I_h^l f \tfrac{I_h^l f \cdot \psi_h^l}{|I_h^l f|^6} \mu_h^l \, do - \int_{\Gamma} \grad f : \grad f \tfrac{f \cdot \psi_h^l}{|f|^6} \, do \, |
\\
&= \| P - R_h^l \|_{L^\infty} \| \grad f \|_{L^4}^2 \| \psi_h^l \|_{L^2} + \| \mu_h^l - 1 \|_{L^\infty} \| R_h^l \|_{L^\infty} \| \grad f \|_{L^4}^2 \| \psi_h^l \|_{L^2}
\\
 &\quad + \| \mu_h^l \|_{L^\infty} \| R_h^l \|_{L^\infty} ( \| \grad f \|_{L^\infty} + \| \grad I_h^l f \|_{L^\infty}) \| \grad (f - I_h^l f ) \|_{L^2} \| \psi_h^l \|_{L^2}
\\
 &\quad + \| \mu_h^l \|_{L^\infty} \| R_h^l \|_{L^\infty} \| \grad I_h^l f \|_{L^4}^2 \| I_h^l f /|I_h^l f|^6 - f/|f|^6 \|_{L^\infty} \| \psi_h^l \|_{L^2}
\\
 &\leq  C(\Gamma, f, h_0) h \| \psi_h^l \|_{L^2} + C(\Gamma, f, h_0) \| f - I_h^l f \|_{L^\infty} \| \psi_h^l \|_{L^2}
\leq  C(\Gamma, f, h_0) h \| \psi_h^l \|_{L^2} ,
\end{align*}
where we have used that $\mathbb{R}^{n+1} \setminus \{ 0 \} \ni x \mapsto x/|x|^6$ is locally Lipschitz. Using the equivalence of norms in Proposition \ref{prop_equivalence_of_norms}
the claim follows.
\end{proof}

\begin{lem}
\label{lemma_continuity_second_variation}
Let $h_0 > 0$ be sufficiently small and $h \leq h_0$. Then for a $C^2$-map $f: \Gamma \rightarrow \mathbb{S}^n \subset \mathbb{R}^{n+1}$ 
on the closed hypersurface $\Gamma \subset \mathbb{R}^{d+1}$, $d \leq 3$, it holds that
$$
	|(E''_h (I_h f_l) - E''_h(I_h f_l + \eta_h))(\psi_h, \psi_h)| \leq C(f, h_0) \theta_d(h) \| \eta_h \|_{H^1} \| \psi_h \|^2_{H^1} \sum_{k=0}^4 \theta_d(h)^k \| \eta_h \|_{H^1}^k,
$$
for all $\eta_h, \psi_h \in (V_h)^{n+1}$ with $\| \eta_h \|_{L^\infty} \leq \omega_0$ for some constant $\omega_0 = \omega_0(f,h_0) > 0$.
\end{lem}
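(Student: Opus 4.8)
The plan is to evaluate the explicit formula (\ref{second_variation_discrete_energy}) for $E''_h$ at the two arguments $g_h := I_h f_l$ and $g_h + \eta_h$, to subtract, and to estimate the resulting difference term by term. First I would record the structural observation that the integrand of $E''_h(u_h)(\psi_h,\psi_h)$ is, for each of the four summands in (\ref{second_variation_discrete_energy}), a product of three kinds of factors: a factor that is \emph{quadratic} in the gradients (built from $\gradh u_h$ and $\gradh \psi_h$), a factor that is \emph{quadratic} in the values of $\psi_h$, and a scalar or tensorial coefficient that is a smooth rational function of $u_h$ of the form $1/|u_h|^4$, $u_h/|u_h|^6$, $1/|u_h|^6$ or $(u_h\otimes u_h)/|u_h|^8$. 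In particular $\psi_h$ enters with total degree exactly two and the gradients enter with total degree at most two, which is what ultimately produces the clean $\|\psi_h\|_{H^1}^2$ and limits how many factors of $\eta_h$ can appear.

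Before estimating I would fix the geometric preliminaries. Since $f$ is smooth with $|f|\equiv 1$, the interpolation estimates (\ref{interpolation_estimate_1}) and (\ref{interpolation_estimate_2}) give $\|g_h\|_{L^\infty},\|\gradh g_h\|_{L^\infty}\le C(f,h_0)$ and $||g_h|-1|\le Ch^2$, so that $|g_h|\ge R$ for some fixed $R>0$. Choosing $\omega_0=\omega_0(f,h_0)$ small enough that $\|\eta_h\|_{L^\infty}\le\omega_0$ forces the whole segment $g_h+s\eta_h$, $s\in[0,1]$, to stay in $\{|x|\ge R/2\}$, all the rational coefficients above and all their derivatives are bounded there; consequently each coefficient $\Phi$ obeys the Lipschitz bound $\|\Phi(g_h+\eta_h)-\Phi(g_h)\|_{L^\infty}\le C\|\eta_h\|_{L^\infty}$. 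This reduces the whole problem to bounding finitely many integrals whose integrands are products of gradient factors, value factors of $\psi_h$, value and gradient factors of $\eta_h$, and factors bounded in $L^\infty$.

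The estimation of a single such integral then follows one recipe. I would keep the (at most two) gradient factors in $L^2$ by Cauchy--Schwarz; dispose of the two factors of $\psi_h$ either by keeping a $\gradh\psi_h$ in $L^2$, by the Sobolev embedding $H^1(\Gamma)\hookrightarrow L^4(\Gamma)$ (available precisely because $d\le 3$), or, when both gradient slots are already occupied, by the inverse estimate $\|\cdot\|_{L^\infty}\le C\theta_d(h)\|\cdot\|_{H^1}$ of Lemma \ref{lemma_bartels}; and dispose of the value factors of $\eta_h$ by the same inverse estimate, each such $L^\infty$ conversion costing exactly one power of $\theta_d(h)$. Throughout I would lift to $\Gamma$ and use the equivalence of norms of Proposition \ref{prop_equivalence_of_norms} together with the geometric estimates of Proposition \ref{prop_geometric_estimates} to replace $\Gamma_h$ integrals and $\gradh$ by $\Gamma$ integrals and $\grad$ up to constants. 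Counting shows that a summand of total $\eta_h$ degree $j$ is controlled by $C(f,h_0)\,\theta_d(h)^j\|\eta_h\|_{H^1}^j\,\|\psi_h\|_{H^1}^2$, and summing these contributions over $j=1,\dots,5$ (the top degree arising from a monomial such as $|\gradh\eta_h|^2(\eta_h\cdot\psi_h)^2$ multiplied by the Lipschitz increment of a rational coefficient) gives the asserted bound $C(f,h_0)\theta_d(h)\|\eta_h\|_{H^1}\|\psi_h\|_{H^1}^2\sum_{k=0}^4\theta_d(h)^k\|\eta_h\|_{H^1}^k$.

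I expect the genuine obstacle to be the contributions in which the perturbation hits the quadratic gradient factor $|\gradh u_h|^2$ of the last two summands of (\ref{second_variation_discrete_energy}), producing a term with $|\gradh\eta_h|^2$. There both gradient slots are consumed by $\gradh\eta_h$, which must stay in $L^2$, so the two factors of $\psi_h$ can no longer be paired against a gradient and are forced into $L^\infty$; the delicate point is to use the inverse estimate of Lemma \ref{lemma_bartels} on the value factors (of both $\eta_h$ and $\psi_h$) rather than a cruder $L^4$ inverse estimate on $\gradh\eta_h$, and to check that the number of $L^\infty$ conversions never exceeds the $\eta_h$ degree, so that the power of $\theta_d(h)$ produced is the one claimed and the dependence on $\psi_h$ remains the clean $\|\psi_h\|_{H^1}^2$.
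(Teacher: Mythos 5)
Your proposal is correct and follows essentially the same route as the paper: expand $E''_h$ via (\ref{second_variation_discrete_energy}) at both arguments, split the difference into the four summands, use that $\omega_0$ keeps $I_h f_l + s\eta_h$ away from the origin so the rational coefficients are locally Lipschitz, and then combine Cauchy--Schwarz with the inverse estimate of Lemma \ref{lemma_bartels} (one factor of $\theta_d(h)$ per $L^\infty$ conversion), with the top-degree term of order five in $\eta_h$ arising exactly as you describe. The only cosmetic difference is that the paper estimates directly on $\Gamma_h$ without lifting the integrals to $\Gamma$ (lifting is needed only to invoke Lemma \ref{lemma_bartels}), and it does not use the $H^1\hookrightarrow L^4$ embedding in this particular lemma.
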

\begin{proof}
Using (\ref{second_variation_discrete_energy}), we obtain that
\begin{align*}
& | (E''_h(I_h f_l) - E''_h(I_h f_l + \eta_h))(\psi_h, \psi_h) | \leq \int_{\Gamma_h} | \gradh \psi_h |^2 \Big| \tfrac{1}{2 |I_h f_l|^4} - \tfrac{1}{2 |I_h f_l + \eta_h |^4} \Big| \, do
\\
& \quad + 4 \int_{\Gamma_h} \Big| \gradh I_h f_l : \gradh \psi_h \tfrac{I_h f_l \cdot \psi_h}{|I_h f_l|^6} 
			- \gradh (I_h f_l + \eta_h) : \gradh \psi_h \tfrac{(I_h f_l + \eta_h ) \cdot \psi_h }{|I_h f_l + \eta_h |^6} \Big| \, do
\\
& \quad + \int_{\Gamma_h} \Big| |\gradh I_h f_l|^2 \tfrac{|\psi_h|^2}{|I_h f_l|^6} - |\gradh (I_h f_l + \eta_h)|^2 \tfrac{|\psi_h|^2}{|I_h f_l + \eta_h|^6} \Big| \, do		
\\
& \quad + 6 \int_{\Gamma_h} \Big| |\gradh I_h f_l|^2 \tfrac{[I_h f_l \cdot \psi_h ]^2}{|I_h f_l|^8} 
								- |\gradh (I_h f_l + \eta_h)|^2 \tfrac{[(I_h f_l + \eta_h ) \cdot \psi_h ]^2}{|I_h f_l + \eta_h |^8} \Big| \, do
\\
&=: T_1 + T_2 + T_3 + T_4.	
\end{align*}
For the first term the local Lipschitz continuity of $\mathbb{R}^{n+1} \setminus \{ 0 \} \ni x \mapsto 1/|x|^4$ gives
\begin{align*}
 T_1 &\leq \tfrac{1}{2} \| \tfrac{1}{|I_h f_l|^4} - \tfrac{1}{|I_h f_l + \eta_h |^4} \|_{L^\infty} \| \gradh \psi_h \|_{L^2}^2
 \leq C(f, h_0) \| \eta_h \|_{L^\infty} \| \gradh \psi_h \|_{L^2}^2
 \\
 &\leq C(f, h_0) \theta_d(h) \| \eta_h \|_{H^1} \| \gradh \psi_h \|_{L^2}^2, 
\end{align*}
where we have made use of Lemma \ref{lemma_bartels} and the fact that $(I_h f_l + \eta_h)(p) \in \mathbb{R}^{n+1}\setminus B_R(0)$ for some $R > 0$
if $\omega_0$ is sufficiently small. Similarly, we obtain
\begin{align*}
T_2 &\leq 4 \int_{\Gamma_h} \Big| \gradh \eta_h : \gradh \psi_h \tfrac{I_h f_l \cdot \psi_h }{|I_h f_l|^6} \Big| 
+ \Big| \gradh (I_h f_l + \eta_h) : \gradh \psi_h \left( \tfrac{I_h f_l}{ |I_h f_l|^6} - \tfrac{ I_h f_l + \eta_h}{|I_h f_l + \eta_h|^6} \right) \cdot \psi_h \Big| \, do
\\
&\leq C(f, h_0) \| \gradh \eta_h \|_{L^2} \| \gradh \psi_h \|_{L^2} \| \psi_h \|_{L^\infty}
+ C(f, h_0) \| \gradh \psi_h \|_{L^2}  \| \eta_h \|_{L^\infty} \| \psi_h \|_{L^2} 
\\
&\quad + C(f, h_0)  \| \gradh \eta_h \|_{L^2} \| \gradh \psi_h \|_{L^2}  \| \eta_h \|_{L^\infty} \| \psi_h \|_{L^\infty} 
\\
&\leq C(f, h_0) \theta_d(h) \| \gradh \eta_h \|_{L^2} \| \gradh \psi_h \|_{L^2} \| \psi_h \|_{H^1}
+ C(f, h_0) \theta_d(h) \| \eta_h \|_{H^1} \| \gradh \psi_h \|_{L^2} \| \psi_h \|_{L^2}
\\
&\quad + C(f, h_0) \theta_d(h)^2 \| \gradh \eta_h \|_{L^2} \| \gradh \psi_h \|_{L^2} \| \eta_h \|_{H^1} \| \psi_h \|_{H^1}
\\
&\leq C(f, h_0) (1 + \theta_d(h) \| \eta_h \|_{H^1}) \theta_d(h) \| \eta_h \|_{H^1} \| \psi_h \|_{H^1}^2.
\end{align*}
For the third term a Lipschitz argument together with Lemma \ref{lemma_bartels} leads to
\begin{align*}
T_3 &= C(f,h_0) \int_{\Gamma_h} \Big| |\gradh I_h f_l|^2 - |\gradh (I_h f_l + \eta_h)|^2 \Big| |\psi_h|^2 \, do
\\
&\quad + \int_{\Gamma_h} | \gradh (I_h f_l + \eta_h)|^2 \Big| \tfrac{1}{|I_h f_l|^6} - \tfrac{1}{|I_h f_l + \eta_h|^6} \Big| |\psi_h|^2 \, do
\\
&\leq C(f,h_0) \int_{\Gamma_h} (1 + | \gradh \eta_h |) | \gradh \eta_h | |\psi_h|^2 \, do
	+ C(f,h_0) \int_{\Gamma_h} (1 + | \gradh \eta_h |^2) | \eta_h | |\psi_h|^2 \, do
\\
&\leq C(f,h_0) \left(
	\| \gradh \eta_h \|_{L^2} \| \psi_h \|_{L^2} \| \psi_h \|_{L^\infty}
	+ \| \gradh \eta_h \|_{L^2}^2 \| \psi_h \|_{L^\infty}^2
	+ \| \eta_h \|_{L^\infty} \| \psi_h \|_{L^2}^2 \right.
\\
&\qquad \qquad \qquad	
\left.
	+ \| \gradh \eta_h \|_{L^2}^2 \| \eta_h \|_{L^\infty} \| \psi_h \|_{L^\infty}^2
\right)
\\
&\leq C(f,h_0) \left( \theta_d(h) \| \grad \eta_h \|_{L^2} \| \psi_h \|_{L^2} \| \psi_h \|_{H^1}
	+ \theta_d(h)^2 \| \grad \eta_h \|_{L^2}^2 \| \psi_h \|_{H^1}^2 \right.
\\
&\qquad \qquad \qquad	
\left.
	+ \theta_d(h) \| \eta_h \|_{H^1} \| \psi_h \|_{L^2}^2
	+ \theta_d(h)^3 \| \eta_h \|_{H^1}^3 \| \psi_h \|_{H^1}^2
\right).
\end{align*}
Finally, the local Lipschitz continuity of $\mathbb{R}^{n+1} \setminus \{ 0 \} \ni x \mapsto 1/|x|^8$ and Lemma \ref{lemma_bartels} gives
\begin{align*}
T_4 &\leq C(f,h_0) \int_{\Gamma_h} \Big| |\gradh I_h f_l|^2 - |\gradh (I_h f_l + \eta_h)|^2 \Big| |\psi_h|^2 \, do
\\
&\quad + \int_{\Gamma_h} |\gradh (I_h f_l + \eta_h)|^2 \Big| \tfrac{[I_h f_l \cdot \psi_h]^2}{|I_h f_l|^8} 
							- \tfrac{[I_h f_l \cdot \psi_h + \eta_h \cdot \psi_h]^2}{|I_h f_l + \eta_h|^8} \Big| \, do
\\
&\leq C(f,h_0) \left( \int_{\Gamma_h} (1 + |\gradh \eta_h|) |\gradh \eta_h| |\psi_h|^2 + (1 + |\gradh \eta_h|^2) |\eta_h| [I_h f_l \cdot \psi_h + \eta_h \cdot \psi_h ]^2 \, do
\right.
\\
&\quad 
\left.
+ \int_{\Gamma_h} (1 + |\gradh \eta_h|^2) |[I_h f_l \cdot \psi_h ]^2 - [I_h f_l \cdot \psi_h + \eta_h \cdot \psi_h]^2| \, do							
\right)
\\
&\leq C(f,h_0) \left( \| \gradh \eta_h \|_{L^2} \| \psi_h \|_{L^2} \| \psi_h \|_{L^\infty} + \| \gradh \eta_h \|_{L^2}^2 \| \psi_h \|_{L^\infty}^2
\right)
\\
&\quad
+ C(f,h_0) \int_{\Gamma_h} (1 + |\gradh \eta_h|^2) |\eta_h| |\psi_h|^2 (1 + |\eta_h| + |\eta_h|^2) \, do
\\
&\leq C(f,h_0) \left( \theta_d(h) \| \gradh \eta_h \|_{L^2} \| \psi_h \|_{L^2} \| \psi_h \|_{H^1} + \theta_d(h)^2 \| \gradh \eta_h \|_{L^2}^2 \| \psi_h \|_{H^1}^2
\right)
\\
&\quad
+ C(f,h_0) 
( \| \psi_h \|_{L^2}^2 + \theta_d(h)^2 \| \gradh \eta_h \|_{L^2}^2  \| \psi_h \|_{H^1}^2 )
\left(
	\sum_{k=1}^3 \theta_d(h)^k \| \eta_h \|_{H^1}^k
\right).
\end{align*}
\end{proof}

\begin{lem}
\label{lemma_consistency_second_variation}
Let $h_0 > 0$ be sufficiently small and $h \leq h_0$. Then for a $C^2$-map $f: \Gamma \rightarrow \mathbb{S}^n \subset \mathbb{R}^{n+1}$
on the closed hypersurface $\Gamma \subset \mathbb{R}^{d+1}$, $d \leq 3$, the estimate
\begin{align*}
	|E''(f)(\psi_h^l, \psi_h^l) - E''_h(I_h f_l)(\psi_h, \psi_h)| \leq C(\Gamma,f,h_0) h \| \psi_h \|_{H^1}^2
\end{align*}
holds for all $\psi_h \in (V_h)^{n+1}$.
\end{lem}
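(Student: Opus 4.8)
The plan is to mimic the term-by-term comparison used in the proof of Lemma \ref{lemma_smallness_of_first_variation}, now applied to the four integrals making up the second variation. Writing out $E''_h(I_h f_l)(\psi_h,\psi_h)$ via \eqref{second_variation_discrete_energy} and $E''(f)(\psi_h^l,\psi_h^l)$ via \eqref{second_variation_continuous_case}, I would split the difference into four pieces $T_1,\dots,T_4$, one for each of the summands $\int|\gradh\psi_h|^2(\tfrac12+\tfrac1{2|\cdot|^4})$, $\int\gradh f_h:\gradh\psi_h\,\tfrac{f_h\cdot\psi_h}{|f_h|^6}$, $\int|\gradh f_h|^2\tfrac{|\psi_h|^2}{|f_h|^6}$ and $\int|\gradh f_h|^2\tfrac{[f_h\cdot\psi_h]^2}{|f_h|^8}$, and estimate each separately. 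In every piece the first step is to lift the integral over $\Gamma_h$ to an integral over $\Gamma$ using the volume ratio $\mu_h$ and the identity \eqref{lift_Laplace_Beltrami}, which replaces each pairing $\gradh(\cdot):\gradh(\cdot)$ on $\Gamma_h$ by $R_h^l\,\grad(\cdot)^l:\grad(\cdot)^l$ on $\Gamma$.

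After lifting, each $T_j$ is a difference of two integrals over $\Gamma$ with the same structure, so the error decomposes into three independent sources, exactly as in the $A$- and $B$-estimates of Lemma \ref{lemma_smallness_of_first_variation}: (i) the geometric discrepancy $\|P-R_h^l\|_{L^\infty}+\|1-\mu_h^l\|_{L^\infty}\le C(\Gamma)h^2$ from Proposition \ref{prop_geometric_estimates}; (ii) the gradient interpolation error, controlled by $\|\grad(I_h^l f-f)\|_{L^2}\le C(\Gamma)h\,\|f\|_{W^{2,2}}$ from \eqref{interpolation_estimate_2}; and (iii) the error in the nonlinear weights $1/|I_h^l f|^k$ for $k=4,6,8$. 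Here I would use that $f$ maps into $\mathbb{S}^n$, so $|f|=1$, together with $\|I_h^l f-f\|_{L^\infty}\le C(\Gamma)h^2$ from \eqref{interpolation_estimate_1}; this keeps $|I_h^l f|\ge 1-C h^2\ge\tfrac12$ for $h$ small, so that $I_h^l f$ and $f$ lie in a fixed region $\mathbb{R}^{n+1}\setminus B_R(0)$ on which $x\mapsto 1/|x|^k$ is Lipschitz, yielding $\|1/|I_h^l f|^k-1\|_{L^\infty}\le C h^2$. The smoothness of $f$ provides $\|\grad f\|_{L^\infty}\le C(f)$ and, via \eqref{interpolation_estimate_2} with $p=\infty$, $\|\grad I_h^l f\|_{L^\infty}\le C(f,h_0)$, so that in every term the $f$-gradients can be placed in $L^\infty$ while the two $\psi_h$-factors are taken in $L^2$ (and, for the quadratic weight differences in $T_3,T_4$, in $L^4$, using the Sobolev embedding $H^1(\Gamma)\hookrightarrow L^4(\Gamma)$, which is available precisely because $d\le 3$). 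Each resulting product is then bounded by $\|\psi_h^l\|_{H^1}^2$ times one of the factors $h$ or $h^2$ listed above.

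Collecting the three sources, the dominant one is the first-order gradient interpolation error $O(h)$, while the geometric and $L^\infty$-interpolation contributions are $O(h^2)$; hence each $T_j\le C(\Gamma,f,h_0)h\,\|\psi_h^l\|_{H^1}^2$, and a final application of the equivalence of norms in Proposition \ref{prop_equivalence_of_norms} replaces $\|\psi_h^l\|_{H^1}$ by $\|\psi_h\|_{H^1}$ to give the claim. I expect the only real obstacle to be the bookkeeping: each of $T_2,T_3,T_4$ splits into several sub-terms (one per error source and per factor being perturbed), and one must verify that the H\"older exponents always close up — in particular that the quadratic appearance of $\psi_h$ never forces a norm stronger than $L^4$, which is exactly where the restriction $d\le 3$ enters. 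No analytic idea beyond those already used for the first variation should be required.
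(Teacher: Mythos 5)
Your proposal is correct and follows essentially the same route as the paper: the same four-term decomposition, lifting via $R_h^l$ and $\mu_h^l$, the geometric estimates of Proposition \ref{prop_geometric_estimates}, the interpolation estimates, and local Lipschitz continuity of the weights $1/|x|^k$ near $|x|=1$. The only (harmless) deviation is that you pair the $L^2$ gradient interpolation estimate with $\|\psi_h\|_{L^4}$ via Sobolev embedding, whereas the paper uses the $L^\infty$ version of \eqref{interpolation_estimate_2} so that both $\psi_h$-factors stay in $L^2$ and no embedding is needed.
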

\begin{proof}
We insert (\ref{second_variation_discrete_energy}) and (\ref{second_variation_continuous_case}) into
\begin{align*}
	|E''(f)(\psi_h^l, \psi_h^l) - E''_h(I_h f_l)(\psi_h, \psi_h)| \leq I_1 + 4 I_2 + I_3 + 6 I_4,
\end{align*}
where the terms $I_1, I_2, I_3$ and $I_4$ are defined below. Using (\ref{lift_Laplace_Beltrami}), the geometric estimates from Proposition \ref{prop_geometric_estimates}
and the fact that $\mathbb{R}^{n+1} \setminus \{ 0 \} \ni x \mapsto 1/|x|^4$ is locally Lipschitz -- recall that $|f| = 1$ and $|1 - |I_h f_l|| \leq Ch^2$, we obtain for the first term
\begin{align*}
I_1 &:= \Big| \int_\Gamma |\grad \psi_h^l|^2 \left( \tfrac{1}{2} + \tfrac{1}{2|f|^4} \right) \, do 
		- \int_{\Gamma_h} |\gradh \psi_h|^2 \left( \tfrac{1}{2} + \tfrac{1}{2|I_h f_l|^4} \right) \, do \Big|
\\
	&= 	\Big| \int_\Gamma |\grad \psi_h^l|^2 \left( \tfrac{1}{2} + \tfrac{1}{2|f|^4} \right) \, do 
		- \int_\Gamma R_h^l \grad \psi_h^l : \grad \psi_h^l \left( \tfrac{1}{2} + \tfrac{1}{2|I_h^l f|^4} \right) \mu_h^l \, do \Big|	
\\
	&\leq \Big| \int_\Gamma (P - R_h^l) \grad \psi_h^l : \grad \psi_h^l \left( \tfrac{1}{2} + \tfrac{1}{2|f|^4} \right) \, do \Big|		
	+ \Big| \int_\Gamma R_h^l \grad \psi_h^l : \grad \psi_h^l \left( \tfrac{1}{2} + \tfrac{1}{2|f|^4} \right) (1 - \mu_h^l) \, do \Big|
\\
	&\quad + \Big| \int_\Gamma R_h^l \grad \psi_h^l : \grad \psi_h^l \left( \tfrac{1}{2|f|^4} - \tfrac{1}{2|I_h^l f|^4} \right) \mu_h^l \, do \Big|
\\
	&\leq \| P - R_h^l \|_{L^\infty} \| \grad \psi_h^l \|_{L^2}^2	 + \| R_h^l \|_{L^\infty} \| 1 - \mu_h^l \|_{L^\infty} \| \grad \psi_h^l \|_{L^2}^2
\\	
	&\quad + C(f,h_0) \| R_h^l \|_{L^\infty} \| \mu_h^l \|_{L^\infty} \| \grad \psi_h^l \|_{L^2}^2 \| f - I_h^l f \|_{L^\infty}
\\
	&\leq C(\Gamma,f,h_0) h^2 \| \grad \psi_h^l \|_{L^2}^2.	
\end{align*}
Similarly, we conclude that 
\begin{align*}
I_2 &:= \Big| \int_\Gamma \grad f : \grad \psi_h^l \tfrac{f \cdot \psi_h^l}{|f|^6} \, do - \int_{\Gamma_h} \gradh I_h f_l : \gradh \psi_h \tfrac{I_h f_l \cdot \psi_h}{|I_h f_l|^6} \, do \Big|
\\
	&= \Big| \int_\Gamma \grad f : \grad \psi_h^l \tfrac{f \cdot \psi_h^l}{|f|^6} \, do - \int_\Gamma R_h^l \grad I_h^l f : \grad \psi_h^l \tfrac{I_h^l f \cdot \psi_h^l}{|I_h^l f|^6} \mu_h^l \, do \Big|
\\
	&= \Big| \int_\Gamma (P - R_h^l) \grad f : \grad \psi_h^l \tfrac{f \cdot \psi_h^l}{|f|^6} \, do \Big|	
	+ \Big| \int_\Gamma R_h^l \grad f : \grad \psi_h^l \tfrac{f \cdot \psi_h^l}{|f|^6} (1 - \mu_h^l) \, do \Big|	
\\
	&\quad + \Big| \int_\Gamma R_h^l \grad (f - I_h^l f) : \grad \psi_h^l \tfrac{f \cdot \psi_h^l}{|f|^6} \mu_h^l \, do \Big|		
	+ \Big| \int_\Gamma R_h^l \grad I_h^l f : \grad \psi_h^l \left( \tfrac{f}{|f|^6} - \tfrac{I_h^l f}{|I_h^l f|^6} \right) \cdot \psi_h^l \mu_h^l \, do \Big|	
\end{align*}
\begin{align*} 
I_2	&\leq \| P - R_h^l \|_{L^\infty} \| \grad f \|_{L^\infty}	 \| \grad \psi_h^l \|_{L^2} \| \psi_h^l \|_{L^2}
	+ \| R_h^l \|_{L^\infty} \| 1 - \mu_h^l \|_{L^\infty} \| \grad f \|_{L^\infty} \| \grad \psi_h^l \|_{L^2} \| \psi_h^l \|_{L^2}
\\
	&\quad + \| R_h^l \|_{L^\infty}	\| \mu_h^l \|_{L^\infty} \| \grad (f - I_h^l f) \|_{L^\infty} \| \grad \psi_h^l \|_{L^2} \| \psi_h^l \|_{L^2}
\\
	&\quad + C(f,h_0) \| R_h^l \|_{L^\infty}	\| \mu_h^l \|_{L^\infty} \| \grad I_h^l f \|_{L^\infty} \| \grad \psi_h^l \|_{L^2} \| \psi_h^l \|_{L^2} \| f - I_h^l f \|_{L^\infty}
\\
	&\leq C(\Gamma, f, h_0) h \| \grad \psi_h^l \|_{L^2} \| \psi_h^l \|_{L^2}	,
\end{align*}
and that
\begin{align*}
I_3 &:= \Big| \int_\Gamma |\grad f|^2 \tfrac{|\psi_h^l |^2}{|f|^6} \, do - \int_{\Gamma_h} |\gradh I_h f_l|^2 \tfrac{|\psi_h|^2}{|I_h f_l|^6} \, do \Big|
\\
&= \Big| \int_\Gamma |\grad f|^2 \tfrac{|\psi_h^l |^2}{|f|^6} \, do - \int_\Gamma R_h^l \grad I_h^l f : \grad I_h^l f \tfrac{|\psi_h^l|^2}{|I_h^l f|^6} \mu_h^l \, do \Big|
\\
&\leq \Big| \int_\Gamma (P - R_h^l) \grad f : \grad f \tfrac{|\psi_h^l|^2}{|f|^6} \, do \Big|
+ \Big| \int_\Gamma R_h^l \grad f : \grad f \tfrac{|\psi_h^l|^2}{|f|^6} (1 - \mu_h^l) \, do \Big|
\\
&\quad + \Big| \int_\Gamma R_h^l \grad (f - I_h^l f) : \grad f \tfrac{|\psi_h^l|^2}{|f|^6} \mu_h^l \, do \Big|
+ \Big| \int_\Gamma R_h^l \grad I_h^l f : \grad (f - I_h^l f) \tfrac{|\psi_h^l|^2}{|f|^6} \mu_h^l \, do \Big|
\\
&\quad + \Big| \int_\Gamma R_h^l \grad I_h^l f : \grad I_h^l f \left( \tfrac{1}{|f|^6} - \tfrac{1}{|I_h^l f|^6} \right) |\psi_h^l|^2 \mu_h^l \, do \Big|
\\
&\leq \| P - R_h^l \|_{L^\infty} \| \grad f \|_{L^\infty}^2 \| \psi_h^l \|_{L^2}^2
+ \| R_h^l \|_{L^\infty} \| 1 - \mu_h^l \|_{L^\infty} \| \grad f \|_{L^\infty}^2 \| \psi_h^l \|_{L^2}^2
\\
&\quad + \| R_h^l \|_{L^\infty} \| \mu_h^l \|_{L^\infty} \| \grad (f - I_h^l f) \|_{L^\infty} ( \| \grad f \|_{L^\infty} + \| \grad I_h^l f \|_{L^\infty}) \| \psi_h^l \|_{L^2}^2
\\
&\quad + C(f,h_0) \| R_h^l \|_{L^\infty} \| \mu_h^l \|_{L^\infty} \| \grad I_h^l f \|_{L^\infty}^2  \| f - I_h^l f \|_{L^\infty} \| \psi_h^l \|_{L^2}^2
\\
&\leq C(\Gamma,f,h_0) h \| \psi_h^l \|_{L^2}^2.
\end{align*}
Finally, we obtain
\begin{align*}
I_4 &:= \Big| \int_\Gamma |\grad f|^2 \tfrac{[f \cdot \psi_h^l]^2}{|f|^8} \, do - \int_{\Gamma_h} |\gradh I_h f_l|^2 \tfrac{[I_h f_l \cdot \psi_h]^2}{|I_h f_l|^8} \, do \Big|
\\
&= \Big| \int_\Gamma |\grad f|^2 \tfrac{[f \cdot \psi_h^l]^2}{|f|^8} \, do 
	- \int_{\Gamma_h} R_h^l \grad I_h^l f : \grad I_h^l f \tfrac{[I_h^l f \cdot \psi_h^l]^2}{|I_h^l f|^8} \mu_h^l \, do \Big|
\\
&= \Big| \int_\Gamma (P - R_h^l) \grad f : \grad f \tfrac{[f \cdot \psi_h^l]^2}{|f|^8} \, do \Big|	
+ \Big| \int_\Gamma R_h^l \grad f : \grad f \tfrac{[f \cdot \psi_h^l ]^2}{|f|^8} (1 - \mu_h^l) \, do \Big|
\\
&\quad + \Big| \int_\Gamma R_h^l \grad (f - I_h^l f) : \grad f \tfrac{[f \cdot \psi_h^l]^2}{|f|^8}  \mu_h^l \, do \Big|
+ \Big| \int_\Gamma R_h^l \grad I_h^l f : \grad (f - I_h^l f) \tfrac{[f \cdot \psi_h^l]^2}{|f|^8} \mu_h^l \, do \Big|
\\
&\quad + \Big| \int_\Gamma R_h^l \grad I_h^l f : \grad I_h^l f  \left( \tfrac{[f \cdot \psi_h^l]^2}{|f|^8} - \tfrac{[I_h^l f \cdot \psi_h^l]^2}{|I_h^l f|^8} \right) \mu_h^l \, do \Big|
\\
&\leq \| P - R_h^l \|_{L^\infty} \| \grad f \|_{L^\infty}^2 \| \psi_h^l \|_{L^2}^2
+ \| R_h^l \|_{L^\infty} \| 1 - \mu_h^l \|_{L^\infty} \| \grad f \|_{L^\infty}^2 \| \psi_h^l \|_{L^2}^2
\\
&\quad + \| R_h^l \|_{L^\infty} \| \mu_h^l \|_{L^\infty} \| \grad (f - I_h^l f) \|_{L^\infty} (\| \grad f \|_{L^\infty} + \| \grad I_h^l f \|_{L^\infty}) \| \psi_h^l \|_{L^2}^2
\\
&\quad + \| R_h^l \|_{L^\infty} \| \mu_h^l \|_{L^\infty} \| \grad I_h^l f \|_{L^\infty}^2
\int_\Gamma \Big| \tfrac{[f \cdot \psi_h^l]^2}{|f|^8} - \tfrac{[I_h^l f \cdot \psi_h^l]^2}{|I_h^l f|^8} \Big| \, do
\\
&\leq C(\Gamma, f, h_0) \left( h \| \psi_h^l \|_{L^2}^2 + \int_\Gamma \Big| \tfrac{(f - I_h^l f) \cdot \psi_h^l (\psi_h^l \cdot f)}{|f|^8} \Big| \, do
+ \int_\Gamma \Big| \tfrac{(I_h^l f \cdot \psi_h^l) \psi_h^l \cdot (f - I_h^l f)}{|f|^8} \Big| \, do
\right.
\\
&\quad 
\left.
+ \int_\Gamma \Big| [I_h^l f \cdot \psi_h^l]^2 \left( \tfrac{1}{|f|^8} - \tfrac{1}{|I_h^l f|^8} \right) \Big| \, do
\right)
\\
&\leq C(\Gamma, f, h_0) \left( h + \| f - I_h^l f \|_{L^\infty} \right) \| \psi_h^l \|_{L^2}^2
\leq C(\Gamma, f, h_0) h \| \psi_h^l \|_{L^2}^2.
\end{align*}
It follows that 
\begin{align*}
|E''(f)(\psi_h^l, \psi_h^l) - E''_h(I_h f_l)(\psi_h, \psi_h)| &\leq C(\Gamma, f, h_0) h \left( \| \grad \psi_h^l \|_{L^2}^2 +
\| \grad \psi_h^l \|_{L^2} \| \psi_h^l \|_{L^2} + \| \psi_h^l \|_{L^2}^2 \right)
\\
&\leq C(\Gamma, f, h_0) h \left( \| \grad \psi_h^l \|_{L^2}^2 + \| \psi_h^l \|_{L^2}^2 \right) 
\\
&= C(\Gamma, f, h_0) h \| \psi_h^l \|_{H^1}^2.
\end{align*}
The equivalence of norms in Proposition \ref{prop_equivalence_of_norms} gives the result.
\end{proof}

\subsection*{Acknowledgement}
We thank Gerhard Dziuk and Jan Steinhilber to call our attention to the problem of computing discrete harmonic maps.
We thank Harald Garcke for discussions.

\end{document}